\newcommand{\1}{ \mathds{1}}
\newcommand{\Z}{\mathbb{Z}}
\newcommand{\C}{\mathbb{C}}
\newcommand{\R}{\mathbb{R}}
\newcommand{\g}{\mathfrak{g}}
\newcommand{\s}{\mathfrak{s}}
\newcommand{\h}{\mathfrak{h}}
\newcommand{\F}{\mathbb{F}}
\newcommand{\Sym}{\mathrm{Sym}}
\newcommand{\Alt}{\mathrm{Alt}}
\newcommand{\Imm}{\mathrm{Im}}
\newcommand{\Ker}{\mathrm{Ker}}
\newcommand{\End}{\mathrm{End}}
\newcommand{\Aut}{\mathrm{Aut}}
\newcommand{\Inn}{\mathrm{Inn}}
\newcommand{\orb}{\mathrm{orb}}
\newcommand{\Hom}{\mathrm{Hom}}
\newcommand{\Out}{\mathrm{Out}}
\newcommand{\Irr}{\mathrm{Irr}}
\makeatletter \@addtoreset{equation}{section}
\theoremstyle{plain}
\newtheorem{theorem}{Theorem}[section]
\newtheorem{proposition}[theorem]{Proposition}
\newtheorem{lemma}[theorem]{Lemma}
\newtheorem{corollary}[theorem]{Corollary}
\theoremstyle{definition}
\theoremstyle{remark}
\newtheorem{remark}[theorem]{Remark}
\numberwithin{equation}{section}
\title[Automorphism groups of the holomorphic VOAs]{Automorphism groups of the holomorphic vertex operator algebras associated with Niemeier lattices and the $-1$-isometries}
 \subjclass[2010]{Primary  17B69, Secondary 20B25}
 \keywords{Holomorphic vertex operator algebra, Automorphism group, Niemeier lattice}
\author[H. Shimakura]{Hiroki Shimakura}%
\address[H. Shimakura]{Graduate School of Information Sciences,
Tohoku University,
Sendai 980-8579, Japan }%
\email {shimakura@tohoku.ac.jp}%
\date{}
\thanks{H.\ Shimakura was partially supported by JSPS KAKENHI Grant Number JP17K05154.}
\begin{document}

\begin{abstract} In this article, we determine the automorphism groups of $14$ holomorphic vertex operator algebras of central charge $24$ obtained by applying the $\Z_2$-orbifold construction to the Niemeier lattice vertex operator algebras and lifts of the $-1$-isometries.

\end{abstract}
\maketitle


\section{Introduction}
Recently, (strongly regular) holomorphic vertex operator algebras (VOAs) of central charge $24$ with non-zero weight one spaces are classified; there exist exactly $70$ such VOAs (up to isomorphism) and they are uniquely determined by the Lie algebra structures on the weight one spaces.
The remaining case is the famous conjecture in \cite{FLM}: a (strongly regular) holomorphic VOA of central charge $24$ is isomorphic to the moonshine VOA if the weight one space is zero. 

The determination of the automorphism groups of vertex operator algebras is one of fundamental problems in VOA theory; it is natural to ask what the automorphism groups of holomorphic VOAs of central charge 24 are.
For example, the automorphism group of the moonshine VOA is the Monster (\cite{FLM}) and those of Niemeier lattice VOAs were determined in \cite{DN}.
However, the other cases have not been determined yet.

The purpose of this article is to determine the automorphism group of the holomorphic VOA $V_N^{\orb({\theta})}$ of central charge $24$ obtained in \cite{DGM} by applying the $\Z_2$-orbifold construction to the lattice VOA $V_N$ associated with a Niemeier lattice $N$ and a lift $\theta$ of the $-1$-isometry of $N$.
Since $V_N^{\orb(\theta)}$ is isomorphic to some Niemeier lattice VOA for $9$ cases and $V_\Lambda^{\orb(\theta)}$ is the moonshine VOA for the Leech lattice $\Lambda$, we focus on the other $14$ cases.
Our main theorem is as follows:

\begin{theorem}\label{T:main} Let $N$ be a Niemeier lattice whose root sublattice is $A_2^{12}$, $A_3^8$, $A_4^6$, $A_5^4D_4$, $A_6^4$, $A_7^2D_5^2$, $A_8^3$, $A_9^2D_6$, $E_6^4$, $A_{11}D_7E_6$, $A_{12}^2$, $A_{15}D_9$, $A_{17}E_7$ or $A_{24}$. 
For the holomorphic VOA $V=V_N^{{\rm orb}(\theta)}$ of central charge $24$, the groups $K(V)$, $\Out_1(V)$ and $\Out_2(V)$ are given as in Table \ref{T:Q}.
\end{theorem}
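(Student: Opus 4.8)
The plan is to reduce everything to two kinds of data: the weight one Lie algebra $\mathfrak g:=V_1$, whose isomorphism type in each of the $14$ cases is recorded in \cite{DGM}, and the lattice $N$ together with the automorphism group $\Aut(V_N)$, determined in \cite{DN}. Since $V$ is strongly regular holomorphic of central charge $24$ with $V_1\neq 0$, the Lie algebra $\mathfrak g$ is reductive, the inner automorphism group $K(V)=\langle\exp(a_{(0)})\mid a\in V_1\rangle$ is a connected reductive algebraic group normal in $\Aut(V)$, and the conjugation action yields $\rho\colon\Aut(V)\to\Aut(\mathfrak g)$ with $\rho(K(V))=\Inn(\mathfrak g)$. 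With $\Out(V)=\Aut(V)/K(V)$ one has $1\to\Out_1(V)\to\Out(V)\to\Out_2(V)\to 1$, where $\Out_2(V)\le\Out(\mathfrak g)$ is the group of diagram automorphisms of $\mathfrak g$ induced by $\Aut(V)$ and $\Out_1(V)\cong\Ker\rho/(\Ker\rho\cap K(V))$. So the theorem amounts to computing $K(V)$, $\Ker\rho$ and $\Out_2(V)$ for each of the $14$ lattices, together with the relevant extension data.

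First, to compute $K(V)$: with $\tilde G$ the simply connected group having Lie algebra $\mathfrak g$, one has $K(V)\cong\tilde G/Z_0$, where $Z_0\le Z(\tilde G)$ is the joint kernel of the central characters of the $\langle V_1\rangle$-submodules occurring in $V$. I would read those characters off from the orbifold construction, writing $V$ as the direct sum of the $\theta$-fixed subspace of $V_N$ and the $\theta$-fixed part of the irreducible $\theta$-twisted $V_N$-module and computing the $\langle V_1\rangle$-module structure of each summand; here the twisted part contributes nothing in small enough weight to affect $Z_0$, since the $-1$-twisted module of a rank $24$ lattice VOA has minimal weight $3/2$. Note too that $V$ carries the canonical order $2$ quantum symmetry $\sigma$, acting as $+1$ on $V_N^{\theta}\subseteq V$ and as $-1$ on its complement, with $V^{\langle\sigma\rangle}\cong V_N^{\theta}$ and $V^{\orb(\sigma)}\cong V_N$; since $V_1$ lies entirely inside $V_N^{\theta}$ in these cases, $\sigma$ acts trivially on $V_1$ and hence contributes a class to $\Out_1(V)$.

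Next, for a lower bound on $\Aut(V)$: every $\varphi\in O(N)$ commutes with $-1$, so a suitably chosen lift of $\varphi$ to $\Aut(V_N)$ normalizes $\langle\theta\rangle$ and descends to $\Aut(V_N^{\orb(\theta)})=\Aut(V)$; the resulting elements realize diagram automorphisms of $\mathfrak g$ governed by $O(N)/W(N)$ acting on the Dynkin diagram of the root system of $N$, and they also contribute to $\Ker\rho$ via the dual of the glue code $N/Q(N)$ ($Q(N)$ the root sublattice), as modified by the orbifold. Together with $\sigma$ and with $K(V)$, these generate an explicit subgroup of $\Aut(V)$, whose $K(V)$, $\Out_1(V)$ and $\Out_2(V)$ I would compute and then match against Table~\ref{T:Q}.

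The main obstacle is the matching upper bound: that $\Aut(V)$ is no larger than this explicit subgroup. For $\Ker\rho$, an automorphism acting trivially on $V_1$ preserves every homogeneous subspace and commutes with the action of the subVOA $\langle V_1\rangle$, hence acts on each $\langle V_1\rangle$-isotypic component of $V$ through $\mathrm{GL}$ of its multiplicity space; imposing compatibility with the vertex operators cuts this down to the expected finite abelian group. For $\Out_2(V)$, a diagram automorphism realized on $V$ must be compatible with the lattice and module data, and here the reverse orbifold is the tool: after conjugating by $K(V)$ so that a given automorphism may be assumed to commute with $\sigma$, it restricts to $V_N^{\theta}$ and is therefore induced by an element of $C_{\Aut(V_N)}(\theta)$, computable from \cite{DN} and from the isometries of $N$; the reduction to automorphisms commuting with $\sigma$ uses that $\sigma$ is pinned down up to conjugacy among automorphisms of its type, or that all $K(V)$-translates of $\sigma$ are already conjugate in $\Aut(V)$. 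Carrying these computations out for each of the $14$ Niemeier lattices, and carefully recording the resulting (frequently non-split) extensions relating $\Out_1(V)$ and $\Out_2(V)$, yields Table~\ref{T:Q}. I expect the case-by-case verification of the upper bound, where $\mathfrak g$ and the glue code of $N$ vary considerably, together with the bookkeeping needed to separate $K(V)$ from $\Ker\rho$, to be the most laborious part.
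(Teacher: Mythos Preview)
You have misidentified the three invariants being computed. In this paper $K(V)$ is \emph{not} the inner automorphism group $\Inn(V)=\langle\exp(a_{(0)})\mid a\in V_1\rangle$; it is defined in \eqref{Def:KO} as the kernel of the restriction $\varphi_1\colon\Aut(V)\to\Aut(V_1)$, i.e.\ the automorphisms acting trivially on $V_1$. This is a finite abelian group (e.g.\ $\Z_2$ when $Q=A_2^{12}$), not a connected group $\tilde G/Z_0$. Likewise $\Out(V)$ is the \emph{image} of $\Aut(V)$ in $\Out(\mathfrak g)$, and the decomposition in \eqref{Def:O12} separates diagram automorphisms of the individual simple ideals ($\Out_1$) from permutations of the simple ideals ($\Out_2$); your $\Out_1(V)\cong\Ker\rho/(\Ker\rho\cap\Inn(V))$ is a different object altogether. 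Since the paper shows $K(V)\subset\Inn(V)$ (Proposition~\ref{P:K}, Corollary~\ref{C:KV}), your $\Out_1(V)$ would always be trivial and could not reproduce the $\Z_2$ entries in Table~\ref{T:Q}. In short, your plan computes $\Inn(V)$ and $\Aut(V)/\Inn(V)$, not the triple $(K(V),\Out_1(V),\Out_2(V))$ the theorem asks for.

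Beyond the definitional mismatch, the upper-bound argument has a real gap. You propose to conjugate an arbitrary $g\in\Aut(V)$ by an inner automorphism so that it commutes with the orbifold involution $z$ (your $\sigma$), and then invoke $C_{\Aut(V_N)}(\theta)$. The paper does not argue this way and your reduction is not justified: in the cases $A_3^8$ and $A_7^2D_5^2$ the group $\Aut(V)$ strictly contains $C_{\Aut(V)}(z)$ (Remark~\ref{R:ext}), and nothing in your outline shows why $gzg^{-1}$ is $\Inn(V)$-conjugate to $z$. The paper's substitute for this step is structural: it proves that for each indecomposable root lattice $R\neq A_1$ the VOA $V_R^+$ is a simple affine VOA at an explicit level (Proposition~\ref{P:VOAgen1}), so $\langle V_1\rangle\cong\bigotimes_i V_{Q_i}^+$ has a completely understood module category. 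One then shows directly that every $g\in\Aut(V)$ preserves $V_N^+$ (Theorem~\ref{L:M1}, via Lemmas~\ref{L:DE}--\ref{R:t3}) except in the two exceptional cases, where $\Out(V)$ is instead bounded by the stabilizer $\Aut(C_N)$ of the simple-current subcode $C_N\subset S_N$ (Lemma~\ref{L:N0}, Proposition~\ref{P:O12}). This affine-VOA identification and the resulting control over untwisted versus twisted $V_{Q_i}^+$-modules is the engine of the proof, and it is absent from your sketch.
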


\begin{small}
\begin{longtable}[c]{|c|c|c|c|c|c|c|c|}
\caption{ $K(V)$, $\Out_1(V)$ and $\Out_2(V)$ for $V={V}_N^{\orb(\theta)}$} \label{T:Q}\\
\hline
No. in \cite{Sc93}&$Q$&$V_1$&${\rm rank}\, V_1$&$K(V)$&$\Out_1(V)$&$\Out_2(V)$&$\S$\\ \hline
$2$&$A_2^{12}$&$A_{1,4}^{12}$&$12$&$\Z_2$&$1$&$M_{12}$&\\ \hline
$5$&$A_3^8$&$A_{1,2}^{16}$&$16$&$\Z_2^5$&$1$&$\Z_2^4:L_4(2)$&\ref{S:A38}\\ \hline
$12$&$A_4^6$&$B_{2,2}^6$&$12$&$\Z_2$&$1$&$\Sym_5$&\\ \hline
$16$&$A_5^4D_4$&$A_{3,2}^4A_{1,1}^4$&$16$&$\Z_4\times\Z_2^3$&$\Z_2$&$\Z_2^4:\Sym_3$&\ref{S:A54D4}\\ \hline
$23$&$A_6^4$&$B_{3,2}^{4}$&$12$&$\Z_2$&$1$&${\rm Alt}_4$&\\ \hline
$25$&$A_7^2D_5^2$&$D_{4,2}^2B_{2,1}^{4}$&$16$&$\Z_2^3$&$1$&$\Sym_2\times\Sym_4$
&\ref{S:A72D52}\\ \hline
$29$&$A_8^3$&$B_{4,2}^{3}$&$12$&$\Z_2$&$1$&$\Sym_3$&\\ \hline
$31$&$A_9^2D_6$&$D_{5,2}^{2}A_{3,1}^{2}$&$16$&$\Z_4^2$&$\Z_2$&$\Sym_2\times\Sym_2$
&\ref{S:A92D6}\\ \hline
$38$&$E_6^4$&$C_{4,1}^{4}$&$16$&$\Z_2$&$1$&$\Sym_4$&\\ \hline
$39$&$A_{11}D_7E_6$&${D_{6,2}}B_{3,1}^{2}C_{4,1}$&$16$&$\Z_2^2$&$1$&$\Sym_2$
&\ref{S:A11D7E6}\\ \hline
$41$&$A_{12}^2$&$B_{6,2}^{2}$&$12$&$\Z_2$&$1$&$\Sym_2$&\\ \hline
$47$&$A_{15}D_9$&${D_{8,2}}B_{4,1}^{2}$&$16$&$\Z_2^2$&$1$&$\Sym_2$&\ref{S:A15D9}\\ \hline
$50$&$A_{17}E_7$&${D_{9,2}}{A_{7,1}}$&$16$&$\Z_8$&$\Z_2$&$1$&\ref{S:A17E7}\\ \hline
$57$&$A_{24}$&${B_{12,2}}$&$12$&$\Z_2$&$1$&$1$&\\ \hline
\end{longtable}
\end{small}

The groups $K(V)$, $\Out_1(V)$ and $\Out_2(V)$ in the theorem above are defined as follows (see also Section 2.3).
Let $V$ be a holomorphic VOA of central charge $24$ such that $V_1$ is semisimple.
Since the Lie algebra structure of $V_1$ determines the VOA structure of $V$, we focus on the action of the automorphism group $\Aut(V)$ on $V_1$; let $K(V)$ be the subgroup of $\Aut(V)$ which acts trivially on $V_1$.
Then $\Aut(V)/K(V)$ is a subgroup of $\Aut(V_1)$.
Let $\Inn(V)$ be the inner automorphism group of $V$.
Then $\Inn(V)/(K(V)\cap\Inn(V))$ is isomorphic to the inner automorphism group $\Inn(V_1)$ of the Lie algebra $V_1$.
Let $\Out(V)$ be the quotient group $\Aut(V)/(K(V)\Inn(V))$, which is a subgroup of the outer automorphism group $\Out(V_1)=\Aut(V_1)/\Inn(V_1)$ of $V_1$.
Let $\Out_1(V)$ be the subgroup of $\Out(V)$ which preserves every simple ideal of $V_1$ and set $\Out_2(V)=\Out(V)/\Out_1(V)$.
Then $\Aut(V)$ is described by $K(V)$, $\Out_1(V)$, $\Out_2(V)$ and $\Inn(V_1)$.

\medskip

Let us explain how to determine the automorphism group of the holomorphic VOA $V=V_N^{\orb(\theta)}=V_N^+\oplus V_N(\theta)_\Z$.
Since the conformal weight of $V_N(\theta)_\Z$ is two, we have $V_1=(V_N^+)_1$.
Let $Q=\bigoplus_{i=1}^t Q_i$ be the decomposition of the root sublattice $Q$ of $N$ into the orthogonal sum of indecomposable root lattices $Q_i$.
Then $V_1\cong \bigoplus_{i=1}^t(V_{Q_i}^+)_1$.

First, we recall (known) automorphisms of $V$.
Clearly the map $z$ which acts on $V_N^+$ and $V_N(\theta)_\Z$ by $1$ and $-1$, respectively, is an order $2$ automorphism of $V$.
Since $V$ is a $\Z_2$-graded simple current extension of $V_N^+$ and $N$ is unimodular, the centralizer $C_{\Aut(V)}(z)$ in $\Aut(V)$ of $z$ is a central extension of $\Aut(V_N^+)$ by $\langle z\rangle$ (\cite{Sh04,Sh06}).
Note that the group $\Aut(V_N^+)$ is well-studied in \cite{Sh04,Sh06}.
In addition, if $N$ is constructed from a binary code by the same manner as the Leech lattice, $V$ has an extra automorphism not in $C_{\Aut(V)}(z)$ (\cite{FLM}).
In fact, they generate $\Aut(V)$ (Corollary \ref{C:AutV}).

Next, we determine the group $K(V)$.
Since $K(V)$ preserves $V_N^+$, we have $K(V)\subset C_{\Aut(V)}(z)$, and hence $K(V)/\langle z\rangle\subset C_{\Aut(V)}(z)/\langle z\rangle\cong\Aut(V_N^+)$.
By the definition of $K(V)$, $K(V)/\langle z\rangle$ acts trivially on $V_1=(V_N^+)_1$.
Hence, the lift of $K(V)/\langle z\rangle$ in $\Aut(V_N)$ preserves the  Cartan subalgebra $\C\otimes_\Z N$ of $(V_N)_1$, and $K(V)/\langle z\rangle\subset O(\hat{N})/\langle\theta\rangle$, where $O(\hat{N})$ is the lift of the isometry group of $N$ in $\Aut(V_N)$.
Then we can describe $K(V)/\langle z\rangle$ by using the explicit action of $O(\hat{N})/\langle z\rangle$ on $(V_N^+)_1$.
On the other hand, $K(V)$ contains inner automorphisms $\sigma_x=\exp(-2\pi\sqrt{-1}x_{(0)})$ associated with vectors $x$ in the coweight lattice $P^\vee$ of the Lie algebra $V_1$.
By using the isomorphism between the VOA $V_{Q_i}^+$ and the simple affine VOA associated with the Lie algebra $(V_{Q_i}^+)_1$ at some positive integral level (Proposition \ref{P:VOAgen1}), we prove that $K(V)$ coincides with $\{\sigma_x\mid x\in P^\vee\}$ and determine the group structure of $K(V)$.
In particular, $K(V)$ is a subgroup of $\Inn(V)$.

Finally, we determine the groups $\Out_1(V)$ and $\Out_2(V)$.
If the semisimple Lie algebra $V_1$ has no diagram automorphisms, then $\Out_1(V)=1$.
If all ideals $(V_{Q_i}^+)_1$ of $V_1$ are simple, then $\Out_2(V)$ is obtained from the automorphism group of the glue code $N/Q$, which is described in \cite{CS}.
Then $7$ cases are done.
For the remaining $7$ cases, we consider the set $C_N$ consisting of (isomorphism classes of) simple current $\langle V_1\rangle$-submodules of $V_{N_0}^+$, where $N_0=N\cap (Q/2)$; this set $C_N$, called ``Glue" in \cite{Sc93}, has a group structure under the fusion product.
We prove that $\Aut(V)$ preserves $V_{N_0}^+$, which shows that  $\Out(V)\subset\Aut(C_N)$.
By the description of the glue code $N/Q$ in \cite{CS}, we obtain a generator of $C_N$ and determine $\Aut(C_N)$.
Considering the explicit actions of (known) automorphisms of $V$ on $C_N$, we prove that $\Out(V)=\Aut(C_N)$ and determine the group structure of $\Out_1(V)$ and $\Out_2(V)$.

\medskip

The organization of this article is as follows: In Section 2, we review basic facts about integral lattices and VOAs.
In Section 3, we briefly review lattice VOAs $V_L$, subVOAs $V_L^+$ and simple affine VOAs.
In Section 4, we prove that the VOA $V_R^+$ associated with indecomposable root lattice $R$ is a simple affine VOA at some positive integral level if $R\not\cong A_1$.
We also study automorphisms and irreducible modules for $V_R^+$ via the isomorphism.
In Section 5, we determine $\Aut(V_L^{\orb(\theta)})$ under some assumptions on even unimodular lattices $L$.
In Section 6, we prove Theorem \ref{T:main}, the main theorem of this article.

\medskip

\paragraph{\bf Acknowledgments.} Part of this work was done while the author was staying at Institute
of Mathematics, Academia Sinica, Taiwan in August, 2018. He is grateful to the institute.
He also would like to thank Ching Hung Lam for helpful comments.

\begin{center}
{\bf Notations}
\begin{footnotesize}
\begin{tabular}{ll}\\
$(\cdot|\cdot)$& the positive-definite symmetric bilinear form on $\R^r$ or\\ 
& the normalized Killing form so that $(\alpha|\alpha)=2$ for long roots $\alpha$.\\
$A:B$& a split extension of $B$ by $A$, which has the normal subgroup $A$ and a complement $B$.\\
$C_G(a)$& the centralizer of $a$ in a group $G$.\\
$C_N$& the subgroup of $S_N$ consisting of all simple current $\langle (V_N^+)_1\rangle$-submodules of $V_N^+$.\\
$f_u$& the element of $\Hom(L,\Z_2)$ associated with $u\in L^*$, which sends $v\in L$ to $(-1)^{(u|v)}$.\\
$G_2(N)$& the permutation group induced from the action of $O(N)$ on the glue code $N/Q$.\\
$\theta$& an element in $O(\hat{L})$ whose image is $-1$ under the canonical map $O(\hat{L})\to O(L)$.\\
$\Inn(V)$& the normal subgroup of $\Aut(V)$ generated by inner automorphisms of a VOA $V$.\\
$\Irr(V)$& the set of (isomorphism classes of) irreducible modules for a VOA $V$.\\
$K(V)$& the subgroup of $\Aut(V)$ which acts trivially on $V_1$.\\
$L^*$& the dual lattice of a lattice $L$.\\
$L_\mathfrak{g}(k,0)$& the simple affine VOA associated with the semisimple Lie algebra $\mathfrak{g}$ at level $k$.\\
$L_\g(k,\Lambda)(=[\Lambda])$& the irreducible $L_\g(k,0)$-module with the highest weight $\Lambda$.\\
$\lambda_i$& the fundamental weights of a root lattice (with respect to fixed simple roots).\\
$\Lambda_i$& the fundamental weights of a simple Lie algebra (with respect to fixed simple roots).\\
$(\mu)^\pm$, $(\mu)$, $(\chi)^\pm$& the (isomorphism classes of) irreducible $V_L^+$-modules.\\
$M\circ g$& the module $(M,Y_{M\circ g})$, where $Y_{M\circ g}(v,z)=Y_M(gv,z)$.\\
$O(L)$& the isometry group of a lattice $L$.\\
$O(\hat{L})$& the lift of $O(L)$ to a subgroup of $\Aut(\hat{L})$. \\
$\Out(V)$& the image of the group homomorphism $\Aut(V)\to\Out(V_1)$ in Section 2.3 for a VOA $V$.\\
$\Out_1(V)$& the subgroup of $\Out(V)$ which preserves every simple ideal of $V_1$.\\
$\Out_2(V)$& the quotient group $\Out(V)/\Out_1(V)$.\\
$S_N$& the abelian group consisting of all simple current modules for the VOA $\langle (V_N^+)_1\rangle$.\\ 
$\sigma_v$& the inner automorphism $\exp(-2\pi\sqrt{-1}v_{(0)})$ of a VOA $V$ associated with $v\in V_1$.\\
$\Sym_n$& the symmetric group of degree $n$.\\
$V_L$& the lattice VOA associated with the even lattice $L$.\\
$V_L^{+}$& the fixed-point subspace of $\theta$, which is a full subVOA of $V_L$.\\
$V_L^{\orb(\theta)}$& the holomoprhic VOA obtained by applying the $\Z_2$-orbifold construction to $V_L$ and $\theta$.\\
$X_n(=A_n,\dots)$& (the type of) a root system, a simple Lie algebra or a root lattice.\\
$X_{n,k}$& (the type of) a simple Lie algebra whose type is $X_n$ and level is $k$.\\
$\Z_n$&the cyclic group of order $n$.\\
\end{tabular}
\end{footnotesize}
\end{center}

\section{Preliminary}
In this section, we review basics about integral lattices and VOAs.

\subsection{Lattices}
Let $(\cdot|\cdot)$ be a positive-definite symmetric bilinear form on $\R^r$.
A subset $L$ of $\R^r$ is called a (positive-definite) \emph{lattice} of rank $r$ if $L$ has a basis $e_1,e_2,\dots,e_r$ of $\R^r$ satisfying $L=\bigoplus_{i=1}^r\Z e_i$.
Let $L^*$ denote the dual lattice of a lattice $L$ of rank $r$, that is, $L^*=\{v\in \R^r\mid ( v| L)\subset\Z\}$.
For $v\in \R^m$, we call $(v|v)$ the (squared) \emph{norm} of $v$.
A lattice $L$ is said to be \emph{even} if the norm of any vector in $L$ is even, and is said to be \emph{unimodular} if $L=L^*$.
A lattice is (orthogonally) \emph{indecomposable} if it cannot be written as an orthogonal sum of proper sublattices.
It is known that any lattice can be uniquely written as the orthogonal sum of indecomposable sublattices.
A group automorphism $g$ of a lattice $L$ is call an \emph{isometry} of $L$ if $(g(v)|g(w))=(v|w)$ for all $v,w\in L$; 
let $O(L)$ denote the isometry group of $L$.

Let $L$ be an even lattice.
Then the set of norm $2$ vectors of $L$ forms a root system.
Let $Q$ be the sublattice of $L$ generated by norm $2$ vectors of $L$; this sublattice is often called the \emph{root sublattice} of $L$.
We call $L$ a \emph{root lattice} if $L=Q$.
It is known (e.g. \cite{Hu}) that the root system of an indecomposable root lattice is of type $A_\ell$ ($\ell\ge1$), $D_m$ ($m\ge4$) or $E_n$ $(n=6,7,8)$.
We often denote the root lattice by the type of its root system.

Even unimodular lattices of rank $24$, called \emph{Niemeier lattices}, are classified by Niemeier as follows:
\begin{theorem}[\cite{Nie}] There exist Niemeier lattices with the following $24$ root sublattices:\\
\begin{tabular}{llllllll}
$A_{1}^{24}$, & $A_{2}^{12}$, &  $A_{3}^{8}$, &  $A_{4}^6$, & $A_{5}^4D_{4}$, & $D_{4}^6$, &  $A_{6}^4$, & $A_{7}^2D_{5}^2$,\\ 
$A_{8}^3$, & $A_{9}^2D_{6}$, & $D_{6}^4$, & $E_{6}^4$, & $A_{11}D_{7}E_{6}$, & $A_{12}^2$, & $D_{8}^3$, & $A_{15}D_{9}$,\\  $A_{17}E_{7}$, & $D_{10}E_{7}^2$, & $D_{12}^2$, & $A_{24}$, & $D_{16}E_{8}$, & $E_{8}^3$, & $D_{24}$, & $0$.
\end{tabular}\\
Moreover, the isometry class of a Niemeier lattice is uniquely determined by that of its root sublattice.
In particular, there exist exactly $24$ Niemeier lattices, up to isometry.
\end{theorem}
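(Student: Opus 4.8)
The plan is to follow the original argument of \cite{Nie}, which rests on three inputs: a rigidity constraint on the root system coming from modular forms, a short finite enumeration, and a case‑by‑case analysis of the admissible ``glue''. \emph{Step 1.} Let $N$ be an even unimodular lattice of rank $24$, let $R$ be its root system (the set of norm $2$ vectors), $Q$ its root sublattice, and $r_2=|R|$. The theta series of $N$ is a modular form of weight $12$ for $SL_2(\Z)$, and since there are no nonzero cusp forms of weight $14$, every degree‑two spherical theta series of $N$ vanishes; evaluating this at $q^1$ gives the identity
$$\sum_{\alpha\in R}(\alpha|x)^2=\frac{r_2}{12}\,(x|x)\qquad(x\in\R^{24}).$$
Taking $x$ orthogonal to $Q$ forces $r_2=0$ (the Leech case) or $\mathrm{rank}\,Q=24$. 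In the latter case, restricting $x$ to the span of an irreducible component $R_i$ of $R$ and using the standard identity $\sum_{\alpha\in R_i}(\alpha|x)^2=2h_i(x|x)$ for the normalization $(\alpha|\alpha)=2$ (with $h_i$ the Coxeter number of $R_i$) yields $2h_i=r_2/12$; hence all components of $R$ share a common Coxeter number $h$, and $r_2=24h$. \emph{Step 2.} Using $h(A_\ell)=\ell+1$, $h(D_m)=2m-2$, $h(E_6)=12$, $h(E_7)=18$, $h(E_8)=30$, I would list all root systems of rank $24$ whose irreducible components all have the same Coxeter number; this short finite check produces exactly the $23$ nonzero types in the statement, and with the empty root system this gives $24$ candidate root systems.

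\emph{Step 3: gluing.} Fix a root lattice $Q$ from the list. If $N$ is a Niemeier lattice with root sublattice isometric to $Q$, then $Q\subset N\subset Q^*$, so $C:=N/Q$ is a subgroup of the discriminant group $Q^*/Q$ satisfying: (i) $C$ is isotropic for the discriminant quadratic form (so $N$ is even and integral); (ii) $|C|=|Q^*/Q|^{1/2}$ (so $\det N=1$); and (iii) no nonzero coset in $C$ contains a norm $2$ vector (so that the root sublattice of $N$ is exactly $Q$, not larger). Since any isometry of $N$ restricts to an isometry of its root sublattice, Niemeier lattices with root system type $Q$ correspond bijectively to $O(Q)$‑orbits of such subgroups $C$; and since the Weyl group of $Q$ acts trivially on $Q^*/Q$, only the finite ``outer'' part of $O(Q)$ (diagram automorphisms together with permutations of isomorphic components) acts. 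I would then verify, case by case, that there is exactly one such orbit for each $Q$. For $Q=A_1^{24}$ and $Q=A_2^{12}$ this reduces to the uniqueness of the binary, respectively ternary, Golay code among self‑dual codes with the appropriate minimum weight; for the remaining root lattices the discriminant group is small enough, or conditions (ii)--(iii) restrictive enough, that a direct inspection of the glue vectors tabulated in \cite{CS} pins down $C$ up to this action.

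\emph{Step 4: the Leech case and the count.} For the empty root system one invokes the uniqueness of the even unimodular lattice of rank $24$ with no roots, namely the Leech lattice. Combining this with Step 3 shows that the isometry class of a Niemeier lattice is determined by that of its root sublattice, and that there are exactly $24$ of them. As an independent consistency check on completeness, one may verify that $\sum_N |O(N)|^{-1}$ over the $24$ lattices equals the value given by the Minkowski--Siegel mass formula for even unimodular lattices of rank $24$; the obstacle along that alternative route is the separate computation of each order $|O(N)|$. Overall, the principal obstacle in the argument is the case‑by‑case glue analysis of Step 3, and within it the delicate point is always condition (iii): checking that the chosen glue creates no new roots.
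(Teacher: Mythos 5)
The paper does not prove this theorem at all: it is quoted from Niemeier's classification \cite{Nie} (see also \cite[Chapters 16, 18]{CS}), so there is no internal proof to compare against. Your outline is the standard modern route, essentially Venkov's proof as presented in Conway--Sloane, and it is correct in structure: the weight-$14$ cusp-form argument forcing either $r_2=0$ or a rank-$24$ root system with all components of common Coxeter number $h$ and $r_2=24h$ is right (including the identity $\sum_{\alpha\in R_i}(\alpha|x)^2=2h_i(x|x)$ for simply-laced components normalized to norm $2$), the enumeration in Step 2 does give exactly the $23$ nonzero types, and the reduction of existence and uniqueness to $O(Q)$-orbits of isotropic glue codes $C\subset Q^*/Q$ with $|C|^2=|Q^*/Q|$ and no norm-$2$ vectors in nonzero cosets is the correct framework, using that $W(Q)$ acts trivially on $Q^*/Q$ so only the diagram/permutation part of $O(Q)$ matters. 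Be aware of what this differs from and what it leaves open: Niemeier's original argument proceeded via Kneser's neighbour method rather than the spherical theta series, and your Steps 3--4 defer the two genuinely heavy inputs --- the case-by-case verification that each $Q$ admits exactly one admissible glue code up to $H(Q)$ (including the uniqueness of the binary and ternary Golay codes for $A_1^{24}$ and $A_2^{12}$, and the root-avoidance condition (iii) in every case) and Conway's theorem that the Leech lattice is the unique rootless even unimodular lattice of rank $24$. Invoking these as known results is legitimate for a classification of this kind, but they are the substance of the theorem rather than routine checks, so your proposal should be read as a correct reduction to those established facts rather than a self-contained proof.
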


Let us recall the automorphism groups of Niemeier lattices from \cite[Chapter 16]{CS}.
Let $N$ be a Niemeier lattice with the root sublattice $Q\neq0$.
Let $W(Q)$ be the Weyl group, the normal subgroup of $O(N)$ generated by the reflections associated with roots in $Q$.
Let $Q=\bigoplus_{i=1}^tQ_i$ be the orthogonal sum of indecomposable root lattices and let $H(Q)$ be the subgroup of $O(Q)$ generated by diagram automorphisms of $Q_i$ and possible permutations on $\{Q_i\mid 1\le i\le t\}$.
Set $H(N)=H(Q)\cap O(N)$.
Then $O(N)$ is a split extension of $H(N)$ by $W(Q)$.
Let $G_1(N)$ be the subgroup of $H(N)$ that preserves every indecomposable component $Q_i$ and set $G_2(N)=H(N)/G_1(N)$.
Then $G_2(N)$ acts on $\{Q_i\mid 1\le i\le t\}$ as a permutation group.

The following lemma follows immediately from the fact that the Weyl group of an indecomposable root lattice contains the $-1$-isometry if and only if the root lattice is $A_1$, $D_{2n}$ ($n\ge2$), $E_7$ or $E_8$.

\begin{lemma}\label{L:-1G0} Assume that $Q\neq0$.
Then the $-1$-isometry of $N$ does not belong to $W(Q)$ if and only if $Q$ is $A_2^{12}$, $A_3^8$, $A_4^6$, $A_5^4D_4$, $A_6^4$, $A_7^2D_5^2$, $A_8^3$, $A_9^2D_6$, $E_6^4$, $A_{11}D_7E_6$, $A_{12}^2$, $A_{15}D_9$, $A_{17}E_7$ or $A_{24}$.
\end{lemma}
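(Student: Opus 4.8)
The plan is to reduce the assertion to the behaviour of $-1$ on each indecomposable component of $Q$ and then invoke the criterion stated just above the lemma. First I would observe that the $-1$-isometry of $N$ restricts to the $-1$-isometry of $Q$ (the root sublattice is preserved by every isometry of $N$), and that, since $-1$ permutes the roots of $Q$, it preserves each indecomposable summand $Q_i$ in the decomposition $Q=\bigoplus_{i=1}^t Q_i$, acting on $Q_i$ as the $-1$-isometry of $Q_i$.

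Next I would use the fact that the Weyl group decomposes as a direct product $W(Q)=W(Q_1)\times\cdots\times W(Q_t)$ acting componentwise on $Q=\bigoplus_{i=1}^t Q_i$; consequently the $-1$-isometry of $N$ lies in $W(Q)$ if and only if the $-1$-isometry of $Q_i$ lies in $W(Q_i)$ for every $i$. Combining this with the stated criterion — $W(R)$ contains the $-1$-isometry of an indecomposable root lattice $R$ precisely when $R$ is $A_1$, $D_{2n}$ $(n\ge2)$, $E_7$ or $E_8$ — the condition ``$-1\notin W(Q)$'' becomes ``$Q$ has at least one indecomposable component not of type $A_1$, $D_{2n}$, $E_7$ or $E_8$''.

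Finally I would run through Niemeier's classification of the $24$ root sublattices and apply this criterion summand by summand. The summands $A_\ell$ with $\ell\ge2$, $D_{2n+1}$ with $n\ge2$, and $E_6$ all violate the criterion, so any Niemeier root sublattice containing such a summand has $-1\notin W(Q)$; this picks out exactly the $14$ lattices $A_2^{12}$, $A_3^8$, $A_4^6$, $A_5^4D_4$, $A_6^4$, $A_7^2D_5^2$, $A_8^3$, $A_9^2D_6$, $E_6^4$, $A_{11}D_7E_6$, $A_{12}^2$, $A_{15}D_9$, $A_{17}E_7$, $A_{24}$. The remaining nonzero root sublattices $A_1^{24}$, $D_4^6$, $D_6^4$, $D_8^3$, $D_{10}E_7^2$, $D_{12}^2$, $D_{16}E_8$, $E_8^3$, $D_{24}$ consist only of allowed summands and hence satisfy $-1\in W(Q)$, which completes the equivalence.

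Since the argument is a finite table check once the structural reduction is in place, there is no genuine obstacle here; the only point requiring a little care is the two preliminary observations — that $-1$ preserves each indecomposable summand and acts there as $-1$, and that $W(Q)$ is the corresponding direct product — which together license applying the indecomposable criterion component by component.
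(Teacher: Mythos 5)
Your proposal is correct and follows essentially the same route as the paper: the paper derives the lemma immediately from the fact that the Weyl group of an indecomposable root lattice contains the $-1$-isometry exactly for $A_1$, $D_{2n}$ $(n\ge2)$, $E_7$, $E_8$, applied component by component to Niemeier's list. Your two preliminary observations (that $-1$ preserves each summand and that $W(Q)=\prod_i W(Q_i)$) simply make explicit the reduction the paper leaves implicit.
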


\begin{remark}\label{L:-1} Assume that $Q$ is one in Lemma \ref{L:-1G0}, that is, $-1\notin W(Q)$.
Then $G_1(N)\cong\Z_2$ (\cite[Table 16.1]{CS}).
Hence $\{g\in O(N)\mid g=\pm1\ {\rm on}\ Q_i,\ 1\le \forall i\le t\}/\langle-1\rangle$ is isomorphic to $\Z_2$ if $Q$ is $A_5^4D_4$, $A_9^2D_6$ or $A_{17}E_7$; it is equal to $1$ otherwise.
\end{remark}

We summarize the groups $G_2(N)$ for the $14$ Niemeier lattices in Lemma \ref{L:-1G0} from \cite[Chapter 16]{CS}.

\begin{table}[bht]
\caption{Groups $G_2(N)$} \label{T:G2N}
\begin{tabular}{|c|c|c|c|c|c|c|c|}
\hline
$Q$& $A_2^{12}$& $A_3^8$& $A_4^6$& $A_5^4D_4$& $A_6^4$& $A_7^2D_5^2$& $A_8^3$ \\\hline
$G_2(N)$&$M_{12}$&$\Z_2^3:L_3(2)$&$\Sym_5$&$\Sym_4$&$\Alt_4$&$\Sym_2^2$&$\Sym_3$
\\
 \hline\hline
$Q$& $A_9^2D_6$& $E_6^4$& $A_{11}D_7E_6$& $A_{12}^2$& $A_{15}D_9$& $A_{17}E_7$ & $A_{24}$\\\hline
$G_2(N)$&$\Sym_2$&$\Sym_4$&$1$&$\Sym_2$&$1$&$1$&$1$\\ \hline
\end{tabular}
\end{table}

\begin{remark} In  Table \ref{T:G2N}, we denote by $A:B$ a split extension of a group $B$ by a group $A$.
We also denote by $L_n(2)$ and $M_{12}$ the (projective special) linear group on $\F_2^n$ and the Mathieu group of degree $12$, respectively. 
\end{remark}

\subsection{Vertex operator algebras}
A \emph{vertex operator algebra} (VOA) $(V,Y,\1,\omega)$ is a $\Z$-graded vector space $V=\bigoplus_{m\in\Z}V_m$ over the complex field $\C$ equipped with a linear map
$$Y(a,z)=\sum_{i\in\Z}a_{(i)}z^{-i-1}\in ({\rm End}\ V)[[z,z^{-1}]],\quad a\in V,$$
the \emph{vacuum vector} $\1\in V_0$ and the \emph{conformal vector} $\omega\in V_2$
satisfying certain axioms (\cite{Bo,FLM}). 
The operators $L(m)=\omega_{(m+1)}$, $m\in \Z$, satisfy the Virasoro relation:
$$[L{(m)},L{(n)}]=(m-n)L{(m+n)}+\frac{1}{12}(m^3-m)\delta_{m+n,0}c\ {\rm id}_V,$$
where $c\in\C$ is called the \emph{central charge} of $V$.
A \emph{vertex operator subalgebra} (or a \emph{subVOA}) is a graded subspace of
$V$ which has a structure of a VOA such that the operations and its grading
agree with the restriction of those of $V$ and  they share the vacuum vector.
A subVOA is said to be \emph{full} if it has the same conformal vector as $V$.

For a VOA $V$, a $V$-\emph{module} $(M,Y_M)$ is a $\C$-graded vector space $M=\bigoplus_{m\in\C} M_{m}$ equipped with a linear map
$$Y_M(a,z)=\sum_{i\in\Z}a_{(i)}z^{-i-1}\in (\End\ M)[[z,z^{-1}]],\quad a\in V$$
satisfying a number of conditions (\cite{FHL,DLM2}).
We often denote it by $M$.
If $M$ is irreducible, then there exists $w\in\C$ such that $M=\bigoplus_{m\in\Z_{\geq 0}}M_{w+m}$ and $M_w\neq0$; the number $w$ is called the \emph{conformal weight} of $M$.
Let $\Irr(V)$ denote the set of all isomorphism classes of irreducible $V$-modules.
We often identify an element in $\Irr(V)$ with its representative.

A VOA is said to be  \emph{rational} if its admissible module category is semisimple.
(See \cite{DLM2} for the definition of admissible modules.)
A rational VOA is said to be \emph{holomorphic} if it itself is the only irreducible module up to isomorphism.
A VOA $V$ is said to be \emph{of CFT-type} if $V_0=\C\1$ (note that $V_n=0$ for all $n<0$ if $V_0=\C\1$), and is said to be \emph{$C_2$-cofinite} if the codimension in $V$ of the subspace spanned by the vectors of form $u_{(-2)}v$, $u,v\in V$, is finite.
A module is said to be \emph{self-dual} if it is isomorphic to its contragredient module (\cite{FHL}).
A VOA is said to be \emph{strongly regular} if it is rational, $C_2$-cofinite, self-dual and of CFT-type.
Note that a strongly regular VOA is simple.

Let $V$ be a VOA of CFT-type.
Then, the weight one space $V_1$ has a Lie algebra structure via the $0$-th product.
Moreover, the operators $v_{(m)}$, $v\in V_1$, $m\in\Z$, define  an affine representation of the Lie algebra $V_1$ on $V$.
For a simple Lie subalgebra $\mathfrak{s}$ of $V_1$, the \emph{level} of $\mathfrak{s}$ is defined to be the scalar by which the canonical central element acts on $V$ (as the affine representation);
when the type of $\mathfrak{s}$ is $X_n$ and the level of $\mathfrak{s}$ is $k$, we denote the type of $\mathfrak{s}$ (with level) by $X_{n,k}$.

\begin{proposition}[{\cite[Theorem 1.1, Corollary 4.3]{DM06}}]\label{Prop:posl} Let $V$ be a strongly regular VOA.
Then $V_1$ is reductive.
Let $\mathfrak{s}$ be a simple Lie subalgebra of $V_1$.
Then $V$ is an integrable module for the affine representation of $\mathfrak{s}$, and the subVOA generated by $\mathfrak{s}$ is isomorphic to the simple affine VOA associated with $\mathfrak{s}$ at some positive integral level.
\end{proposition}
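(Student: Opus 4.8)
The plan is to derive both assertions from three ingredients: the non-degenerate invariant symmetric bilinear form that $V$ carries because it is self-dual and of CFT-type; the elementary observation that a root vector of a simple subalgebra $\mathfrak{s}\subseteq V_1$ has all of its modes mutually commuting; and the deeper fact, obtained from $C_2$-cofiniteness, that such a vector has its low modes acting locally nilpotently on $V$. Reductivity of $V_1$ and the positive integrality of the level both fall out of the resulting integrable $\widehat{\mathfrak{s}}$-module structure on $V$. I will treat the level statement in detail and be brief about reductivity, which is the first assertion of \cite{DM06} and rests on the same circle of ideas.

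First I recall that, since $V$ is self-dual and of CFT-type, $V$ admits a non-degenerate invariant symmetric bilinear form $\langle\cdot,\cdot\rangle$, unique up to scalar, with $\langle V_m,V_n\rangle=0$ whenever $m\neq n$; in particular its restriction to $V_1$ is non-degenerate. On $V_1$ one has $u_{(1)}v=\langle u,v\rangle\1$ and the invariance identity $\langle u_{(0)}v,w\rangle=-\langle v,u_{(0)}w\rangle$, so $\langle\cdot,\cdot\rangle$ restricts to an invariant form for the Lie algebra $(V_1,[\,\cdot\,,\cdot\,])$, $[u,v]:=u_{(0)}v$. Now fix a simple Lie subalgebra $\mathfrak{s}\subseteq V_1$, a Cartan subalgebra, a root system $\Delta$, and for $\alpha\in\Delta$ root vectors $e_\alpha$ with $f_\alpha:=e_{-\alpha}$ and $h_\alpha:=[e_\alpha,f_\alpha]$. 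The operators $\{x_{(n)}\mid x\in\mathfrak{s},\ n\in\Z\}$ together with $\mathrm{id}_V$ span a representation of the affine Kac--Moody algebra $\widehat{\mathfrak{s}}$ on $V$, and, $V$ being simple, the canonical central element acts on $V$ by a scalar $k$, the level. The key elementary point is that $\langle e_\alpha,e_\alpha\rangle=0$, because the form pairs $e_\alpha$ with $e_{-\alpha}$; hence $e_{\alpha(1)}e_\alpha=0$, while $e_{\alpha(0)}e_\alpha=[e_\alpha,e_\alpha]=0$ and $e_{\alpha(n)}e_\alpha\in V_{1-n}=0$ for $n\geq2$, so by the commutator formula $[e_{\alpha(m)},e_{\alpha(n)}]=\sum_{j\geq0}\binom{m}{j}(e_{\alpha(j)}e_\alpha)_{(m+n-j)}=0$: all modes of $e_\alpha$, and likewise of $f_\alpha$, mutually commute.

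The crux --- and the step I expect to be the main obstacle --- is to show, using $C_2$-cofiniteness, that for every $x\in V_1$ with $x_{(1)}x=0$ (equivalently, whose modes all commute) the operators $x_{(0)}$ and $x_{(-1)}$ act locally nilpotently on $V$; note that finite-dimensionality of the $V_n$ does not by itself give this, since $x_{(0)}$ and $x_{(-1)}$ need not be nilpotent on the finite-dimensional space $V_n$. Granting it, the Chevalley generators of $\widehat{\mathfrak{s}}$ --- namely $e_{\alpha_i(0)}$ and $f_{\alpha_i(0)}$ for the finite simple roots $\alpha_i$, together with $f_{\theta(1)}$ (which even without the input lowers weight, hence is automatically locally nilpotent on a CFT-type $V$) and $e_{\theta(-1)}$, for a long root $\theta$ --- all act locally nilpotently on $V$, so $V$ is an integrable $\widehat{\mathfrak{s}}$-module, and the representation theory of affine Kac--Moody algebras forces $k\in\Z_{\geq0}$. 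Moreover $k\neq0$: the vacuum $\1$ is annihilated by $x_{(n)}$ for $x\in\mathfrak{s}$, $n\geq0$, so it is a highest weight vector for $\widehat{\mathfrak{s}}$ of weight $0$ and level $k$, and the only integrable highest weight $\widehat{\mathfrak{s}}$-module of level $0$ is the one-dimensional trivial one, whereas $e_{\theta(-1)}\1=e_\theta\neq0$; hence $k$ is a positive integer. Finally, the subVOA $\langle\mathfrak{s}\rangle$ generated by $\mathfrak{s}$ receives a surjective VOA homomorphism from the level-$k$ universal affine vertex algebra $V^{k}(\mathfrak{s})$, and since $\langle\mathfrak{s}\rangle\subseteq V$ is integrable over $\widehat{\mathfrak{s}}$ the relation $e_{\theta(-1)}^{k+1}\1=0$ holds in it; as this element generates the (unique) maximal proper ideal of $V^{k}(\mathfrak{s})$ at positive integral level, $\langle\mathfrak{s}\rangle$ is a non-zero quotient of the simple affine VOA $L_{\mathfrak{s}}(k,0)$ and therefore isomorphic to it.

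For reductivity of $V_1$ --- i.e. $\mathrm{rad}\,V_1=Z(V_1)$, equivalently the nilradical $\mathfrak{n}=[V_1,\mathrm{rad}\,V_1]$ is zero --- I would invoke \cite{DM06}. The argument there again combines the non-degenerate invariant form on $V_1$ with the local-nilpotency input above: a non-zero nilradical yields a non-zero $x\in V_1$ with $x_{(1)}x=0$ and $(\mathrm{ad}\,x)^2=0$, hence, via the input, a unipotent inner automorphism $\exp(x_{(0)})$ of $V$ restricting to $1+\mathrm{ad}\,x$ on $V_1$, and a closer analysis shows this to be incompatible with the non-degeneracy of $\langle\cdot,\cdot\rangle|_{V_1}$ under $C_2$-cofiniteness; I would simply quote this. (In the applications made in this paper, $V_1$ is in any case known in advance to be semisimple, so only the level statement is actually used.)
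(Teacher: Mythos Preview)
The paper does not give a proof of this proposition: it is stated purely as a citation of \cite[Theorem~1.1, Corollary~4.3]{DM06}, with no accompanying argument. So there is no ``paper's own proof'' to compare your attempt against.

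What you have written is a reasonable outline of the Dong--Mason argument itself: the non-degenerate invariant form coming from self-duality and CFT-type, the vanishing $\langle e_\alpha,e_\alpha\rangle=0$ and resulting commutativity of the modes of a root vector, the local nilpotency of $e_{\alpha(0)}$ and $e_{\theta(-1)}$ extracted from $C_2$-cofiniteness, and the ensuing integrability of $V$ over $\widehat{\mathfrak{s}}$ forcing $k\in\Z_{>0}$ and $\langle\mathfrak{s}\rangle\cong L_{\mathfrak{s}}(k,0)$. You correctly flag the local-nilpotency step as the genuine technical core and are candid that you would quote it rather than reprove it; the same goes for the reductivity of $V_1$. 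One small remark: the level $k$ is already a scalar just from the VOA axioms (via $x_{(1)}y=\langle x,y\rangle\1$ on $V_1$), so simplicity of $V$ is not needed for that particular point. In any case, since the present paper treats Proposition~\ref{Prop:posl} as a black box, your sketch is a gloss on the cited reference rather than an alternative to anything done here.
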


Assume that $V$ is strongly regular.
Then the fusion products are defined on irreducible $V$-modules (\cite{HL}).
An irreducible $V$-module $M^1$ is called a \emph{simple current module} if for any irreducible $V$-module $M^2$, the fusion product $M^1\boxtimes M^2$ is also an irreducible $V$-module.

\subsection{Automorphisms of vertex operator algebras}
Let $V$ be a VOA.
A linear automorphism $g$ of $V$ is called a (VOA) \emph{automorphism} of $V$ if $$ g\omega=\omega\quad {\rm and}\quad gY(v,z)=Y(gv,z)g\quad \text{ for all } v\in V.$$
Let us denote the group of all automorphisms of $V$ by $\Aut(V)$. 
Note that $\Aut(V)$ preserves $V_n$ for every $n\in\Z$.

Assume that $V$ is of CFT-type.
Then for $v\in V_1$, $\exp(v_{(0)})$ is an automorphism of $V_1$, which is called an \emph{inner automorphism}.
Let $\Inn(V)$ denote the normal subgroup generated by inner automorphisms of $V$.
When $v_{(0)}$ is semisimple on $V$, we set $$\sigma_{v}=\exp(-2\pi\sqrt{-1}v_{(0)})\in \Inn(V).$$ 

We further assume that the Lie algebra $V_1$ is semisimple; $V_1=\bigoplus_{i=1}^s \g_i$, where $\g_i$ are simple ideals.
Let $$\varphi_1:\Aut (V)\to \Aut (V_1)$$ be the restriction map and let $$\varphi_2:\Aut(V_1)\to\Out(V_1)=\Aut(V_1)/\Inn(V_1)$$ be the canonical map, where $\Inn(V_1)$ and $\Out(V_1)$ are the inner and the outer automorphism groups of the Lie algebra $V_1$, respectively.
Set 
\begin{equation}
K(V):=\Ker\, \varphi_1\subset\Aut(V),\quad \Out(V):=\Imm (\varphi_2\circ\varphi_1)\subset\Out(V_1).\label{Def:KO}
\end{equation}
Note that $\varphi_1(\Inn(V))=\Inn(V_1)$.
Set
\begin{equation}
\Out_1(V):=\{g\in \Out(V)\mid g(\g_i)=\g_i,\ 1\le \forall i\le s\},\ \Out_2(V):=\Out(V)/\Out_1(V).\label{Def:O12}
\end{equation} 
Then $\Out_2(V)$ acts faithfully on $\{\g_i\mid 1\le i\le s\}$ as a permutation group.

\subsection{Action of automorphisms on modules}

Let $V$ be a VOA and $g\in\Aut(V)$.
Let $M=(M,Y_M)$ be a $V$-module.
The $V$-module $M\circ g$ is defined as follows:
\[
\begin{split}
& M\circ g =M \quad \text{ as a vector space;}\\
& Y_{M\circ g} (a, z) = Y_M(ga, z)\quad \text{ for any } a\in V.
\end{split}
\]

The following lemma is immediate.
\begin{lemma}\label{L:gcon} Let $M,M^1,M^2$ be $V$-modules and let $g\in\Aut(V)$.
\begin{enumerate}[{\rm (1)}]
\item If $M$ is irreducible, then so is $M\circ g$.
\item If $M^1$ and $M^2$ are isomorphic, then so are $M^1\circ g$ and $M^2\circ g$.
\item If $M$ is a simple current module, then so is $M\circ g$.
\end{enumerate}
\end{lemma}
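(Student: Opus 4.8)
The plan is to exploit that passing from $M$ to $M\circ g$ merely relabels the vertex operators through the bijection $a\mapsto ga$ of $V$, so that every module-theoretic property is transported along this relabelling. First I would record two structural observations. Since $g\omega=\omega$, we have $Y_{M\circ g}(\omega,z)=Y_M(\omega,z)$, so $M\circ g$ and $M$ carry the same $L(n)$-action and hence the same $\C$-grading. Moreover, a subspace $N\subseteq M$ is a $V$-submodule of $(M,Y_{M\circ g})$ precisely when it is stable under all operators $(ga)_{(i)}$ on $M$; because $g$ maps $V$ bijectively onto itself, this is exactly the condition that $N$ be a $V$-submodule of $(M,Y_M)$. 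Thus $M$ and $M\circ g$ have the same lattice of submodules, which gives (1): if $0$ and $M$ are the only submodules of $M$, the same holds for $M\circ g$.

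For (2), given an isomorphism $\phi\colon M^1\to M^2$ of $V$-modules, I would simply check that the same linear map, viewed as a map $M^1\circ g\to M^2\circ g$, is again an isomorphism: for all $a\in V$,
\[
\phi\,Y_{M^1\circ g}(a,z)=\phi\,Y_{M^1}(ga,z)=Y_{M^2}(ga,z)\,\phi=Y_{M^2\circ g}(a,z)\,\phi .
\]
Applying this with $g^{-1}$ gives the converse, so $M\mapsto M\circ g$ is in fact an autoequivalence of the category of $V$-modules.

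For (3), where $V$ is assumed strongly regular so that fusion products are defined, the key point to verify is that $g$-twisting is compatible with intertwining operators: a linear map $\mathcal{Y}(\cdot,z)$ is an intertwining operator of type $\binom{M^3}{M^1\,M^2}$ among $V$-modules $M^1,M^2,M^3$ if and only if the same map (left unchanged) is an intertwining operator of type $\binom{M^3\circ g}{M^1\circ g\,M^2\circ g}$. Indeed, in the defining Jacobi identity the $g$-twisted operators $Y_{M^i\circ g}(a,\cdot)=Y_{M^i}(ga,\cdot)$ occur, so the identity for the twisted modules is literally the original identity with every $a\in V$ replaced by $ga$; since $g$ is a bijection of $V$ this holds exactly when the original does, and the remaining axioms are untouched because $g\omega=\omega$. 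Combined with the autoequivalence from (2), this yields $(M^1\circ g)\boxtimes(M^2\circ g)\cong(M^1\boxtimes M^2)\circ g$. Now if $M$ is a simple current and $L$ is an arbitrary irreducible $V$-module, then $L\circ g^{-1}$ is irreducible by (1), hence $M\boxtimes(L\circ g^{-1})$ is irreducible, and therefore
\[
(M\circ g)\boxtimes L=(M\circ g)\boxtimes\bigl((L\circ g^{-1})\circ g\bigr)\cong\bigl(M\boxtimes(L\circ g^{-1})\bigr)\circ g
\]
is irreducible by (1); as $L$ was arbitrary, $M\circ g$ is a simple current.

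The statement is essentially immediate, and I do not anticipate a genuine obstacle; the only place calling for a little care is part (3), where one must recall that it is strong regularity that makes the fusion product available on irreducible modules, and check — as above — that $g$-twisting respects intertwining operators. Everything else is a direct unravelling of the definition of $M\circ g$ together with the bijectivity of $g$ on $V$.
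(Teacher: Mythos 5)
Your proof is correct and is exactly the routine verification the paper has in mind when it calls the lemma ``immediate'' (no proof is given there): parts (1) and (2) unravel the definition of $M\circ g$, and for (3) the key point you check---that $\circ g$ is compatible with intertwining operators, so $(M^1\circ g)\boxtimes(M^2\circ g)\cong(M^1\boxtimes M^2)\circ g$---is the standard fact implicitly being used.
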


By the lemma above, $g\in\Aut(V)$ acts on $\Irr(V)$ by $W\mapsto W\circ g$.
Note that this action preserves the fusion products and that if $g\in \Inn(V)$ then $W\circ g=W$ for all $W\in \Irr(V)$.

\begin{lemma}\label{L:gact} Let $V$ be a VOA and $U$ a rational full subVOA.
Let $V=\bigoplus_{W\in \Irr(U)}V(W)$ be the isotypic decomposition, where $V(W)$ is the sum of all irreducible $U$-submodules of $V$ that belongs to $W$.
Let $g\in \Aut(V)$ such that $g(U)=U$.
Then for $W\in \Irr(U)$, we have $g(V(W))=V(W\circ g^{-1})$.
\end{lemma}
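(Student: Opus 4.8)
The plan is to unwind the definitions and use the defining intertwining property of a VOA automorphism together with the uniqueness of the isotypic decomposition. First I would fix $W\in\Irr(U)$ and let $M\subset V(W)$ be any irreducible $U$-submodule, so that $M\cong W$ as a $U$-module. The key observation is that $g(M)$, being the image of $M$ under the linear automorphism $g$, is again a graded subspace of $V$, and it carries a $U$-module structure coming from the ambient $V$-action: for $u\in U$ and $m\in M$ we have $u_{(i)}(g m)=g\bigl((g^{-1}u)_{(i)} m\bigr)$, because $gY(a,z)=Y(ga,z)g$ applied with $a=g^{-1}u$ gives $g\,Y(g^{-1}u,z)=Y(u,z)\,g$. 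Since $g(U)=U$, the element $g^{-1}u$ indeed lies in $U$, so this computation makes sense and shows that $g\colon M\to g(M)$ intertwines the $U$-action on $M$ twisted by $g^{-1}$ with the $U$-action on $g(M)$; in other words $g(M)\cong M\circ g^{-1}$ as $U$-modules.

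Next I would feed this through Lemma \ref{L:gcon}: since $M\cong W$ and $M$ is irreducible, $M\circ g^{-1}\cong W\circ g^{-1}$ is again irreducible, hence $g(M)$ is an irreducible $U$-submodule of $V$ isomorphic to $W\circ g^{-1}$, which by definition of the isotypic component means $g(M)\subseteq V(W\circ g^{-1})$. Summing over all irreducible $U$-submodules $M$ of $V(W)$ — these span $V(W)$ by definition — gives $g(V(W))\subseteq V(W\circ g^{-1})$. Applying the same inclusion with $W$ replaced by $W\circ g^{-1}$ and $g$ replaced by $g^{-1}$ yields $g^{-1}(V(W\circ g^{-1}))\subseteq V(W)$, i.e. $V(W\circ g^{-1})\subseteq g(V(W))$. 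Combining the two inclusions gives the desired equality $g(V(W))=V(W\circ g^{-1})$.

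I do not anticipate a serious obstacle here; the statement is essentially a bookkeeping lemma. The only point requiring a little care is the very first step: one must check that the naive ``ambient'' $U$-action makes $g(M)$ into an honest $U$-module (rather than just a twisted one) and correctly identify the twist as $g^{-1}$ versus $g$, which is exactly the content of the definition of $M\circ g$ given just before the statement. A secondary subtlety is that one should note the rationality of $U$ only to guarantee that the isotypic decomposition $V=\bigoplus_{W\in\Irr(U)}V(W)$ exists and is well-defined (so that ``$V(W)$'' makes sense and the components are indexed by $\Irr(U)$); the equality $g(V(W))=V(W\circ g^{-1})$ itself is a consequence of the intertwining property alone, using that $g$ is a bijection preserving gradings. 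I would present the argument in two or three lines of display-free prose, invoking Lemma \ref{L:gcon}(1)--(2) explicitly for the irreducibility and isomorphism-preservation.
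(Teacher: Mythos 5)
Your proposal is correct and follows essentially the same route as the paper: you use the intertwining relation $gY(a,z)=Y(ga,z)g$ to identify $g(M)\cong M\circ g^{-1}$ as $U$-modules for each irreducible $U$-submodule $M$, deduce the inclusion $g(V(W))\subseteq V(W\circ g^{-1})$, and obtain the reverse inclusion by applying the same argument to $g^{-1}$. The only (harmless) difference is cosmetic: the paper phrases the first step as $g$ being an isomorphism $M\to g(M)\circ g$ and then composes with $\circ\, g^{-1}$, whereas you twist on the source; both give the same identification.
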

\begin{proof} Let $M$ be an irreducible $U$-submodule of $V$.
Then $$gY_M(v,z)w=Y_{g(M)}(gv,z)g(w)=Y_{g(M)\circ g}(v,z)g(w),$$
and $g$ is a $U$-module isomorphism from $M$ to $g(M)\circ g$.
Hence $M\circ g^{-1}$ is isomorphic to $g(M)$ as $U$-modules, and $g(V(W))\subset V(W\circ g^{-1})$ for $W\in\Irr(U)$. Replacing $W$ by $W\circ g$ and $g$ by $g^{-1}$, we obtain the opposite inclusion.
\end{proof}

\section{Lattice VOAs $V_L$, subVOAs $V_L^+$ and simple affine VOAs}

In this section, we review properties of lattice VOAs $V_L$, subVOAs $V_L^+$ and simple affine VOAs.

\subsection{Lattice VOAs}
Let $L$ be an even lattice of rank $r$ and let $(\cdot |\cdot )$ be a positive-definite symmetric bilinear form on $\R\otimes_\Z L\cong\R^r$.
The lattice VOA $V_L$ associated with
$L$ is defined to be $M(1) \otimes_\C \C\{L\}$ (\cite{FLM}). 
Here $M(1)$ is the Heisenberg VOA associated with $\mathfrak{h}=\C\otimes_\Z L$ and the form $(\cdot|\cdot)$ extended $\C$-bilinearly, and $\C\{L\}=\bigoplus_{\alpha\in L}\C e^\alpha$ is the twisted group algebra with the commutator relation $e^\alpha e^\beta=(-1)^{(\alpha|\beta)}e^{\beta}e^{\alpha}$ ($\alpha,\beta\in L$).
It is well-known that the lattice VOA $V_L$ is strongly regular, and its central charge is equal to $r$, the rank of $L$.

Let $\hat{L}$ be the central extension of $L$ by $\langle-1\rangle\cong\Z_2$ associated with the commutator relation above.
Let $\Aut(\hat{L})$ be the set of all group automorphisms of $\hat L$.
For $g\in \Aut (\hat{L})$, we define the element $\bar{g}\in\Aut (L)$ by $g(e^\alpha)\in\{\pm e^{\bar{g}(\alpha)}\}$, $\alpha\in L$.
Set $$O(\hat{L})=\{g\in\Aut(\hat L)\mid \bar{g}\in O(L)\}.$$
We often identify $\Hom(L,\Z_2)$ with $\{f_u\mid u\in L^*/2L^*\}$, where 
\begin{equation}
f_u:L\to \{\pm1\}\cong\Z_2,\quad f_u(\alpha)=(-1)^{(u|\alpha)}.\label{Eq:hom}
\end{equation}
Note that $\Hom(L,\Z_2)\cong \Z_2^r$.
For $f_u\in\Hom(L,\Z_2)$, the map $\hat{L}\to\hat{L}$, $e^{\alpha}\mapsto f_u(\alpha)e^{\alpha}$ is an element of $O(\hat{L})$; we view $\Hom(L,\Z_2)$ as a subgroup of $O(\hat{L})$.
It was proved in \cite[Proposition 5.4.1]{FLM} that the following sequence is exact:
\begin{equation}
1 \longrightarrow \mathrm{Hom}(L, \Z_2) { \longrightarrow}
O(\hat{L}) \bar\longrightarrow O(L)\longrightarrow  1.\label{Exact1}
\end{equation}
We view $O(\hat{L})$ as a subgroup of $\Aut (V_L)$ by the following way: for $g\in O(\hat{L})$, 
$$\alpha_1(-n_1)\dots\alpha_m(-n_m)\otimes e^\beta\mapsto \bar{g}(\alpha_1)(-n_1)\dots\bar{g}(\alpha_m)(-n_m)\otimes g(e^\beta)$$
is an automorphism of $V_L$, 
where $n_1,\dots,n_m\in\Z_{>0}$ and $\alpha_1,\dots,\alpha_m,\beta\in L$.
We often identify $\mathfrak{h}$ with $\mathfrak{h}(-1)\1$ via $h\mapsto h(-1)\1$.
Then $f_u=\sigma_{u/2}$ for $u\in L^*$; for the definition of $\sigma_{u/2}$, see Section 2.3.
Hence $\Hom(L,\Z_2)\subset\Inn(V_L)$.
It was proved in \cite[Theorem 2.1]{DN} that $\Aut (V_L)$ is generated by the normal subgroup $\Inn(V_L)$ and the subgroup $O(\hat L)$.

\subsection{VOAs $V_L^+$}
Let $V_L$ be the lattice VOA associated with an even lattice $L$.
Let $\theta$ be an element in $O(\hat{L})$ such that $\bar{\theta}=-1\in O(L)$.
Note that the order of $\theta$ is $2$ and that $\theta$ is unique up to conjugation by elements in $\Aut(V_L)$ (\cite[Appendix D]{DGH}).
Let $V_L^+$ denote the fixed-point subspace of $\theta$.
Then $V_L^+$ is a full subVOA and it is strongly regular (\cite{ABD,DJL12}).
The irreducible $V_L^+$-modules were classified in \cite{AD} as follows:

\begin{theorem}[{\cite[Theorem 7.7]{AD}}]\label{T:irrVQ+} 
Any irreducible $V_L^+$-module is isomorphic to $V_{\mu+L}^\pm$ $(\mu\in L^*\cap(L/2))$, $V_{\nu+L}$ $(\nu\in L^*\setminus (L/2))$ or $V_L^{T_\chi,\pm}$,
where $T_\chi$ is an irreducible module for the group $\hat{L}/\{a^{-1}\theta(a)\mid a\in\hat{L}\}$ with central character $\chi$.
\end{theorem}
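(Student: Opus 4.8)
\medskip

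\noindent\textbf{Proof proposal.}
The plan is to exhibit all irreducible $V_L^+$-modules coming from the two natural sources --- the irreducible (untwisted) $V_L$-modules and the irreducible $\theta$-twisted $V_L$-modules --- by decomposing each of them under the order-two automorphism $\theta$, and then to show that the resulting list is exhaustive. First I would recall that the irreducible $V_L$-modules are exactly the $V_{\mu+L}$ with $\mu$ running over $L^*/L$. Since $\bar\theta=-1$, the automorphism $\theta$ sends $V_{\mu+L}$ to $V_{-\mu+L}$. If $2\mu\in L$, i.e.\ $\mu\in L^*\cap(L/2)$, then $V_{-\mu+L}=V_{\mu+L}$ as subspaces, a suitable lift of $\theta$ acts on $V_{\mu+L}$ as an involution, and $V_{\mu+L}=V_{\mu+L}^+\oplus V_{\mu+L}^-$ as $V_L^+$-modules; taking $\mu=0$ gives $V_L=V_L^+\oplus V_L^-$. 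If $2\mu\notin L$, then $V_{\mu+L}\not\cong V_{-\mu+L}$ as $V_L$-modules, $V_{\mu+L}$ stays irreducible over $V_L^+$, and $V_{\mu+L}\cong V_{-\mu+L}$ there; these are the $V_{\nu+L}$ with $\nu\in L^*\setminus(L/2)$. For the twisted sector I would use the standard construction of $\theta$-twisted $V_L$-modules: they are the $V_L^{T_\chi}$, indexed by the irreducible characters $\chi$ of the finite group $\hat L/\{a^{-1}\theta(a)\mid a\in\hat L\}$ that restrict to $-1\mapsto-1$ on the center, and the canonical lift of $\theta$ splits $V_L^{T_\chi}$ into $V_L^{T_\chi,+}\oplus V_L^{T_\chi,-}$. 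These are precisely the modules in the statement.

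\medskip

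\noindent\emph{Irreducibility and distinctness.} For $2\mu\notin L$, $V_{\mu+L}$ is irreducible over $V_L^+$ by the $\Z_2$ quantum-Galois principle (Dong--Mason): since $V_L$ is a $\Z_2$-graded simple current extension of $V_L^+$ and $V_{\mu+L}\not\cong V_{\mu+L}\circ\theta$, the module $V_{\mu+L}$ cannot acquire a proper nonzero $V_L^+$-submodule. For the split modules I would run the complementary Schur argument: on an irreducible untwisted or $\theta$-twisted $V_L$-module the group $\langle\theta\rangle\cong\Z_2$ acts, the $V_L^+$-isotypic components are exactly its $\pm1$-eigenspaces, and each is $V_L^+$-irreducible because $V_L^+$ is rational and $C_2$-cofinite (\cite{ABD,DJL12}), so the relevant endomorphism algebras are finite-dimensional and the $\Z_2$-action is multiplicity-free. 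Pairwise non-isomorphisms among the candidates then follow from comparing conformal weights, $\theta$-eigenvalues, and central characters.

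\medskip

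\noindent\emph{Completeness, and the main obstacle.} The real work is showing there are no further irreducible $V_L^+$-modules, and I would reduce this to the rank-one case. Choose pairwise orthogonal $\alpha_1,\dots,\alpha_r\in L$ spanning $\R\otimes_\Z L$ (they exist by induction on the rank) and set $M=\bigoplus_i\Z\alpha_i\subset L$, a finite-index even sublattice for which $V_M=\bigotimes_iV_{\Z\alpha_i}$ is a full subVOA of $V_L$ and $\theta$ restricts to a lift of $-1$ on $V_M$. Then $V_M^+=V_M\cap V_L^+$ is a full, rational, $C_2$-cofinite subVOA of $V_L^+$, and its irreducible modules are known from the classification of irreducible $V_{\Z\alpha}^+$-modules together with the behaviour of fixed-point subalgebras of tensor products (equivalently, from the known classification of irreducible modules for the rank-$r$ algebra $M(1)^+$). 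Given an arbitrary irreducible $V_L^+$-module $W$, restricting it to $V_M^+$ forces its irreducible constituents into this known list, which pins down the conformal weights and the $M(1)^+$-content of $W$ tightly enough that, after matching against the candidates above and using that $W$ is reconstructed from its $V_M^+$-structure together with the finite abelian glue data $L/M$, $W$ must be one of the listed modules. (Alternatively, granting the general orbifold machinery --- rationality of $V_L^+$ and the description of $\Irr(V_L^+)$ via the $\theta^i$-twisted $V_L$-modules --- completeness is immediate from the explicit enumeration above, though this route is anachronistic relative to \cite{AD} and risks circularity.) I expect this descent to be the hard part: one must carefully control how modules of the rank-one factors $V_{\Z\alpha_i}^+$ glue along $M\subset L$ and interact with $\theta$, and the base case on which everything rests --- the structure theory and module classification of $V_{\Z\alpha}^+$ and $M(1)^+$ --- is itself a substantial computation; by contrast the construction and irreducibility of the candidates are essentially formal.
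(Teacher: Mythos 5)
This statement is not proved in the paper at all: it is quoted verbatim from Abe--Dong \cite[Theorem 7.7]{AD}, so there is no internal argument to compare yours against. Measured against the actual proof in the cited source, your outline is pointed in the right direction: the construction of the candidate modules (splitting $V_{\mu+L}$ into $\theta$-eigenspaces when $2\mu\in L$, noting $V_{\mu+L}\cong V_{-\mu+L}$ stays irreducible over $V_L^+$ when $2\mu\notin L$, and taking $\pm$-eigenspaces of the $\theta$-twisted modules $V_L^{T_\chi}$ indexed by central characters of $\hat L/\{a^{-1}\theta(a)\}$) is exactly the standard enumeration, and your identification of completeness as the real content is correct. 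One small repair on the formal side: irreducibility of the $\pm$-eigenspaces is most cleanly obtained from Dong--Mason quantum Galois theory for the $\Z_2$-action, not from rationality/$C_2$-cofiniteness of $V_L^+$ (the higher-rank rationality result you cite postdates the classification and is closer to being a consequence of it, a circularity you yourself flag in the other direction).

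The genuine gap is that the completeness step is only a plan. Abe--Dong's proof does proceed by descent to known building blocks, but the descent is organized around the full-rank Heisenberg orbifold $M(1)^+$ (higher-rank classification of Dong--Nagatomo) and the rank-one algebras $V_{\Z\alpha}^+$: one first shows that an arbitrary irreducible $V_L^+$-module decomposes into $M(1)^+$-modules all of untwisted type or all of twisted type, and then in each case reconstructs the module from its top level as a module for an associative algebra built from $\hat L$ (untwisted case) respectively from the central character data of $\hat L/\{a^{-1}\theta(a)\}$ (twisted case). Your sketch asserts that restricting to $V_M^+$ for an orthogonal full-rank sublattice $M$ ``pins down'' $W$ and that $W$ ``is reconstructed from its $V_M^+$-structure together with the glue data $L/M$,'' but this reconstruction is precisely the hard multi-section argument of \cite{AD} and cannot be waved through: one must rule out mixing of untwisted and twisted constituents, control multiplicities, and show the $\hat L$-module structure on the multiplicity spaces is irreducible with the stated central character. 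So the proposal is a sound road map consistent with the cited proof, but as written it does not constitute a proof of the theorem.
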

For simplicity, we use the following notations for elements of $\Irr(V_L^+)$:
\begin{align}
V_{\mu+L}^\pm\in (\mu)^\pm\ (\mu\in L^*\cap(L/2)),\quad V_{\nu+L}\in (\nu)\ (\nu\in L^*\setminus (L/2)),\quad
V_L^{T_\chi,\pm}\in (\chi)^\pm.\label{Eq:Not}
\end{align}
The irreducible $V_L^+$-modules $V_{\mu+L}^\pm$ and $V_{\nu+L}$ (resp. $V_L^{T_\chi,\pm}$) are said to be of \emph{untwisted type} (resp. of \emph{twisted type}).
Note that $(\nu)=(-\nu)$ for $\nu\in L^*\setminus (L/2)$.
The following lemma is straightforward.

\begin{lemma}\label{L:decuntw} Let $L$ be an even lattice and $Q$ its sublattice.
Assume that $L$ and $Q$ have the same rank.
Let $Q=\bigoplus_{i=1}^tQ_i$ be the orthogonal sum of indecomposable sublattices.
Then for any irreducible $V_L^+$-module of untwisted (resp. twisted) type, any irreducible $\bigotimes_{i=1}^t V_{Q_i}^+$-submodule is isomorphic to the tensor product of irreducible $V_{Q_i}^+$-modules of untwisted (resp. twisted) type.
\end{lemma}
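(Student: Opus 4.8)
The plan is to exploit that, because $\mathrm{rank}\,Q=\mathrm{rank}\,L$, the lattice VOA $V_Q=\bigotimes_{i=1}^t V_{Q_i}$ is a \emph{full}, $\theta$-stable subVOA of $V_L$: the equality $\mathbb{C}\otimes_\mathbb{Z}Q=\mathbb{C}\otimes_\mathbb{Z}L$ makes $V_Q$ and $V_L$ share the Heisenberg subVOA $M(1)$, hence the conformal vector; $\bar\theta=-1$ preserves $Q$, so $\theta$ restricts to $V_Q$; and since $\theta|_{\hat Q}$ preserves each subgroup $\hat{Q_i}$, the restriction $\theta|_{V_Q}$ is the product $\bigotimes_i\theta_{Q_i}$ of $-1$-lifts $\theta_{Q_i}$ on the $V_{Q_i}$. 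Thus $\bigotimes_i V_{Q_i}^+\subset V_Q^+$, and it is enough to decompose the relevant irreducible $V_L$-modules and $\theta$-twisted $V_L$-modules along the chain $V_L\supset V_Q=\bigotimes_i V_{Q_i}\supset\bigotimes_i V_{Q_i}^+$.

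By Theorem~\ref{T:irrVQ+} and its proof, every irreducible $V_L^+$-module $M$ of untwisted type is a $V_L^+$-submodule of some irreducible $V_L$-module $V_{\lambda+L}$ with $\lambda\in L^*$, and every one of twisted type is a $V_L^+$-submodule of some irreducible $\theta$-twisted $V_L$-module $V_L^{T_\chi}\;(=V_L^{T_\chi,+}\oplus V_L^{T_\chi,-})$; the same holds factorwise for each $V_{Q_i}^+$. Since the irreducible $\bigotimes_i V_{Q_i}^+$-constituents of $M$ are among those of the ambient module, it suffices to show that $V_{\lambda+L}$ and $V_L^{T_\chi}$ restrict to direct sums of tensor products of appropriately typed modules. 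The untwisted case is a direct lattice computation: as a $V_Q$-module $V_{\lambda+L}=\bigoplus_{\mu\in L/Q}V_{\lambda+\widetilde\mu+Q}$, and writing $Q^*=\bigoplus_i Q_i^*$, each summand equals $\bigotimes_i V_{\gamma_i+Q_i}$ with $\gamma_i\in Q_i^*$; restricting a tensor factor to $V_{Q_i}^+$ yields $(\gamma_i)$ or $(\gamma_i)^+\oplus(\gamma_i)^-$, i.e.\ a sum of irreducible $V_{Q_i}^+$-modules of untwisted type. Hence $V_{\lambda+L}$, and with it $M$, restricted to $\bigotimes_i V_{Q_i}^+$ is a direct sum of tensor products of untwisted-type modules.

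For the twisted case the same factorization is applied after restricting $V_L^{T_\chi}$ to $V_Q$. Realizing the $\theta$-twisted $V_L$-module as a $\theta$-twisted Heisenberg Fock space tensored with the $\hat L/\{a^{-1}\theta(a)\}$-module $T_\chi$, the Fock space factors as $\bigotimes_i$(twisted Heisenberg Fock space for $\mathbb{C}\otimes_\mathbb{Z}Q_i$) since $Q=\bigoplus_iQ_i$, and on restriction $T_\chi$ breaks, over the product $\prod_i\hat{Q_i}/\{a^{-1}\theta_{Q_i}(a)\}$, into a sum of tensor products $\bigotimes_iT_{\chi_i}$; thus $V_L^{T_\chi}$ restricted to $V_Q$ is a direct sum of modules $\bigotimes_i V_{Q_i}^{T_{\chi_i}}$, each factor a $\theta_{Q_i}$-twisted $V_{Q_i}$-module whose restriction to $V_{Q_i}^+$ is $(\chi_i)^+\oplus(\chi_i)^-$, a sum of twisted-type modules. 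Consequently $M\subset V_L^{T_\chi}$ restricted to $\bigotimes_i V_{Q_i}^+$ is a direct sum of tensor products of twisted-type modules. (Alternatively, one may restrict $V_L^{T_\chi}$ to $V_Q$ abstractly---obtaining a $\theta|_{V_Q}$-twisted $V_Q$-module, semisimple because $V_Q$ is rational and $C_2$-cofinite---and invoke the Künneth-type description of twisted modules for a tensor product VOA under a diagonal finite-order automorphism.) The step needing the most care is precisely this twisted-sector restriction, i.e.\ verifying that the $\theta$-twisted $V_L$-module factorizes through $V_Q=\bigotimes_iV_{Q_i}$; the untwisted sector and the passage from $V_{Q_i}$ to $V_{Q_i}^+$ are routine.
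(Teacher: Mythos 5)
Your argument is correct, and it is essentially the routine verification the paper has in mind when it states the lemma without proof ("straightforward"): embed the untwisted-type (resp.\ twisted-type) irreducible $V_L^+$-module in $V_{\lambda+L}$ (resp.\ $V_L^{T_\chi}$) and restrict along $V_L\supset V_Q=\bigotimes_i V_{Q_i}\supset\bigotimes_i V_{Q_i}^+$, using $Q^*=\bigoplus_i Q_i^*$ in the untwisted sector and the factorization of the twisted Fock space together with the central-product decomposition of $T_\chi$ in the twisted sector. The two points needing care --- that $\theta$ restricts to $\bigotimes_i\theta_{Q_i}$ on $V_Q$ and that the twisted sector factorizes accordingly --- are exactly the ones you address, so no gap remains.
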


By the fusion rules of $\Irr(V_L^+)$ determined in \cite{ADL}, we obtain the following:

\begin{proposition}\label{P:ADL} Let $M$ be an irreducible $V_L^+$-module of untwisted type.
Then $M$ is a simple current module if and only if $M\in (\mu)^\varepsilon$ for some $\mu\in L^*\cap (L/2)$ and $\varepsilon\in\{\pm\}$.
\end{proposition}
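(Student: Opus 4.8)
The plan is to read off the answer directly from the classification of irreducible $V_L^+$-modules in Theorem~\ref{T:irrVQ+} together with the fusion rules computed in \cite{ADL}. An irreducible $V_L^+$-module of untwisted type is, by \eqref{Eq:Not}, one of the modules $(\mu)^\pm$ with $\mu\in L^*\cap(L/2)$ or $(\nu)$ with $\nu\in L^*\setminus(L/2)$. So the statement amounts to two assertions: every $(\mu)^\pm$ is a simple current, and no $(\nu)$ is a simple current.

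For the first assertion, I would fix $\mu\in L^*\cap(L/2)$ and $\varepsilon\in\{\pm\}$ and show that the fusion product of $(\mu)^\varepsilon$ with an arbitrary irreducible $V_L^+$-module $M$ is again irreducible. By the fusion rules in \cite{ADL}, tensoring by $(0)^+$ (the VOA itself) is trivial, and tensoring by $(0)^-$ or by a general $(\mu)^\pm$ with $2\mu\in L$ permutes the irreducible modules: concretely, $(\mu)^\varepsilon\boxtimes(\lambda)^\delta$, $(\mu)^\varepsilon\boxtimes(\nu)$, and $(\mu)^\varepsilon\boxtimes(\chi)^\delta$ are each single irreducible modules (roughly, $(\mu+\lambda)^{\pm}$, $(\mu+\nu)$, and a twisted module $(\chi')^{\pm}$ respectively), because the $1/2$-lattice cosets add and the sign labels combine in a determined way. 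Citing the relevant fusion-rule tables from \cite{ADL} for all three types of second argument gives that $(\mu)^\varepsilon$ is a simple current.

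For the converse, I would take $\nu\in L^*\setminus(L/2)$ and exhibit a single irreducible module whose fusion product with $(\nu)$ is decomposable. The natural choice is $(0)^-$ (or more generally any $(\mu)^-$): the fusion rules in \cite{ADL} give $(\nu)\boxtimes(0)^- \cong (\nu)$ is irreducible, so instead I should fuse $(\nu)$ with itself or with $(\chi)^\pm$. The cleanest obstruction is $(\nu)\boxtimes(-\nu)=(\nu)\boxtimes(\nu)$, which by the fusion rules contains both $(0)^+$ and $(0)^-$ as $V_L^+$-submodules — equivalently, on the level of $V_L$-modules $V_{\nu+L}\boxtimes V_{-\nu+L}=V_L$, and $V_L = (0)^+\oplus(0)^-$ as a $V_L^+$-module, with both summands actually appearing in the $V_L^+$-fusion product since $\nu\notin L/2$ means $\theta$ acts nontrivially on the relevant intertwining spaces. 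Hence $(\nu)$ is not a simple current.

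The main obstacle is bookkeeping rather than conceptual: one must be careful that the fusion rules of \cite{ADL} are stated for a single even lattice $L$ and cover all pairings among the three families (untwisted $(\mu)^\pm$ and $(\nu)$, and twisted $(\chi)^\pm$), and that the sign/character labels in the twisted sector combine correctly when one of the factors is $(\mu)^\varepsilon$; I would organize the first half of the proof as a short case analysis over the type of the second module, quoting the appropriate line of the fusion tables in each case, and the second half as the single explicit computation above. No genuinely hard step is expected.
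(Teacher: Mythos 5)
Your proposal is correct and follows essentially the same route as the paper, which proves this proposition simply by reading it off from the fusion rules of $\Irr(V_L^+)$ determined in \cite{ADL}. Your case analysis for $(\mu)^\varepsilon$ and the observation that $(\nu)\boxtimes(\nu)$ contains both $(0)^+$ and $(0)^-$ (hence is reducible) for $\nu\in L^*\setminus(L/2)$ is exactly the intended fleshing-out of that citation.
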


Since $\theta$ is a central element of $O(\hat{L})$ and $\{g\in O(\hat{L})\mid g=id\ {\rm on}\ V_L^+\}=\langle\theta\rangle\cong\Z_2$, we have $O(\hat{L})/\langle\theta\rangle\subset \Aut(V_L^+)$.
By \eqref{Exact1}, we obtain the following exact sequence:
\begin{equation}
1 \longrightarrow \mathrm{Hom}(L, \Z_2) { \longrightarrow}
O(\hat{L})/\langle\theta\rangle \bar\longrightarrow O(L)/\langle-1\rangle\longrightarrow  1.\label{Exact2}
\end{equation}
The actions of $O(\hat{L})/\langle\theta\rangle$ on the subsets $\{(\mu)^\pm\mid \mu\in L^*\cap (L/2)\}$ and $\{(\nu)\mid \nu\in L^*\setminus(L/2)\}$ of $\Irr(V_L^+)$ are described in \cite[Lemma 1.7]{Sh06} (cf.\ \cite[Proposition 2.9]{Sh04}).
In particular, we have the following:

\begin{lemma}\label{L:Hommod} For $u\in L^*$ and $\mu\in L^*\cap (L/2)$, $$(\mu)^\pm\circ f_u=\begin{cases}(\mu)^\pm& {\rm if}\ (\mu|u)\in \Z\\ (\mu)^\mp,& {\rm if}\ (\mu|u)\in \frac{1}{2}+\Z.\end{cases}$$
\end{lemma}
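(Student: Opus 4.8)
The statement to prove is Lemma~\ref{L:Hommod}: for $u\in L^*$ and $\mu\in L^*\cap(L/2)$, the module $(\mu)^\pm\circ f_u$ equals $(\mu)^\pm$ if $(\mu|u)\in\Z$ and $(\mu)^\mp$ if $(\mu|u)\in\frac12+\Z$.

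\medskip

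The plan is to reduce everything to a direct computation with the explicit realization of the irreducible $V_L^+$-module $V_{\mu+L}^\pm$ inside $V_{\mu+L}$ (for $2\mu\in L$) together with the explicit action of $f_u\in\Hom(L,\Z_2)\subset O(\hat L)$ on $V_L$ recalled in Section~3.1. First I would recall that $V_{\mu+L}$ decomposes as a $V_L^+$-module into $V_{\mu+L}^+\oplus V_{\mu+L}^-$, where the superscript records the eigenvalue $\pm1$ of the lift $\theta$ acting on $V_{\mu+L}$; concretely, $\theta$ sends $\alpha_1(-n_1)\cdots\alpha_m(-n_m)\otimes e^\beta$ to $(-1)^m\,\alpha_1(-n_1)\cdots\alpha_m(-n_m)\otimes\theta(e^\beta)$ with $\theta(e^\beta)=\pm e^{-\beta}$, so $V_{\mu+L}^+$ and $V_{\mu+L}^-$ are spanned by suitable $\theta$-symmetric and $\theta$-antisymmetric combinations of vectors supported on $\mu+L$ and $-\mu+L$. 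The key point is that when $2\mu\in L$ these two cosets either coincide (if $\mu\in L$... which is excluded here since then $\mu$ contributes to $V_L^\pm$) or are genuinely related by $\beta\mapsto-\beta$, and $\theta$ interchanges the supports while $f_u$ preserves each support.

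\medskip

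Next I would compute $Y_{V_{\mu+L}\circ f_u}$ and compare it with $Y_{V_{\mu+L}}$. Since $f_u$ acts on $V_L$ by $v\otimes e^\gamma\mapsto (-1)^{(u|\gamma)}\,(v\otimes e^\gamma)$ for $\gamma\in L$ (and trivially on the Heisenberg part), and since $f_u$ fixes $V_L^+$ pointwise only up to the sign $(-1)^{(u|\gamma)}$ which is $+1$ for $\gamma\in L$ when $u\in L^*$ — so in fact $f_u$ is the identity on $V_L$? No: $(u|\gamma)\in\Z$ for $\gamma\in L$, hence $f_u$ acts trivially on $V_L$ itself, consistent with $f_u\in O(\hat L)$ restricting to an automorphism of $V_L$ that is inner and actually fixes $V_L$ when... wait, $f_u=\sigma_{u/2}$ is not the identity on $V_L$ in general. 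Let me instead say: $f_u$ acts on the $V_L^+$-\emph{module} $V_{\mu+L}$, and on a vector supported on the coset $\mu+L$ it acts by the scalar $(-1)^{(u|\mu+\gamma)}=(-1)^{(u|\mu)}(-1)^{(u|\gamma)}$ for $\gamma\in L$; the factor $(-1)^{(u|\gamma)}$ comes from the module action being twisted, and the upshot after untangling $Y_{M\circ f_u}(v,z)=Y_M(f_u v,z)$ for $v\in V_L^+$ is that $M\circ f_u\cong M$ as $V_L^+$-modules via the scalar $(-1)^{(u|\mu)}$ on the $\mu+L$-part — but crucially that scalar is $+1$ on one of the two supports $\mu+L,-\mu+L$ and $(-1)^{(u|\mu)}$ relative to the other, so when $(u|\mu)\in\frac12+\Z$ the isomorphism $V_{\mu+L}\circ f_u\cong V_{\mu+L}$ is given by a map that acts by $+1$ on the $e^{\mu+\gamma}$-part and $-1$ on the $e^{-\mu+\gamma}$-part, which is precisely the map swapping the $\theta$-eigenspaces, i.e.\ $V_{\mu+L}^\pm\circ f_u\cong V_{\mu+L}^\mp$. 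When $(u|\mu)\in\Z$ the isomorphism is a genuine scalar and preserves the $\theta$-eigenspaces, giving $V_{\mu+L}^\pm\circ f_u\cong V_{\mu+L}^\pm$.

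\medskip

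The main obstacle, and the part requiring care, is bookkeeping the cocycle/twisted-group-algebra signs: one must verify that the candidate intertwining map $\phi:V_{\mu+L}\to V_{\mu+L}$ defined by $+1$ on the $\mu+L$-support and $(-1)^{(u|\mu)}$ on the $-\mu+L$-support genuinely commutes with $Y_{V_{\mu+L}}(f_u v,z)$ versus $Y_{V_{\mu+L}}(v,z)$ for all $v\in V_L^+$, i.e.\ that the sign $(-1)^{(u|\gamma)}$ picked up from $f_u v$ (for $v$ supported on $\gamma\in L$) is exactly compensated by the support-shift $\phi$ induces. This is a short but sign-delicate check using the vertex operator formula $Y(a\otimes e^\alpha,z)$ on $V_{\mu+L}$. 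Alternatively — and this is probably cleaner for the write-up — I would simply invoke \cite[Lemma 1.7]{Sh06} (cited right before the lemma), which records the full action of $O(\hat L)/\langle\theta\rangle$ on $\{(\mu)^\pm\}$ and $\{(\nu)\}$, and extract the special case $g=f_u$; the lemma statement explicitly says it ``follows from'' that description, so the proof is just the specialization, and the only thing to spell out is that for $g=f_u$ the permutation of $L^*/2L^*$ induced on the coset label $\mu$ is trivial (since $\bar{f_u}=1\in O(L)$) while the $\pm$-label is flipped exactly according to the parity of $(u|\mu)$. I would write the proof in that second style, citing \cite{Sh06,Sh04}, and include the one-line sign computation above as justification.
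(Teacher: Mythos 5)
Your committed write-up --- specializing \cite[Lemma 1.7]{Sh06} (cf.\ \cite[Proposition 2.9]{Sh04}) to $g=f_u$, noting that $\bar{f_u}=1$ fixes the coset label while the $\pm$-label flips according to whether $(\mu|u)$ is integral --- is exactly what the paper does: the paper offers nothing beyond that citation, so on that route you and the author agree.

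However, the ``one-line sign computation'' you intend to attach as justification rests on a wrong picture and would not survive scrutiny. For $\mu\in L^*\cap(L/2)$ one has $2\mu\in L$, hence $-\mu+L=\mu+L$: there are \emph{not} two distinct supports interchanged by $\theta$. (If $\mu+L\neq-\mu+L$, i.e.\ $2\mu\notin L$, then $V_{\mu+L}$ remains irreducible over $V_L^+$ and carries no $\pm$-decomposition at all --- that is the $(\nu)$-type case.) Consequently your candidate intertwiner, defined as ``$+1$ on the $e^{\mu+\gamma}$-part and $(-1)^{(\mu|u)}$ on the $e^{-\mu+\gamma}$-part,'' is not well defined, and $(-1)^{(\mu|u)}$ itself is meaningless as a sign when $(\mu|u)\in\frac12+\Z$. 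The correct direct argument goes as follows: define $\phi\colon V_{\mu+L}\circ f_u\to V_{\mu+L}$ to be multiplication by $e^{\pi\sqrt{-1}(u|\lambda)}$ on the weight-$\lambda$ component ($\lambda\in\mu+L$). Since $(u|\gamma)\in\Z$ for $\gamma\in L$, one checks $e^{\pi\sqrt{-1}(u|\lambda+\gamma)}(-1)^{(u|\gamma)}=e^{\pi\sqrt{-1}(u|\lambda)}$, so $\phi$ intertwines $Y(f_uv,z)$ with $Y(v,z)$; and conjugation by $\phi$ multiplies the lift of $\theta$ on $V_{\mu+L}$ by $e^{-2\pi\sqrt{-1}(u|\mu)}$ (the scalar $e^{-2\pi\sqrt{-1}(u|\lambda)}$ is constant on the coset). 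Hence $\phi$ preserves the $\theta$-eigenspaces when $(\mu|u)\in\Z$ and exchanges them when $(\mu|u)\in\frac12+\Z$; note that in the latter case $\phi$ takes the values $\pm\sqrt{-1}$, not $\pm1$, and it is not constant on the single coset $\mu+L$. With this correction your hands-on check becomes a valid alternative proof; without it, rely solely on the citation, as the paper does.
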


Now, we consider $\Aut(V_Q^+)$ for a root lattice $Q$.

\begin{lemma}\label{L:DE} For a root lattice $Q$, the following are equivalent;
\begin{enumerate}[{\rm (1)}]
\item there exist $W\in \Irr(V_Q^+)$ of untwisted type and $g\in\Aut(V_Q^+)$ such that $W\circ g$ is of twisted type;
\item $Q\cong A_7$, $D_8$, $E_8$ or $D_n\oplus D_n$ $(n\ge2)$, where $D_2=A_1\oplus A_1$ and $D_3=A_3$.
\end{enumerate}
\end{lemma}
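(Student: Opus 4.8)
\emph{Setup.} This is an equivalence, and I plan to prove the two implications separately; the harder direction is $(1)\Rightarrow(2)$. Three inputs will be used throughout. First, any $g\in\Aut(V_Q^+)$ fixes the conformal vector, so $M$ and $M\circ g$ have the same conformal weight for every irreducible $V_Q^+$-module $M$, and by Lemma~\ref{L:gcon} the map $W\mapsto W\circ g$ is an automorphism of the fusion algebra of $V_Q^+$. Second, the identification from Section~4: for an indecomposable root lattice $R\not\cong A_1$, the VOA $V_R^+$ is the simple affine VOA $L_{\mathfrak g_R}(k_R,0)$ with $\mathfrak g_R=(V_R^+)_1$, and comparing central charges one obtains $\mathfrak g_{A_\ell}\cong\mathfrak{so}_{\ell+1}$ at level $2$, $\mathfrak g_{D_m}\cong\mathfrak{so}_m\oplus\mathfrak{so}_m$ at level $1$ (so $V_{D_m}^+\cong L_{\mathfrak{so}_m}(1,0)^{\otimes2}$), $\mathfrak g_{E_6}\cong\mathfrak{sp}_8$ at level $1$, $\mathfrak g_{E_7}\cong\mathfrak{sl}_8$ at level $1$, $\mathfrak g_{E_8}\cong\mathfrak{so}_{16}$ at level $1$ (so $V_{E_8}^+\cong L_{\mathfrak{so}_{16}}(1,0)$); this makes $\Irr(V_R^+)$ and $\Aut(V_R^+)$ (inner automorphisms, which fix every module, together with diagram automorphisms of $\mathfrak g_R$) completely explicit. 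Third, writing $Q=\bigoplus_{i=1}^tQ_i$ into indecomposables and noting that $V_Q^+\subseteq V_Q$ is of untwisted type, Lemma~\ref{L:decuntw} partitions $\Irr(V_Q^+)$ into the modules of untwisted and of twisted type according to the types of their $\bigotimes_iV_{Q_i}^+$-constituents, and since $g$ preserves the subVOA $U:=\langle(V_Q^+)_1\rangle=\bigotimes_iV_{Q_i}^+$, Lemma~\ref{L:gact} transfers the problem to the action of $g|_U$ on $\Irr(\bigotimes_iV_{Q_i}^+)$.

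\emph{Proof of $(2)\Rightarrow(1)$.} For each $Q$ in the list I will exhibit a suitable $g$. For $Q=A_7$: here $V_{A_7}^+\cong L_{D_4}(2,0)$, and an order-$3$ triality diagram automorphism of $D_4$ gives $g\in\Aut(V_{A_7}^+)$; comparing conformal weights of the permuted irreducibles shows the set of untwisted modules is not triality-invariant, so $g$ moves some untwisted module to a twisted one. For $Q=E_8$: since $V_{E_8}^+\cong V_{D_8}$, the $D_8$ diagram automorphism lies in $\Aut(V_{E_8}^+)$ and swaps the two irreducibles of conformal weight $1$, which are $V_{E_8}^-$ (untwisted) and the twisted module $V_{E_8}^{T,+}$. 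For $Q=D_8$: $V_{D_8}^+\cong L_{D_4}(1,0)^{\otimes2}=V_{D_4}^{\otimes2}$, and a triality of one $V_{D_4}$-factor (or the transposition of the two factors) gives an automorphism that does not respect the tensor-factor grouping induced by the realization $D_8\supset$; by Lemma~\ref{L:decuntw} such an automorphism necessarily mixes the two module types. For $Q=D_n\oplus D_n$: since $V_{D_n}^+\cong L_{\mathfrak{so}_n}(1,0)^{\otimes2}$ and $V_{D_n}^-$ restricts to the all-vector module, $V_{D_n\oplus D_n}^+$ is an extension of $L_{\mathfrak{so}_n}(1,0)^{\otimes4}$ by the permutation-symmetric module $L(\Lambda_1)^{\otimes4}$, so the symmetric group on the four $\mathfrak{so}_n$-factors acts on $V_{D_n\oplus D_n}^+$; a transposition exchanging one $\mathfrak{so}_n$-factor of the first $V_{D_n}^+$-block with one of the second does not preserve the grouping $\{1,2\}\{3,4\}$ coming from the realization, and (again by Lemma~\ref{L:decuntw}, via the explicit untwisted/twisted pattern of $L_{\mathfrak{so}_n}(1,0)^{\otimes2}$-modules) it carries a twisted $V_{D_n\oplus D_n}^+$-module to an untwisted one. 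In every case we obtain $W\in\Irr(V_Q^+)$ of untwisted type with $W\circ g$ of twisted type.

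\emph{Proof of $(1)\Rightarrow(2)$.} First suppose $Q=R$ is indecomposable. Using the model $L_{\mathfrak g_R}(k_R,0)$, I will list the conformal weights of all irreducible $V_R^+$-modules: the untwisted ones are quarter-integers determined by norms of vectors in $R^*\cap(R/2)$ and in $R^*\setminus(R/2)$, while each twisted one is congruent to $\operatorname{rank}(R)/16$ modulo $\tfrac14\Z$, with a short explicit list of possible fractional parts read off from the level-$k_R$ affine data; and $\Aut(V_R^+)$ acts on $\Irr(V_R^+)$ only through inner automorphisms (fixing every module) and diagram automorphisms of $\mathfrak g_R$. A type-mixing $g$ thus forces $\mathfrak g_R$ to have a diagram automorphism acting nontrivially, with the untwisted/twisted partition not invariant under it; running through $A_\ell$, $D_m$, $E_6$, $E_7$, $E_8$, this happens exactly for $R=A_7$, $R=D_8$ and $R=E_8$ (for $E_7$, although $\mathfrak{sl}_8$ has a diagram automorphism, the conformal weights split the two types into an invariant partition; for $E_6$ and the remaining $D_m$, the relevant automorphisms likewise preserve the partition, or there is none). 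Now suppose $t\ge2$. Transferring to $U=\bigotimes_iV_{Q_i}^+$ and applying Lemma~\ref{L:decuntw}, a type-mixing $g\in\Aut(V_Q^+)$ restricts on $U$ to an automorphism which, composed with the factor permutation and the diagram automorphisms of the $\mathfrak g_{Q_i}$ and subject to compatibility with the extension $U\subseteq V_Q^+$, must carry $\bigotimes_i(\text{untwisted }V_{Q_i}^+\text{-module})$ to $\bigotimes_i(\text{twisted }V_{Q_i}^+\text{-module})$. Using the explicit models above, a short case check shows this is possible only when every $Q_i$ is a fixed $D_n$ and $t=2$: only there does each $V_{Q_i}^+$ split further as $L_{\mathfrak{so}_n}(1,0)^{\otimes2}$, so that two blocks' worth of tensor factors can be repackaged by a block-crossing permutation — for indecomposable components of any other type (in particular $A_7$ or $E_8$) no such repackaging exists. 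Hence $Q$ lies in the list.

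\emph{Main obstacle.} The delicate part is the conclusion of $(1)\Rightarrow(2)$: one must determine, for the relevant decomposable $Q$, exactly which $L_{\mathfrak{so}_n}(1,0)^{\otimes2}$-modules are of untwisted versus twisted type, verify that a block-crossing permutation of $L_{\mathfrak{so}_n}(1,0)^{\otimes4}$ genuinely mixes the two types for $D_n\oplus D_n$, and rule out any comparable phenomenon for the other decomposable root lattices. The conformal-weight inventory of the twisted $V_R^+$-modules (needed to separate types in the indecomposable cases) and the explicit extra automorphisms in the cases $D_8$, $D_n\oplus D_n$ and $E_8$ are the other main ingredients.
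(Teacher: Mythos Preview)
Your approach is genuinely different from the paper's, and as written it leaves a real gap in the direction $(1)\Rightarrow(2)$ for decomposable $Q$.

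The paper does not use the affine identifications here at all. After dispatching $E_8$ via $V_{E_8}^+\cong V_{D_8}$, it invokes \cite[Proposition~3.10]{Sh04} and \cite[Lemma~1.11, Corollary~4.4]{Sh06}: condition (1) holds precisely when $Q$ arises by Construction~B from a doubly even binary code $C$. Since a root lattice is generated by norm-$2$ vectors, $C$ must be generated by weight-$4$ codewords, and \cite[Theorem~6.5]{PS75} classifies such codes as $d_{2n}$, the Hamming $[7,3]$ code, or the extended Hamming $[8,4]$ code; Construction~B on these yields exactly $D_n\oplus D_n$, $A_7$, and $D_8$. Both implications, and in particular all decomposable cases, are thus handled uniformly with no case analysis of $\Aut(V_Q^+)$.

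Your route --- pass to $U=\langle(V_Q^+)_1\rangle=\bigotimes_iV_{Q_i}^+$ and study which automorphisms of the affine model carry all-untwisted tensors to all-twisted ones --- breaks down at two points. First, the setup requires every $Q_i\not\cong A_1$ so that $V_{Q_i}^+$ is affine, but the list in (2) contains $D_2\oplus D_2=A_1^{4}$, which you never treat. Second, and more seriously, the ``short case check'' ruling out every decomposable $Q$ other than $D_n\oplus D_n$ is not short: for each candidate $Q$ you must determine exactly which automorphisms of $U$ extend to $V_Q^+$ (the extension $U\subset V_Q^+$ imposes a compatibility condition on how $V_{Q_i}^-$ is moved), and then verify that none of the extending ones mixes types. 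For instance, for $Q=A_7\oplus A_7$ triality on each factor mixes types at the level of $U$, but it does not preserve the isomorphism class of $V_{A_7}^-\otimes V_{A_7}^-$ and hence does not extend to $V_Q^+$; facts of this kind would have to be established systematically for all root lattices. The ``Main obstacle'' you flag is therefore not a residual technicality but the core of the lemma, and without an organizing principle (such as Construction~B) your argument does not close.
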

\begin{proof} Let $Q$ be a root lattice.
We note that (1) holds if $Q\cong E_8$ since $V_{E_8}^+\cong V_{D_8}$ (cf.\ \cite[Section 3.1]{Sh06}); we assume that $Q\not\cong E_8$.

The assertion (1) holds if and only if $Q$ is obtained by Construction B from a doubly even binary code $C$ (\cite[Proposition 3.10]{Sh04}, \cite[Lemma 1.11, Corollary 4.4]{Sh06}).
Since $Q$ is generated by norm $2$ vectors, $C$ is also generated by weight $4$ cordwords.
Such a doubly even binary code is equivalent to $d_{2n}$ $(n\ge2)$, the Hamming code of length $7$ or the extended Hamming code of length $8$ (\cite[Theorem 6.5]{PS75});
in fact, these binary codes provide the root lattices $D_n\oplus D_n$ $(n\ge2)$, $A_7$ and $D_8$ by Construction B, respectively.
\end{proof}

\begin{remark}\label{R:DE} If $Q$ is $A_7$, $D_8$ or $D_n\oplus D_n$ $(n\ge2)$, then $V_Q^+$ has an extra automorphism of order $2$ as in \cite[Chapter 11]{FLM} that sends some irreducible $V_Q^+$-module of untwisted type to one of twisted type (\cite[Proposition 4.2]{Sh06}).
\end{remark}

The case where $L$ is unimodular was studied in \cite{Sh06}.

\begin{lemma}[{\cite[Proposition 7.3]{Sh06}}]\label{Lem:AutVL+}
Let $L$ be an even unimodular lattice whose rank is at least $16$.
Then $\Aut(V_L^+)\cong C_{\Aut(V_L)}(\theta)/\langle\theta\rangle\cong (C_{\Inn(V_L)}(\theta)O(\hat{L}))/\langle \theta\rangle$.
\end{lemma}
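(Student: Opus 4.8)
The plan is to realize $\Aut(V_L^+)$ as $C_{\Aut(V_L)}(\theta)/\langle\theta\rangle$ by showing that every automorphism of $V_L^+$ lifts to an automorphism of $V_L$ commuting with $\theta$, and then to rewrite that centralizer using the fact (\cite{DN}) that $\Aut(V_L)$ is generated by the normal subgroup $\Inn(V_L)$ and $O(\hat{L})$, together with the centrality of $\theta$ in $O(\hat{L})$.

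First I would introduce the restriction homomorphism $\pi\colon C_{\Aut(V_L)}(\theta)\to\Aut(V_L^+)$, $g\mapsto g|_{V_L^+}$, which is well defined because an automorphism centralizing $\theta$ preserves its fixed-point subVOA $V_L^+$. Its kernel is exactly $\langle\theta\rangle$: if $g$ centralizes $\theta$ and acts as the identity on $V_L^+$, then $g$ restricts to a $V_L^+$-module endomorphism of the irreducible module $V_L^-$ (the $(-1)$-eigenspace of $\theta$), hence is a scalar $\lambda$ on $V_L^-$ by Schur's lemma; since $v_{(i)}w\in V_L^+$ for $v,w\in V_L^-$ and $g$ is trivial there, $\lambda^2=1$, so $g\in\{\id,\theta\}$. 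Thus $C_{\Aut(V_L)}(\theta)/\langle\theta\rangle$ embeds into $\Aut(V_L^+)$.

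The crux is surjectivity of $\pi$. Given $g\in\Aut(V_L^+)$, I would show $V_L^-\circ g\cong V_L^-$. Since $V_L^-$ lies in $(0)^-$, which is a simple current of untwisted type by Proposition \ref{P:ADL}, Lemma \ref{L:gcon} shows $V_L^-\circ g$ is again an irreducible simple current $V_L^+$-module; it is not $V_L^+=(0)^+$ (which $g$ fixes, being the adjoint module), so by Theorem \ref{T:irrVQ+} it is $(0)^-$ or a twisted-type module, using that for unimodular $L$ one has $L^*\cap(L/2)=L$ and hence the only untwisted-type irreducible modules are $(0)^{\pm}$. The twisted-type case is excluded precisely by the hypothesis $r:=\mathrm{rank}\,L\ge16$: a twisted-type irreducible $V_L^+$-module has conformal weight $r/16$ or $r/16+1/2$, which exceeds the conformal weight $1$ of $V_L^-$ once $r\ge24$; and when $r=16$ (so $L$ is $E_8^2$ or $D_{16}^{+}$), where a twisted-type module can have conformal weight $1$, $\theta$ acts on the $2^{r/2}$-dimensional twisted ground state with balanced eigenspaces, so its weight-one space has dimension $2^{r/2-1}=128$, whereas $\dim(V_L^-)_1=r+\tfrac12\#\{\alpha\in L\mid(\alpha|\alpha)=2\}=256$; hence no isomorphism is possible (this is the analogue, ruled out here, of the untwisted-to-twisted phenomenon of Lemma \ref{L:DE}, which does occur for $L\cong E_8$ in rank $8$). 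Granting $V_L^-\circ g\cong V_L^-$, the standard lifting argument for $\Z_2$-graded simple current extensions applies: choose a $V_L^+$-module isomorphism $\phi\colon V_L^-\to V_L^-\circ g$ and rescale it so that $\tilde g:=g\oplus\phi$ on $V_L=V_L^+\oplus V_L^-$ is compatible with the vertex operators $V_L^-\times V_L^-\to V_L^+$ (the space of such intertwiners is one-dimensional, which makes the rescaling possible); then $\tilde g\in\Aut(V_L)$, it preserves $V_L^{\pm}$ hence commutes with $\theta$, and $\pi(\tilde g)=g$. This gives the first isomorphism.

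For the second isomorphism I would use that $\theta$ is central in $O(\hat{L})$, so $O(\hat{L})\subseteq C_{\Aut(V_L)}(\theta)$, together with $\Aut(V_L)=\Inn(V_L)\,O(\hat{L})$ and normality of $\Inn(V_L)$. Writing $h\in C_{\Aut(V_L)}(\theta)$ as $h=\iota o$ with $\iota\in\Inn(V_L)$, $o\in O(\hat{L})$, the relation $\theta=h\theta h^{-1}=\iota\theta\iota^{-1}$ forces $\iota\in C_{\Inn(V_L)}(\theta)$, so $C_{\Aut(V_L)}(\theta)=C_{\Inn(V_L)}(\theta)\,O(\hat{L})$; dividing by $\langle\theta\rangle\subseteq O(\hat{L})$ yields $C_{\Aut(V_L)}(\theta)/\langle\theta\rangle\cong (C_{\Inn(V_L)}(\theta)\,O(\hat{L}))/\langle\theta\rangle$. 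The main obstacle is the surjectivity step, and within it the exclusion of the twisted-type possibility for $V_L^-\circ g$: this is exactly where the rank restriction is essential and where one must invoke the classification of irreducible $V_L^+$-modules and their conformal weights (and, in the borderline rank-$16$ case, weight-one dimensions) rather than purely formal arguments.
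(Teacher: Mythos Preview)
The paper does not prove this lemma; it is quoted from \cite[Proposition 7.3]{Sh06}. Your overall strategy---restriction map with kernel $\langle\theta\rangle$, then surjectivity via $V_L^-\circ g\cong V_L^-$ and the lifting theorem for $\Z_2$-graded simple current extensions (\cite[Theorem~3.3]{Sh04}), then the factorization $\Aut(V_L)=\Inn(V_L)\,O(\hat L)$ together with the centrality of $\theta$ in $O(\hat L)$---is exactly the standard one, and the pieces for the kernel, the lifting, and the second isomorphism are correct. For rank $\ge 24$ your conformal-weight exclusion of the twisted-type possibility is also correct.

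There is, however, a genuine gap in your rank-$16$ argument. Your claim that $\theta$ acts on the $2^{r/2}$-dimensional twisted ground state $T$ with ``balanced eigenspaces'' is not justified and is in fact wrong under the standard conventions: the lift of $\theta$ acts on $T$ by a scalar, so one twisted-type irreducible $V_L^+$-module has conformal weight $r/16=1$ with top space all of $T$ (dimension $2^{8}=256$), while the other has conformal weight $3/2$. Since $\dim(V_L^-)_1=r+\tfrac12\#\{\alpha\in L:(\alpha|\alpha)=2\}=16+240=256$ for both $E_8^2$ and $D_{16}^+$, the weight-one dimensions coincide and do not separate $V_L^-$ from the conformal-weight-$1$ twisted module. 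The statement is still true in rank $16$, but it needs a finer argument: for instance, for $L=E_8^2$ one can identify $V_L^+$ with a lattice VOA $V_M$ where $M$ is the index-$2$ overlattice of $D_8\oplus D_8$ glued by a diagonal spinor class, and then check directly that $O(M)$ does not interchange the two conformal-weight-$1$ cosets of $M^*/M$ (the swap of the two $D_8$ factors fixes both, and the diagram automorphisms of the factors do not preserve $M$). This kind of case analysis, rather than a one-line dimension count, is what is carried out in \cite{Sh06}.
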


\subsection{Simple affine VOAs associated with simple Lie algebras}

Let $\g$ be a simple Lie algebra.
Let $\mathfrak{h}$ be a (fixed) Cartan subalgebra of $\g$ and let $(\cdot|\cdot)$ be the Killing form on $\g$.
We identify the dual $\mathfrak{h}^*$ with $\mathfrak{h}$ via $(\cdot|\cdot)$ and normalize the form so that $(\alpha|\alpha)=2$ for any long root $\alpha\in\mathfrak{h}$.

Let $k$ be a positive integer and let $L_{\mathfrak{g}}(k,0)$ be the simple affine VOA associated with $\mathfrak{g}$ at level $k$ (\cite{FZ}).
When the type of $\mathfrak{g}$ is $X_n$, it is also denoted by $L_{X_n}(k,0)$.
Fix a set of simple roots of $\g$.
A dominant integral weight $\Lambda\in\h$ of $\g$ has level $k$ if $(\Lambda|\beta)\le k$ for the highest root $\beta$ of $\g$.
Then $\Irr(L_{\mathfrak{g}}(k,0))=\{L_{\mathfrak{g}}(k,\Lambda)\}$, where $\Lambda$ ranges over dominant integral weights of level $k$ (\cite{FZ}).

Let $\Inn(\g)$ be the inner automorphism group of $\g$ and $\Gamma(\g)$ the Dynkin diagram automorphism group of $\g$.
Then $\Aut(\g)=\Inn(\g):\Gamma(\g)$ (\cite[Section 16.5]{Hu}).
Note that $\Aut(L_\g(k,0))\cong\Aut(\g)$.
The following lemma is immediate:

\begin{lemma}\label{L:autoaff} Let $g\in\Aut(L_\g(k,0))$ and $\Lambda$ a dominant integral weight of level $k$.
\begin{enumerate}[{\rm (1)}]
\item If $g$ is inner, then $L_\g(k,\Lambda)\circ g\cong L_\g(k,\Lambda)$.
\item If $g$ is a Dynkin diagram automorphism, then $L_\g(k,\Lambda)\circ g\cong L_\g(k,g^{-1}(\Lambda))$.
\end{enumerate}
\end{lemma}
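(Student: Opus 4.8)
The plan is to handle the two parts separately, reducing each to a standard fact: part (1) to the behaviour of $\Inn(V)$ on $\Irr(V)$, and part (2) to the representation theory of the affine algebra associated with $\g$ together with the elementary action of a diagram automorphism on weights.

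For (1), recall that the restriction map $\Aut(L_\g(k,0))\to\Aut(\g)$ is an isomorphism. If $g\in\Aut(\g)$ is inner, write $g=\exp(\mathrm{ad}\,v_1)\cdots\exp(\mathrm{ad}\,v_m)$ with $v_i\in\g=(L_\g(k,0))_1$; then $\exp((v_1)_{(0)})\cdots\exp((v_m)_{(0)})$ is an element of $\Inn(L_\g(k,0))$ whose restriction to the weight one space is $g$, so by injectivity of the restriction map it equals $g$ as a VOA automorphism. As noted after Lemma~\ref{L:gcon}, elements of $\Inn(V)$ act trivially on $\Irr(V)$; hence $L_\g(k,\Lambda)\circ g\cong L_\g(k,\Lambda)$.

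For (2), first observe that $L_\g(k,\Lambda)\circ g$ is again an $L_\g(k,0)$-module, and it is irreducible by Lemma~\ref{L:gcon}(1); therefore $L_\g(k,\Lambda)\circ g\cong L_\g(k,\Lambda')$ for a unique dominant integral weight $\Lambda'$ of level $k$, and it remains to identify $\Lambda'$. Since $g\omega=\omega$, the operator $L(0)$ acts identically on $L_\g(k,\Lambda)\circ g$ and on $L_\g(k,\Lambda)$; in particular the lowest conformal weight space is unchanged as a graded subspace, but the action of $\g=V_1$ on it is twisted through $g$. Thus, as a $\g$-module, the lowest conformal weight space of $L_\g(k,\Lambda)\circ g$ is $V(\Lambda)\circ g$, where $V(\Lambda)$ denotes the finite-dimensional irreducible $\g$-module of highest weight $\Lambda$. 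A Dynkin diagram automorphism $g$ preserves the chosen Cartan subalgebra and the set of positive roots and permutes the simple root spaces, so a highest weight vector of $V(\Lambda)$ is still a highest weight vector in $V(\Lambda)\circ g$, now of weight $\Lambda\circ g|_\h$; identifying $\h$ with $\h^*$ via the (orthogonal) form, this weight is $g^{-1}(\Lambda)$. Hence $V(\Lambda)\circ g\cong V(g^{-1}(\Lambda))$, which forces $\Lambda'=g^{-1}(\Lambda)$ (again dominant integral of level $k$, since $g^{-1}$ permutes the fundamental weights and the highest root).

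Both arguments are routine. The only point requiring real attention is the bookkeeping of $g$ versus $g^{-1}$ in part (2) together with the identification of $\h^*$ with $\h$ under the form, i.e.\ checking that twisting $V(\Lambda)$ by the diagram automorphism $g$ yields $V(g^{-1}(\Lambda))$ rather than $V(g(\Lambda))$; beyond this there is no substantive obstacle.
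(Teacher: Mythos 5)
Your proof is correct: the paper itself gives no argument (it declares the lemma ``immediate''), and your two-step reasoning --- lifting an inner automorphism of $\g$ to an element of $\Inn(L_\g(k,0))$ via $\exp(v_{(0)})$ and using that $\Inn(V)$ acts trivially on $\Irr(V)$, then identifying the twisted module in (2) through its top level $V(\Lambda)\circ g\cong V(g^{-1}(\Lambda))$ --- is exactly the standard argument the paper has in mind, including the correct handling of the $g$ versus $g^{-1}$ direction under the identification of $\h$ with $\h^*$.
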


\section{VOAs $V_R^+$ associated with indecomposable root lattices $R$}
Let $R$ be an indecomposable root lattice such that $R\not\cong A_1$.
In this section, we prove that $V_R^+$ is isomorphic to the simple affine VOA associated with the semisimple Lie algebra $(V_R^+)_1$ at some positive integral level.
In addition, we study its irreducible modules and automorphisms by using the isomorphism.

\subsection{Isomorphism between $V_R^+$ and the simple affine VOA}
In this subsection, for an indecomposable root lattice $R\not\cong A_1$, we prove that $V_R^+$ is generated by the weight one subspace and we describe the type and level of $V_R^+$ as an affine VOA.

\begin{lemma}\label{L:gen1} Let $R$ be an indecomposable root lattice such that $R\not\cong A_1$.
Then $V_R^+$ is generated by the weight one subspace as a VOA.
\end{lemma}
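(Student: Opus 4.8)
The plan is to treat the indecomposable root lattices $R\not\cong A_1$ case by case according to the ADE type, and in each case exhibit enough weight-one vectors whose iterated products generate $V_R^+$. The starting point is the standard decomposition of $V_R^+$ into irreducible modules for the subVOA $V_{\sqrt2 R}^+\otimes(\text{something})$, or more efficiently, the observation that $V_R = V_R^+\oplus V_R^-$ as $V_R^+$-modules, where $V_R^-$ is the $(-1)$-eigenspace of $\theta$. The weight-one space $(V_R^+)_1$ is exactly the Lie algebra $\g$ of type $X_n$ (the same type as $R$), spanned by the $h(-1)\1$ for $h\in\h$ and the vectors $e^\alpha+e^{-\alpha}$ for roots $\alpha\in R$, since $\theta(e^\alpha)=e^{-\alpha}$ (up to the cocycle, which for root lattices can be chosen so that $\theta$ fixes $e^\alpha+e^{-\alpha}$). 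So I must show the subVOA $U$ generated by $\g$ equals all of $V_R^+$.

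The key steps, in order: First, by Proposition \ref{Prop:posl} the subVOA $U=\langle\g\rangle$ is a simple affine VOA $L_\g(k,0)$ at some positive integral level $k$, and it is a rational full subVOA of $V_R^+$ (full because $\omega\in U$, which holds since the Sugawara construction recovers $\omega$ once $U$ contains the full Cartan and enough root vectors — this needs a short check that the level $k$ makes the Sugawara central charge equal $n=\operatorname{rank}R$). Second, compute $k$: pairing $(e^\alpha+e^{-\alpha})_{(0)}$ with itself and using $(\alpha|\alpha)=2$, the canonical central element acts on $V_R$ by $1$, so $k=1$; but wait — $V_R^+$ is not $V_R$, and the relevant computation gives the level of $\g$ inside $V_R$, hence inside $V_R^+$, which turns out to be some explicit positive integer read off from the tables (these are the levels appearing in the $V_1$ column of Table \ref{T:Q}, e.g. $B_{n,2}$, $C_{4,1}$, etc.). Third, and this is the crux: compare the effective central charges, or better, compare the two rational VOAs $U\subset V_R^+$ via a quantum-dimension / character argument. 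Since both $U$ and $V_R^+$ are rational and $C_2$-cofinite with $U$ a full subVOA, $V_R^+$ decomposes as a finite direct sum of irreducible $U$-modules; I would show the only one that can occur is $U$ itself by matching conformal weights (the lowest conformal weight of any nontrivial $L_\g(k,\Lambda)$ is at least... ) against the structure of $V_R^+$, whose graded dimension is known explicitly from $\dim(V_R^+)_m = \tfrac12(\dim(V_R)_m + \dim(V_R)_m^{\theta})$.

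The main obstacle will be the third step — ruling out extra $U$-submodules in $V_R^+$. The cleanest route is probably to compare the full character (graded dimension) of $L_\g(k,0)$ with that of $V_R^+$ and check they agree; equivalently, compare with the Weyl–Kac character formula and the theta-function expression $\operatorname{ch} V_R^+ = \tfrac12\big(\theta_R/\eta^n + \text{(twisted term)}\big)$. Agreement of the leading few coefficients (degrees $1$ and $2$) already forces the level $k$, and then a dimension count at each weight, using that $V_R^+$ has no primary vectors of conformal weight $\le 2$ other than those in $\g$ and $\omega$ (which requires knowing $(V_R^+)_2$ contains no new $\g$-highest-weight vectors beyond those generated by $\g_{(-1)}\g$ and $L(-1)\g$ and $\omega$ — a finite check per type), finishes the argument. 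For the exceptional-looking cases ($R=A_7, D_n$, $E_n$) one can alternatively invoke the explicit realizations in \cite{FLM,Sh04,Sh06} already cited in the excerpt, reducing the work to the $A_n$ and $D_4$ families.
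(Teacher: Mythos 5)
Your setup contains a genuine error: $(V_R^+)_1$ is \emph{not} a Lie algebra of the same type as $R$, and it does not contain the Cartan part. Since $\bar\theta=-1$ on $R$, the vectors $h(-1)\1$ are negated by $\theta$, so the weight-one space of $V_R^+$ is spanned only by the vectors $e^\alpha+\theta(e^\alpha)$; its type is the one listed in Table \ref{T:gQ} (e.g.\ $B_n$ at level $2$ for $R=A_{2n}$, $C_4$ at level $1$ for $E_6$), as you in fact acknowledge later when you quote the levels from the tables --- but the argument as written starts from a wrong description of $\g$. More seriously, the crux of your strategy --- showing that $V_R^+$ contains no irreducible $\langle V_1\rangle$-submodule other than $\langle V_1\rangle$ itself --- is not carried out. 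The shortcut you propose (checking there are no new primary vectors of conformal weight $\le 2$) is insufficient: once $\omega$ is known to lie in $U=\langle V_1\rangle$, an extra irreducible $U$-submodule $L_\g(k,\Lambda)$ would appear in $V_R^+$ exactly at its own conformal weight $h_\Lambda$, and for instance for $\g=D_n$ at level $2$ (the case $R=A_{2n-1}$) there are irreducible modules whose conformal weight grows with $n$, so no uniform low-weight check rules them out. What would be needed is the full character identity $\mathrm{ch}\,L_\g(k,0)=\mathrm{ch}\,V_R^+$, which is essentially equivalent to the isomorphism $V_R^+\cong L_\g(k,0)$ (the paper's Proposition \ref{P:VOAgen1}) --- and in the paper that isomorphism is \emph{deduced from} this lemma, so you would have to supply an independent proof of these identities, which your sketch does not do. Note also that the fullness of $U$ (i.e.\ $\omega\in U$) needs more than equality of central charges; that too is left as a gesture.

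For contrast, the paper's argument is elementary and uniform: with $x^\alpha=e^\alpha+\theta(e^\alpha)\in(V_R^+)_1$ one has $x^\alpha_{(-1)}x^\alpha=\lambda\,\alpha(-1)^2\1$ with $\lambda\neq0$, so $\alpha(-1)^2\1\in U$; polarizing via $(\alpha+\alpha')(-1)^2$ for roots with $(\alpha|\alpha')=-1$ and using indecomposability of the root system gives $\beta(-1)\gamma(-1)\1\in U$ for all simple roots $\beta,\gamma$; these vectors together with the $x^\alpha$ generate $V_R^+$ by the argument of \cite[Proposition 12.2.6]{FLM}. If you want to salvage your approach, you would need either to prove the character identities directly (Weyl--Kac versus the theta-series expression for $\mathrm{ch}\,V_R^\pm$) case by case, or to find another way to exclude higher-weight primaries; as it stands the proposal does not close this gap.
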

\begin{proof} Let $\Phi$ be the set of all norm $2$ vectors of $R$.
Then $\Phi$ is a root system; let $\Delta$ be a set of simple roots of $\Phi$.
Let $U$ be the subVOA of $V_R^+$ generated by $(V_R^+)_1$.
Let $\alpha\in \Phi$ and set $x^\alpha=e^{\alpha}+\theta(e^\alpha)\in (V_R^+)_1\subset U$.
Then $x^\alpha_{(-1)}x^\alpha=\lambda\alpha(-1)^2\1$ for some non-zero $\lambda\in \C$, and hence $\alpha(-1)^2\1\in U.$
For $\alpha,\alpha'\in \Phi$ with $(\alpha|\alpha')=-1$, we have $$\alpha(-1)\alpha'(-1)\1=\frac12\left((\alpha+\alpha')(-1)^2-\alpha(-1)^2-\alpha'(-1)^2\right)\1\in U.$$ 
Since $\Phi$ is indecomposable, for any $\beta,\gamma\in\Delta$, we have $\beta(-1)\gamma(-1)\1\in U$.
Hence 
\begin{equation}
\{x^\alpha,\ \beta(-1)\gamma(-1)\1\mid \alpha\in\Phi,\ \beta,\gamma\in\Delta\}\subset U.\label{Eq:U}
\end{equation}
Since the rank of $R$ is at least $2$, we can apply a similar argument as in the proof of \cite[Proposition 12.2.6]{FLM} to our case, and we see that the set \eqref{Eq:U} generates $V_R^+$.
Hence $U=V_R^+$.
\end{proof}

\begin{remark} It follows from \cite[Section 3]{DG98} that $V_{A_1}^+$ is isomorphic to $V_{2A_1}$.
Hence $V_{A_1}^+$ is not generated by $(V_{A_1}^+)_1$ as a VOA.
Indeed, $(V_{2A_1})_1$ generates the proper subVOA $M(1)$.
\end{remark}

\begin{proposition}\label{P:VOAgen1} Let $R$ be an indecomposable root lattice such that $R\not\cong A_1$.
Then $V_R^+$ is isomorphic to the simple affine VOA associated with the semisimple Lie algebra $(V_R^+)_1$ at level $k_R$, where the type of $(V_R^+)_1$ and $k_R$ are given as in Table \ref{T:gQ}; note that we regard $D_2$ and $D_3$ as $A_1^2$ and $A_3$, respectively.
\end{proposition}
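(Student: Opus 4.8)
The plan is to reduce the statement to Lemma \ref{L:gen1} and the structure theory of strongly regular VOAs, so that the only remaining work is the Lie‑theoretic identification of $(V_R^+)_1$ and of its levels. First I would identify the Lie algebra $(V_R^+)_1$. Let $\Phi$ be the (indecomposable) root system of norm $2$ vectors of $R$; then $(V_R)_1=\h\oplus\bigoplus_{\alpha\in\Phi}\C e^\alpha$ is the simple Lie algebra of type $R$ with Cartan subalgebra $\h=\h(-1)\1$, and $\theta$ restricts to an order $2$ automorphism of it acting by $-1$ on $\h$ and sending each $e^\alpha$ to $\pm e^{-\alpha}$. Hence $\theta|_{(V_R)_1}$ is conjugate in $\Aut((V_R)_1)$ to the Chevalley involution, and $(V_R^+)_1=((V_R)_1)^\theta$ is (isomorphic to) its fixed‑point subalgebra, spanned by the vectors $e^\alpha+\theta(e^\alpha)$; in particular $\dim (V_R^+)_1=|\Phi|/2$. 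The fixed‑point subalgebra of the Chevalley involution of a simple Lie algebra is classically known: it is $\mathfrak{so}_n$ for type $A_{n-1}$, $\mathfrak{so}_n\oplus\mathfrak{so}_n$ for type $D_n$, $C_4$ for $E_6$, $A_7$ for $E_7$, and $D_8$ for $E_8$. This is semisimple precisely when $R\not\cong A_1$ (for $A_1$ it is one‑dimensional abelian, which is exactly why that case is excluded and why Lemma \ref{L:gen1} fails there). Reading off this list, with the dimension count as a cross‑check, gives the type column of Table \ref{T:gQ}.

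Next I would identify $V_R^+$ with an affine VOA. By Lemma \ref{L:gen1}, $V_R^+$ is generated by $(V_R^+)_1=\bigoplus_i\s_i$, its decomposition into simple ideals. Since $V_R^+$ is strongly regular, Proposition \ref{Prop:posl} applies to each $\s_i$: the subVOA $\langle\s_i\rangle$ is isomorphic to $L_{\s_i}(k_i,0)$ for some positive integer $k_i$. Distinct ideals $\s_i,\s_j$ commute and are orthogonal with respect to the invariant bilinear form, so the subVOAs $\langle\s_i\rangle$ mutually commute; hence $V_R^+=\langle(V_R^+)_1\rangle$ is a quotient of the tensor product $\bigotimes_i L_{\s_i}(k_i,0)$, which is simple, and is therefore isomorphic to it. This is the simple affine VOA $L_{(V_R^+)_1}(k_R,0)$ associated with $(V_R^+)_1$.

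It remains to compute the levels. Each $k_i$ is a positive integer by Proposition \ref{Prop:posl}, and since the type of $(V_R^+)_1$ is now known it is pinned down by the central‑charge identity ${\rm rank}\,R=c(V_R^+)=\sum_i k_i\dim\s_i/(k_i+h^\vee_{\s_i})$, where $h^\vee_{\s_i}$ is the dual Coxeter number (noting that for $R=D_n$ the simple factors of $(V_R^+)_1$ share a common level by the symmetry of the construction, which makes this one equation in one unknown); alternatively $k_i$ is the Dynkin index of the embedding $\s_i\hookrightarrow(V_R)_1\cong L_{(V_R)_1}(1,0)$. This yields the level column of Table \ref{T:gQ}. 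The step requiring care is exactly this Lie‑theoretic bookkeeping — identifying the Chevalley‑fixed subalgebra in each case and tracking the long/short root normalizations so that the levels come out in the conventions of the table (this is where, for instance, the level $4$ attached to $A_2$, and the conventions $D_2=A_1^2$ and $D_3=A_3$, enter). By contrast, the vertex‑algebra input is rather soft, being essentially Lemma \ref{L:gen1} together with Proposition \ref{Prop:posl}.
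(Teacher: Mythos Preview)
Your proposal is correct and follows essentially the same route as the paper: both arguments combine Lemma \ref{L:gen1} (generation by weight one), strong regularity of $V_R^+$, and Proposition \ref{Prop:posl} to conclude that $V_R^+$ is the simple affine VOA on $(V_R^+)_1$, and then identify the Lie algebra and the levels case by case. Where you work out the fixed-point subalgebra of the Chevalley involution and the levels directly (via central charge or Dynkin index), the paper simply cites \cite[Section 5, Table II]{He} for the former and \cite[Corollary 12.8]{Kac} for the latter; and where you spell out the tensor-product argument for the non-simple $(V_R^+)_1$ cases (the $D_n$ lattices), the paper leaves this implicit in its appeal to Proposition \ref{Prop:posl}.
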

\begin{proof}
Recall that the type of the Lie algebra $(V_R)_1$ and that of the root system of $R$ are the same.
By \cite[Section 5, Table II]{He}, the Lie algebra structure of $(V_R^+)_1$ is given as in Table \ref{T:gQ}.
In particular, it is semisimple.
We have mentioned in Section 3.2 that $V_R^+$ is strongly regular.
Hence this proposition follows from Proposition \ref{Prop:posl} and Lemma \ref{L:gen1}.
Note that the level $k_R$ is also given as in Table \ref{T:gQ} (see \cite[Corollary 12.8]{Kac}).
\end{proof}

\begin{table}[bht]
\caption{Lie algebra structure of $(V_R^+)_1$ and level $k_R$} \label{T:gQ}
\begin{tabular}{|c|c|c|c|}
\hline
$R$& $(V_R^+)_1$& $k_R$
\\\hline\hline
$A_2$&$A_1$&$4$
\\\hline
$A_{2n}$ $(n\ge2)$ &$B_n$ &$2$
\\\hline
$A_{2n-1}$ $(n\ge2)$ &$D_n$ &$2$
\\\hline
$D_{2n}$ $(n\ge2)$& $D_{n}^2$& $1$
\\\hline
$D_{2n+1}$ $(n\ge2)$& $B_n^2$&$1$
\\\hline
$E_6$&$C_{4}$ & $1$
\\\hline
$E_7$&$A_7$&$1$
\\\hline
$E_8$&$D_8$ &$1$
\\
 \hline
\end{tabular}
\end{table}

In appendix A, we describe the correspondences between $\Irr(V_R^+)$ and $\Irr(L_\g(k_R,0))$, where $\g=(V_R^+)_1$.

Recall that $\Aut (L_\g(k_R,0))\cong\Aut(\g)$ and that the outer automorphism group $\Out(\g)=\Aut(\g)/\Inn(\g)$ of the semisimple Lie algebra $\g$ is isomorphic to the semidirect product of the diagram automorphism group of $\g$ and the direct product of the symmetric groups on isomorphic simple ideals.

In Table \ref{T:gen}, a set of generators in $\Aut(V_R^+)$ of $\Out(\g)$ is given; we omit the exceptional case $R\cong E_8$ (cf.\ \cite[Corollary 4.4]{Sh06}).
Here, $\lambda_i$ are fundamental weights of $R$, $f_u$ is an element in $\Hom(R,\Z_2)$ (see \eqref{Eq:hom}), $\hat\Gamma$ is a set of lifts in $O(\hat{R})$ of the Dynkin diagram automorphisms of $R$ and $\sigma$ is an extra automorphism of $V_R^+$ (cf.\ Remark \ref{R:DE}).
We adopt the labeling of simple roots as in \cite[Section 11.4]{Hu}.
This table can be verified by direct computation; indeed, the action of $O(\hat{R})$ on $\Irr(V_R^+)$ is described in \cite[Lemma 1.7]{Sh06} (cf.\ \cite[Proposition 2.9]{Sh04}).
On the other hand, the action of $\Aut(L_\g(k_R,0))(\cong\Aut(\g))$ on $\Irr(L_\g(k_R,0))$ is determined by the action of $\Out(\g)$ on the highest weight irreducible $\g$-modules (see Lemma \ref{L:autoaff}).
By using the correspondences between $\Irr(V_R^+)$ and $\Irr(L_\g(k_R,0))$ in Appendix A, one obtain this table.

\begin{table}[bht]
\caption{Generators of $\Out(\g)$ as automorphisms of $V_R^+$} \label{T:gen}
\begin{tabular}{|c|c|c|c|}
\hline
$R$& $\g=(V_R^+)_1$& $\Out(\g)$& generators in $\Aut(V_R^+)$ of $\Out(\g)$
\\\hline\hline
$A_2$, $A_{2n}$ $(n\ge2)$, $E_6$ &$A_1$, $B_n$, $C_4$ &$1$&
\\\hline
$A_3$&$A_1^2$&$\Sym_2$&${f_{\lambda_1}}$\\ \hline
$A_7$&$D_4$&$\Sym_3$&${f_{\lambda_1}}$, $\sigma$\\ \hline
$A_{2n-1}$ ($n=3,\ n\ge5$) &$D_n$ &$\Z_2$& ${f_{\lambda_1}}$
\\\hline
$D_4$&$A_1^4$&$\Sym_4$&${f_{\lambda_1}}$, ${f_{\lambda_3}}$, ${\hat\Gamma}$ \\\hline
$D_{2n}$ $(n=3,\ n\ge5)$& $D_{n}^2$& $\Z_2\wr\Sym_2$& $f_{\lambda_1}$, $f_{\lambda_{2n-1}}$, $\hat\Gamma$
\\\hline
$D_8$&$D_4^2$&$\Sym_3\wr\Sym_2$&${f_{\lambda_1}}$, ${f_{\lambda_7}}$, ${\hat\Gamma}$, $\sigma$ \\\hline
$D_{2n+1}$ $(n\ge2)$& $B_n^2$&$\Sym_2$& $f_{\lambda_1}$
\\\hline
$E_7$&$A_7$&$\Z_2$&$f_{\lambda_1}$
\\\hline
\end{tabular}
\end{table}

\subsection{Automorphism group of $(V_R^+)^{\otimes t}$}

In this subsection, we study the action of $\Aut ((V_R^+)^{\otimes t})$ on $\Irr((V_R^+)^{\otimes t})$.

\begin{lemma}\label{Lem:AutQ0} Let $R$ be an indecomposable root lattice such that $R\not\cong A_1$.
Let $t$ be a positive integer and let $\s$ be a simple ideal of $(V_R^+)_1$.
Then $\Aut((V_R^+)^{\otimes t})$ is the wreath product of $\Aut(\s)$ and the symmetric group on the simple ideals of $(V_R^+)_1$;
$$\Aut ((V_R^+)^{\otimes t})\cong \begin{cases}\Aut (\s)\wr\Sym_t&{\rm if\ }R\cong A_n\ (n\neq3),\ E_6,\ E_7\ {\rm or}\ E_8,\\
\Aut (\s)\wr\Sym_{2t}&{\rm if\ } R\cong A_3\ {\rm or\ }D_{n}\ (n\ge5),\\
\Aut (\s)\wr\Sym_{4t}&{\rm if\ }R\cong D_4.
\end{cases}$$
\end{lemma}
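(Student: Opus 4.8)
The plan is to compute $\Aut((V_R^+)^{\otimes t})$ by combining Proposition \ref{P:VOAgen1} with general facts about automorphism groups of tensor powers of simple affine VOAs. First I would invoke Proposition \ref{P:VOAgen1} to replace each factor $V_R^+$ by the simple affine VOA $L_\g(k_R,0)$ with $\g=(V_R^+)_1$; hence $(V_R^+)^{\otimes t}\cong L_\g(k_R,0)^{\otimes t}$. Since $L_\g(k_R,0)$ is strongly regular (it is a simple affine VOA) and generated by its weight one subspace, its automorphism group is $\Aut(\g)$. For a simple affine VOA $L_\g(k,0)$, $\g$ is already a \emph{semisimple} Lie algebra in several of our cases (it is $A_1^2$, $D_n^2$, $B_n^2$, $A_1^4$, $D_4^2$ depending on $R$), but each simple ideal $\s$ of $\g$ has the same level $k_R$; the key structural point is that $\Aut(L_\g(k,0))\cong\Aut(\g)$ and, when $\g=\s^{\oplus m}$ is a sum of $m$ copies of a single simple Lie algebra $\s$, $\Aut(\g)\cong\Aut(\s)\wr\Sym_m$.

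The main step is then: for a VOA of the form $U^{\otimes t}$ where $U$ is a simple, strongly regular VOA generated by $U_1$ with $U_1$ a direct sum of $m$ copies of a simple Lie algebra $\s$, one has $\Aut(U^{\otimes t})\cong\Aut(\s)\wr\Sym_{mt}$. I would argue this as follows. Any $g\in\Aut(U^{\otimes t})$ restricts to an automorphism of the Lie algebra $(U^{\otimes t})_1\cong\s^{\oplus mt}$, hence lies, modulo the kernel $K$ of the restriction $\varphi_1$ to the weight one space, in $\Aut(\s^{\oplus mt})\cong\Aut(\s)\wr\Sym_{mt}$. Conversely all of $\Aut(\s)\wr\Sym_{mt}$ is realized: the simple ideal automorphisms and the $\Sym_{mt}$ permuting the simple ideals all extend to $U^{\otimes t}$ because $U^{\otimes t}$ is generated by its weight one subspace (so an automorphism of $(U^{\otimes t})_1$ preserving the Virasoro vector extends at most uniquely, and the $\Sym_{mt}$-action on $\s^{\oplus mt}$ visibly does preserve all the structure: it just permutes tensor-type factors, using that all simple ideals sit at the \emph{same} level $k_R$ and in \emph{isomorphic} copies of $U$, so permuting them — including permuting simple ideals inside a single $U$ factor — is an honest VOA automorphism). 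Finally $K=1$: since $U^{\otimes t}$ is generated by $(U^{\otimes t})_1$, an automorphism trivial on the weight one space is trivial. Counting: $m=1$ when $R\cong A_n$ ($n\neq 3$), $E_6$, $E_7$, $E_8$, giving $\Aut(\s)\wr\Sym_t$; $m=2$ when $R\cong A_3$ or $D_n$ ($n\ge5$), giving $\Aut(\s)\wr\Sym_{2t}$; and $m=4$ when $R\cong D_4$, giving $\Aut(\s)\wr\Sym_{4t}$, which is exactly the claimed formula.

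I expect the main obstacle to be justifying that \emph{every} element of the full wreath product $\Aut(\s)\wr\Sym_{mt}$ actually lifts to a VOA automorphism of $(V_R^+)^{\otimes t}$ — in particular the permutations that mix simple ideals lying in different tensor factors, and (for $R=A_3,D_n,D_4$) the permutations that mix the $m$ simple ideals sitting inside a \emph{single} factor $V_R^+$ with those in other factors. The clean way around this is precisely the affine VOA identification: under $(V_R^+)^{\otimes t}\cong L_\g(k_R,0)^{\otimes t}\cong L_{\g^{\oplus t}}(k_R,0)$ (a simple affine VOA for the semisimple Lie algebra $\g^{\oplus t}=\s^{\oplus mt}$ at the single level $k_R$), we have $\Aut(L_{\s^{\oplus mt}}(k_R,0))\cong\Aut(\s^{\oplus mt})\cong\Aut(\s)\wr\Sym_{mt}$ directly, with no case distinction about which factor an ideal came from; the three cases in the statement are then just bookkeeping of the value of $m$ read off from Table \ref{T:gQ}. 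A secondary minor point to check is that $\s$ is genuinely simple (not just semisimple) in each case — immediate from Table \ref{T:gQ} — and that there are no diagram automorphisms of $\g$ mixing a simple ideal with a non-isomorphic one, which is automatic since within a single $V_R^+$ all simple ideals are mutually isomorphic.
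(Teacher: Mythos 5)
Your proposal is correct and follows essentially the same route as the paper: identify $(V_R^+)^{\otimes t}$ via Proposition \ref{P:VOAgen1} and Table \ref{T:gQ} with a tensor product of $mt$ copies of $L_\s(k_R,0)$ (all at the same level), and then use $\Aut(L_\s(k_R,0))\cong\Aut(\s)$ together with $\Aut(\s^{\oplus mt})\cong\Aut(\s)\wr\Sym_{mt}$. The extra details you supply (injectivity of restriction to the weight one space because the VOA is generated by it, and surjectivity via permutations of the isomorphic affine tensor factors) are exactly the facts the paper's short proof implicitly uses.
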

\begin{proof}
By Proposition \ref{P:VOAgen1}, along with Table \ref{T:gQ}, $(V_R^+)^{\otimes t}$ is the tensor product of some copies of $L_\s(k_R,0)$.
By the facts $\Aut (L_\s(k_R,0))\cong\Aut(\s)$ and $\Aut(\s^{\oplus t})\cong\Aut(\s)\wr\Sym_t$, we obtain this lemma.
\end{proof}

Let $\Irr(V_R^+)_0$ denote the subset of $\Irr(V_R^+)$ consisting of isomorphism classes of untwisted type.

\begin{lemma}\label{Lem:AutQQ} Let $R$ be an indecomposable root lattice such that $R\not\cong A_1$.
Then $\Aut ((V_R^+)^{\otimes 2})$ does not preserve $\{M^1\otimes M^2\mid M^i\in \Irr(V_R^+)_0\}\subset\Irr((V_R^+)^{\otimes 2})$ if and only if $R\cong A_3$, $A_7$, $D_n$ $(n\ge4)$ or $E_8$.
\end{lemma}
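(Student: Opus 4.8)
The plan is to combine Proposition~\ref{P:VOAgen1} with the description of $\Aut((V_R^+)^{\otimes 2})$ in Lemma~\ref{Lem:AutQ0}. Let $\s$ be the simple ideal of $(V_R^+)_1$ and let $p$ be the number of its simple ideals, so that $V_R^+\cong L_\s(k_R,0)^{\otimes p}$ by Proposition~\ref{P:VOAgen1} and Table~\ref{T:gQ}; here $p=1$ for $R\cong A_n$ ($n\ne3$), $E_6,E_7,E_8$, $p=2$ for $R\cong A_3$ or $D_n$ ($n\ge5$), and $p=4$ for $R\cong D_4$. Writing $U=L_\s(k_R,0)$ and $m=2p$, we get $(V_R^+)^{\otimes 2}\cong U^{\otimes m}$, $\Irr((V_R^+)^{\otimes 2})=\Irr(U)^{\times m}$, and $\Aut((V_R^+)^{\otimes 2})\cong\Aut(\s)\wr\Sym_m$, in which the inner part acts trivially on $\Irr(U^{\otimes m})$ (Lemma~\ref{L:autoaff}) and $\Sym_m$ permutes the $m$ tensor factors, hence acts on $\Irr(U)^{\times m}$ by permuting coordinates. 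Abbreviate $\mathcal S=\{M^1\otimes M^2\mid M^1,M^2\in\Irr(V_R^+)_0\}$.

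\emph{When $(V_R^+)_1$ is simple} ($p=1$, $m=2$) I use Lemma~\ref{L:DE}. Here $\Aut((V_R^+)^{\otimes 2})=\Aut(V_R^+)\wr\Sym_2$; the transposition merely swaps the two isomorphic factors and so fixes $\mathcal S$, so $\Aut((V_R^+)^{\otimes 2})$ preserves $\mathcal S$ if and only if $\Aut(V_R^+)$ preserves $\Irr(V_R^+)_0$. By Lemma~\ref{L:DE} this fails exactly when $R\cong A_7,D_8,E_8$ or $D_n\oplus D_n$; since $R$ is indecomposable with $(V_R^+)_1$ simple, this means exactly $R\cong A_7$ or $E_8$, and then Lemma~\ref{L:DE} produces an untwisted $W$ and $g\in\Aut(V_R^+)$ with $W\circ g$ of twisted type, whence $g\otimes\id$ sends $W\otimes V_R^+\in\mathcal S$ outside $\mathcal S$. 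This is consistent with the statement for all these $R$.

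\emph{When $(V_R^+)_1$ is not simple} ($R\cong A_3$ or $D_n$, $n\ge4$) every such $R$ is in the asserted list, so I must show $\Aut((V_R^+)^{\otimes 2})$ never preserves $\mathcal S$, and it suffices to exhibit one element of $\Sym_m$ that does not. Suppose first $p=2$, $m=4$, and identify $(V_R^+)^{\otimes 2}\cong U^{\otimes 4}$ so that the two copies of $V_R^+$ occupy the coordinate blocks $\{1,2\}$ and $\{3,4\}$. A direct set-theoretic check shows that the transposition $(2\,3)\in\Sym_4$ fixes $\mathcal S$ if and only if $\Irr(V_R^+)_0=P\times P\subset\Irr(U)^{\times 2}$ with $P=\{z\mid z\otimes z\in\Irr(V_R^+)_0\}$; in particular $|\Irr(V_R^+)_0|$ would then be a perfect square. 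But by Theorem~\ref{T:irrVQ+}, with $G=R^*/R$, one has $|\Irr(V_R^+)_0|=2|G[2]|+\tfrac12(|G|-|G[2]|)$, and since $G\cong\Z_4$ for $R\cong A_3$ and for $D_n$ with $n$ odd, while $G\cong\Z_2^2$ for $D_n$ with $n$ even, this number equals $5$ or $8$ --- never a perfect square. So $(2\,3)$ does not fix $\mathcal S$. Finally, for $R\cong D_4$ ($p=4$, $m=8$, $\s=A_1$): $U=L_{A_1}(1,0)$ has two irreducible modules $[0],[1]$ (with conformal weights $0,\tfrac14$), and comparing conformal weights (untwisted-type $V_{D_4}^+$-modules have conformal weight in $\tfrac12\Z$, twisted ones in $\tfrac14+\tfrac12\Z$) identifies $\Irr(V_{D_4}^+)_0$ with the set of $[\epsilon_1]\otimes\dots\otimes[\epsilon_4]$ having $\epsilon_1+\dots+\epsilon_4$ even; then the transposition interchanging the $4$th and $5$th copies of $U$ carries $([1]\otimes[0]\otimes[0]\otimes[1])\otimes([0]^{\otimes 4})\in\mathcal S$ to $([1]\otimes[0]\otimes[0]\otimes[0])\otimes([1]\otimes[0]\otimes[0]\otimes[0])$, a tensor product of two twisted-type modules, which is not in $\mathcal S$.

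The step I expect to be the main obstacle is the bookkeeping in the $D_n$ cases: one must be sure that $V_R^+$ is honestly isomorphic to a tensor power of a single simple affine VOA (via Proposition~\ref{P:VOAgen1} and Table~\ref{T:gQ}), that the two $V_R^+$-tensor-factors can legitimately be placed in prescribed coordinate blocks of $U^{\otimes m}$, and that $|\Irr(V_R^+)_0|$ is computed correctly from $R^*/R$ using Theorem~\ref{T:irrVQ+}; the ``$P\times P$'' lemma and the $D_4$ transposition are then routine. A secondary point needing care is the exceptional behaviour around $D_4$ and $D_8$ (triality, $V_{E_8}^+\cong V_{D_8}$, and the extra automorphism $\sigma$ of Remark~\ref{R:DE}), but in this lemma each of these lands unambiguously in one of the two cases above.
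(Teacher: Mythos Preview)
Your proof is correct. The overall architecture matches the paper's: split according to whether $(V_R^+)_1$ is simple, use Lemma~\ref{L:DE} together with Lemma~\ref{Lem:AutQ0} in the simple case, and exhibit a permutation of the $U$-factors violating invariance in the non-simple case.

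The genuine difference is how you handle the non-simple case $R\cong A_3$ or $D_n$ $(n\ge4)$. The paper invokes the explicit correspondence tables in Appendix~A (Tables~\ref{T:A3}, \ref{T:D2n}, \ref{T:D2n+1}) to see that elements of $\Aut((V_R^+)^{\otimes2})\setminus(\Aut(V_R^+)\wr\Sym_2)$ fail to preserve $\mathcal S$. You instead give a table-free counting argument for $p=2$: if the transposition $(2\,3)$ preserved $\mathcal S$ then $\Irr(V_R^+)_0$ would be a product set $P\times P$, forcing $|\Irr(V_R^+)_0|$ to be a perfect square, which you rule out by the formula $2|G[2]|+\tfrac12(|G|-|G[2]|)\in\{5,8\}$ coming from Theorem~\ref{T:irrVQ+}. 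This is cleaner and more self-contained than the paper's appeal to the appendix; on the other hand, the paper's use of the tables also covers $D_8$ via Lemma~\ref{L:DE} (since $\Aut(V_{D_8}^+)$ already mixes twisted and untwisted types), whereas in your argument $D_8$ falls under the same perfect-square obstruction as the other even $D_n$. Your $D_4$ step, giving an explicit transposition between blocks, is essentially what the paper's table-based claim amounts to in that case.
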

\begin{proof} 
Assume that $R\cong A_n$ $(n\neq3,7)$, $E_6$ or $E_7$.
By Lemma \ref{L:DE}, $\Aut (V_R^+)$ preserves $\Irr(V_R^+)_0$.
By Lemma \ref{Lem:AutQ0}, $\Aut ((V_R^+)^{\otimes 2})$ also does $\{M^1\otimes M^2\mid M^i\in \Irr(V_R^+)_0\}$.

Assume that $R\cong A_7$, $D_8$ or $E_8$.
By Lemma \ref{L:DE}, $\Aut (V_R^+)$ does not preserve $\Irr(V_R^+)_0$.
Hence $\Aut ((V_R^+)^{\otimes 2})$ also does not preserve $\{M^1\otimes M^2\mid M^i\in \Irr(V_R^+)_0\}$.

Assume that $R\cong A_3$ or $D_{n}$ ($n\ge4$, $n\neq8$).
By Lemma \ref{Lem:AutQ0} and Tables \ref{T:A3}, \ref{T:D2n} and \ref{T:D2n+1} in Appendix A, any element in $\Aut((V_R^+)^{\otimes2})\setminus(\Aut(V_R^+)\wr\Sym_2)$
does not preserve $\{M^1\otimes M^2\mid M^i\in \Irr(V_R^+)_0\}$.
\end{proof}

\begin{lemma}\label{R:t3} Let $Q_i$ $(1\le i\le t)$ be an indecomposable root lattice such that $Q_i\not\cong A_1,E_8$.
Then, there exist $g\in\Aut(\bigotimes_{i=1}^t(V_{Q_i}^+))$ and $M^i\in\Irr(V_{Q_i}^+)_0$ $(1\le i\le t)$ such that $(\bigotimes_{i=1}^t M^i)\circ g\cong\bigotimes_{i=1}^t J^i$ and $J^i\notin \Irr(V_{Q_i}^+)_0$ for all $i$ if and only if $\bigoplus_{i=1}^t Q_i$ is the orthogonal sum of copies of $A_3^2$, $D_n^2$ $(n\ge4,n\neq8)$, $A_7$, $D_8$.
\end{lemma}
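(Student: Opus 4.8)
The statement is essentially a "packaging" of the previous two lemmas (\ref{Lem:AutQQ} and Lemma \ref{L:DE}) into a single criterion about arbitrary orthogonal sums of indecomposable root lattices, so the plan is to reduce to those two results together with a careful bookkeeping of how $\Aut(\bigotimes_{i=1}^t(V_{Q_i}^+))$ can mix the tensor factors. First I would recall, via Proposition \ref{P:VOAgen1} and Table \ref{T:gQ}, that each $V_{Q_i}^+$ is a simple affine VOA $L_{\s_i}(k_{Q_i},0)$ with $\s_i=(V_{Q_i}^+)_1$ semisimple, so $\bigotimes_{i=1}^t(V_{Q_i}^+)$ is a tensor product of simple affine VOAs and hence (by the general structure of automorphism groups of such tensor products, as used in the proof of Lemma \ref{Lem:AutQ0}) $\Aut(\bigotimes_{i=1}^t(V_{Q_i}^+))$ is generated by $\prod_i\Aut(V_{Q_i}^+)$ together with permutations of isomorphic tensor factors $V_{Q_i}^+$. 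A permutation of tensor factors always sends a product of untwisted-type modules to a product of untwisted-type modules, and likewise $\prod_i\Aut(V_{Q_i}^+)$ acts factorwise; the only way to leave the set $\{\bigotimes M^i\mid M^i\in\Irr(V_{Q_i}^+)_0\}$ is through an automorphism that, on some set of mutually isomorphic factors, combines a nontrivial permutation with factorwise automorphisms in a way that cannot be realized inside $\prod_i\Aut(V_{Q_i}^+)$ — precisely the phenomenon isolated in Lemma \ref{Lem:AutQQ} for two isomorphic copies.

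The core of the argument is therefore: among mutually isomorphic $Q_i$'s, one can move every factor simultaneously out of untwisted type if and only if one of the two mechanisms is available — either (a) a single $V_{Q_i}^+$ already has an automorphism carrying an untwisted-type module to a twisted-type one, which by Lemma \ref{L:DE} happens exactly for $Q_i\cong A_7,D_8,E_8$ (and $D_n\oplus D_n$, but here the $Q_i$ are indecomposable, so $A_7,D_8,E_8$); or (b) a pair of isomorphic factors admits such a "mixing" automorphism of $(V_R^+)^{\otimes2}$, which by Lemma \ref{Lem:AutQQ} happens exactly for $R\cong A_3,A_7,D_n\ (n\ge4),E_8$. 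Since the hypothesis excludes $E_8$, and since for $A_7,D_8$ mechanism (a) already applies to each single factor, the genuinely new case from (b) is $R\cong A_3$ or $R\cong D_n$ ($n\ge4,\ n\neq8$), where one needs two isomorphic copies. I would then argue that to move all $t$ factors out of untwisted type one needs the $Q_i$ to be partitioned into blocks each of which is handled by (a) or (b): a block of a single $A_7$ or $D_8$ by (a), and a block of two copies of $A_3$ or of $D_n$ ($n\ge4$) by (b); an unmatched single $A_3$ or $D_n$ cannot be moved out, since by Table \ref{T:gen} together with Tables \ref{T:A3}, \ref{T:D2n}, \ref{T:D2n+1} every automorphism of a single such $V_R^+$ preserves $\Irr(V_R^+)_0$. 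Hence the existence of the desired $g$ and $M^i$ forces $\bigoplus_{i=1}^tQ_i$ to be an orthogonal sum of copies of $A_3^2$, $D_n^2$ ($n\ge4,\ n\neq8$), $A_7$, $D_8$, and conversely in each of these cases one explicitly builds $g$ as a product of the two-factor mixing automorphisms of Lemma \ref{Lem:AutQQ} (for the $A_3^2$ and $D_n^2$ blocks) and the single-factor automorphisms of Remark \ref{R:DE} (for the $A_7$ and $D_8$ blocks), with $M^i$ chosen untwisted so that $J^i$ is twisted.

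The forward direction (existence of $g,M^i\Rightarrow$ the list) is the main obstacle, because one must rule out more exotic ways of combining permutations and factorwise automorphisms across a larger block of $k\ge3$ mutually isomorphic copies of, say, $A_3$ or $D_n$. For this I would note that $\Aut((V_R^+)^{\otimes k})\cong\Aut(\s)\wr\Sym_{mk}$ (with $m=1,2,4$ as in Lemma \ref{Lem:AutQ0}) acts on $\Irr((V_R^+)^{\otimes k})$ through the product action on $\Irr(V_R^+)^{\times k}$ up to the permutation of coordinates, and that whether a given element sends a tuple of untwisted-type classes to a tuple of twisted-type classes is detected coordinatewise after applying the permutation; then a parity/counting argument on the number of coordinates whose "twisted-ness" is toggled — using that, per Lemma \ref{Lem:AutQQ}, toggling requires pairing up two factors (for $A_3,D_n$, $n\neq8$) — shows an odd-sized block of such factors cannot be completely toggled, so blocks must be exactly of size $2$, giving $A_3^2$ and $D_n^2$. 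Combined with the single-factor cases $A_7,D_8$ from Lemma \ref{L:DE}, this pins down the list exactly. I expect the write-up to be short once Lemmas \ref{L:DE}, \ref{Lem:AutQ0} and \ref{Lem:AutQQ} are invoked; the only delicate point is the clean formulation of this "toggling is a pairing" observation for blocks of arbitrary size.
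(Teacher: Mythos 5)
Your overall route coincides with the paper's: deduce the ``if'' direction from Lemmas \ref{L:DE} and \ref{Lem:AutQQ}, split the factors into blocks of mutually isomorphic $Q_i$, rule out $A_n$ ($n\neq3,7$), $E_6$, $E_7$, and force the $A_3$- and $D_n$-blocks ($n\ge4$, $n\neq8$) to pair up by a parity argument. But the step you yourself flag as delicate is where the proposal has a genuine gap. The claim that ``toggling requires pairing up two factors'' cannot be deduced from Lemma \ref{Lem:AutQQ}: that lemma is only an existence/non-existence statement for two copies and imposes no structural constraint on how an element of $\Aut((V_R^+)^{\otimes k})\cong\Aut(\s)\wr\Sym_{mk}$ acts when $k\ge3$; moreover such an automorphism does not act coordinatewise on the $V_R^+$-factors (its permutation part mixes all $mk$ affine slots), so ``the number of coordinates whose twisted-ness is toggled'' is not a well-defined invariant. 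What is needed, and what the paper supplies, is an explicit $\Aut(\s)$-stable set of irreducible $L_\s(k_R,0)$-modules --- $\{[\Lambda_{n-1}],[\Lambda_n]\}$ for the $D_{n,1}$-components of $D_{2n}$ ($n\ge3$, $n\neq4$), $\{[\Lambda_1]\}$ for $A_3$ and $D_4$, $\{[\Lambda_n]\}$ for $D_{2n+1}$ (read off from Tables \ref{T:A3}, \ref{T:D4}, \ref{T:D2n}, \ref{T:D2n+1}) --- such that an irreducible $V_R^+$-module is of twisted type exactly when an odd number of its affine tensor components lie in this set. Any automorphism permutes the $mk$ slots and acts on each by $\Aut(\s)$, so the total number of components in this set is preserved; it is even for $\bigotimes M^i$ and congruent to $t$ modulo $2$ for $\bigotimes J^i$, whence $t$ is even. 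Without such an invariant your parity step is an unproven assertion. (This is also exactly where $D_8$ escapes: triality on its $D_{4,1}$-components destroys the stability of the would-be detecting set, consistently with the extra automorphism $\sigma$.)

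Two smaller points. Your opening claim that $\Aut(\bigotimes_i V_{Q_i}^+)$ is generated by $\prod_i\Aut(V_{Q_i}^+)$ together with permutations of isomorphic factors $V_{Q_i}^+$ is false precisely in the cases at issue: for $A_3$ and $D_n$ the Lie algebra $(V_{Q_i}^+)_1$ is not simple, and the full group is the wreath product over the simple affine components (as you correctly state later), which is strictly larger; were the opening claim true, no mixing automorphisms would exist and the $A_3^2$, $D_n^2$ cases could not occur. Also, the reduction $\Aut(\bigotimes_i V_{Q_i}^+)\cong\prod_{b\in B}\Aut(\bigotimes_{i\in I_b}V_{Q_i}^+)$ over isomorphism blocks requires the check, via Table \ref{T:gQ}, that non-isomorphic $Q_i$ (with $Q_i\not\cong E_8$) never contribute isomorphic simple ideals at the same level --- this is the reason $E_8$ is excluded (its $D_{8,1}$ ideal collides with those of $D_{16}$) and should be stated. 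Finally, the conclusion for the $A_3$- and $D_n$-blocks is that each has even size (hence is a sum of copies of $A_3^2$ or $D_n^2$), not that blocks are ``exactly of size $2$''.
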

\begin{proof}
By Lemmas \ref{L:DE} and \ref{Lem:AutQQ}, the former assertion follows from the latter assertion.

Conversely, we assume the former assertion.
Let $\{1,\dots,t\}=\bigcup_{b\in B}I_b$ be the partition such that $Q_i\cong Q_j$ if and only if $i,j\in I_b$ for some $b\in B$, where $B$ is an index set.
By Table \ref{T:gQ}, simple ideals $\mathfrak{s}_1$ and $\mathfrak{s}_2$ of $(\bigotimes_{i=1}^t(V_{Q_i}^+))_1$ are isomorphic and have the same level if and only if $\mathfrak{s}_1,\mathfrak{s}_2\subset (\bigotimes_{i\in I_b}(V_{Q_i}^+))_1$ for some $b\in B$.
Hence $\Aut(\bigotimes_{i=1}^t(V_{Q_i}^+))\cong\prod_{b\in B}\Aut(\bigotimes_{i\in I_b}(V_{Q_i}^+))$.
Thus it is enough to consider the case where $Q_i\cong Q_j$ for all $i,j$.
By Lemmas \ref{L:DE} and \ref{Lem:AutQ0}, $Q_i$ is neither $A_n$ $(n\neq3,7)$, $E_6$ nor $E_7$.

Assume that $Q_i\cong D_{2n}$ $(n\ge3,\ n\neq4)$ (resp. $A_3$, $D_4$ and $D_{2n+1}$ $(n\ge2)$).
Let $\s$ be a simple ideal of $(V_{Q_i}^+)_1$.
Then the type of $\s$ is $D_n$ (resp. $A_1$, $A_1$ and $B_n$).
It follows from Table \ref{T:D2n} (resp. Tables \ref{T:A3}, \ref{T:D4} and \ref{T:D2n+1}) and the assumptions on $M^i$ and $J^i$ that $M^i$ and $J^i$ have even and odd tensor factors isomorphic to one of $\{[\Lambda_{n-1}],[\Lambda_n]\}$ (resp. $\{[\Lambda_1]\}$, $\{[\Lambda_1]\}$ and $\{[\Lambda_n]\}$) as the irreducible $L_\s(k_{Q_i},0)$-modules, respectively.
Since $\Aut(\s)$ preserves the set $\{[\Lambda_{n-1}],[\Lambda_{n}]\}$ (resp. $\{[\Lambda_1]\}$, $\{[\Lambda_1]\}$ and $\{[\Lambda_n]\}$), $\bigotimes_{i=1}^t J^i$ has even tensor factors isomorphic to one of $\{[\Lambda_{n-1}],[\Lambda_n]\}$ (resp. $\{[\Lambda_1]\}$, $\{[\Lambda_1]\}$ and $\{[\Lambda_n]\}$).
Hence $t$ must be even and we have proved this lemma.
\end{proof}

\begin{remark} By Table \ref{T:gQ}, both $(V_{E_8}^+)_1$ and $(V_{D_{16}}^+)_1$ have a simple ideal of type $D_{8,1}$.
In order to avoid this case, we assume $Q_i\not\cong E_8$ in Lemma \ref{R:t3}, which is enough for our purpose. 
\end{remark}

\section{Automorphism group of the holomorphic VOA $V_L^{{\rm orb}(\theta)}$}
Let $L$ be an even unimodular lattice. 
Let $V_L$ be the lattice VOA associated with $L$ and let $\theta$ be an element of $O(\hat{L})$ such that $\bar{\theta}=-1\in O(L)$.
Let $V_L(\theta)$ be the irreducible $\theta$-twisted $V_L$-module (\cite{FLM}).
Let $V_L^+$ be the fixed-point subspace of $\theta$ and $V_L(\theta)_\Z$ the subspace of $V_L(\theta)$ with integral conformal weights.
Then $V_L^+$ is a full subVOA of $V_L$ and $V_L(\theta)_\Z$ is an irreducible $V_L^+$-module.
Moreover, the $V_L^+$-module $$V=V_L^{\orb(\theta)}:=V_L^+\oplus V_L(\theta)_\Z$$
has a VOA structure as a $\Z_2$-graded simple current extension of $V_L^+$ (\cite{FLM,DGM,EMS}).
Note that $V$ is strongly regular and holomorphic.
Let $z$ be the automorphism of $V$ which acts as $1$ and $-1$ on $V_L^+$ and $V_L(\theta)_\Z$, respectively.
Let $C_{\Aut (V)}(z)$ be the centralizer of $z$ in $\Aut (V)$.
Then $C_{\Aut (V)}(z)=\{g\in\Aut (V)\mid g(V_L^+)=V_L^+\}$.
Hence $C_{\Aut(V)}(z)/\langle z\rangle\subset\Aut(V_L^+)$.

Let $Q$ be the root sublattice of $L$ and let $Q=\bigoplus_{i=1}^t Q_i$ be the orthogonal sum of indecomposable root lattices $Q_i$.
Note that $Q_i$ is $A_\ell$ $(\ell\ge1)$, $D_m$ $(m\ge4)$ or $E_n$ $(n=6,7,8)$.
Now, we consider the following conditions:
\begin{enumerate}[(I)]
\item the rank of $Q$ and $L$ are the same, and it is at least $24$;
\item $Q_i\not\cong A_1$ for all $i$;
\item $Q_i\not\cong E_8$ for all $i$ and $Q$ is not the orthogonal sum of copies of $A_3^2$, $D_{n}^2$ $(n\ge4$, $n\neq8$), $A_7$ and $D_8$.
\end{enumerate}

\begin{lemma}\label{Lem:gstab} 
\begin{enumerate}[{\rm (1)}]
\item If (I) holds, then $V_1\cong\bigoplus_{i=1}^t (V_{Q_i}^+)_1$ and $C_{\Aut(V)}(z)/\langle z\rangle\cong \Aut(V_L^+)$.
\item If (I) and (II) hold, then the subVOA $\langle V_1\rangle$ generated by $V_1$ is isomorphic to $\bigotimes_{i=1}V_{Q_i}^+$.
\item If (I), (II) and (III) hold, then for any  $g\in\Aut(V)$, we have 
$g(V_L^+)=V_L^+$.
\end{enumerate}
\end{lemma}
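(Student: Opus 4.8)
The plan is to prove the three parts in order, each building on the previous. For part (1), I would start from the observation that since the conformal weight of $V_L(\theta)_\Z$ is $2$ (as $L$ has rank $\ge 24$, hence minimal norm of $\theta$-twisted vectors is large enough), we get $V_1 = (V_L^+)_1$. Under condition (I), $L$ and $Q$ have the same rank, so $(V_L^+)_1 = (V_Q^+)_1$ because the weight one space of a lattice VOA $V_L^+$ comes only from the $\alpha(-1)\mathbf 1$ and $e^\alpha + \theta(e^\alpha)$ with $(\alpha|\alpha)=2$, all of which live in $V_Q$; then Lemma \ref{L:decuntw} or simply the direct sum decomposition of $V_Q = \bigotimes V_{Q_i}$ gives $V_1 \cong \bigoplus_{i=1}^t (V_{Q_i}^+)_1$. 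The isomorphism $C_{\Aut(V)}(z)/\langle z\rangle \cong \Aut(V_L^+)$ is exactly Lemma \ref{Lem:AutVL+} combined with the already-established fact (stated right before the lemma) that $C_{\Aut(V)}(z)/\langle z\rangle \subset \Aut(V_L^+)$ together with the lifting of automorphisms of $V_L^+$ to $V$ via the simple current extension structure (as in \cite{Sh04,Sh06}); since $\mathrm{rank}\,L \ge 24 \ge 16$, Lemma \ref{Lem:AutVL+} applies.

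For part (2), assuming additionally (II), each $Q_i \not\cong A_1$, so Lemma \ref{L:gen1} tells us $V_{Q_i}^+$ is generated by its weight one subspace; hence $\bigotimes_{i=1}^t V_{Q_i}^+$ is generated by $\bigoplus_{i=1}^t (V_{Q_i}^+)_1 = V_1$ (using that the tensor factors are generated by their degree-one parts, and the degree-one part of a tensor product of CFT-type VOAs is the direct sum of the degree-one parts of the factors). Since $\langle V_1\rangle$ is by definition the subVOA of $V$ generated by $V_1$, and $\bigotimes_{i=1}^t V_{Q_i}^+ = V_Q^+$ sits inside $V_L^+ \subset V$ as a subVOA containing $V_1$, the subVOA generated by $V_1$ inside $V$ equals the subVOA generated by $V_1$ inside $V_Q^+$, which by the above is all of $V_Q^+ = \bigotimes_{i=1}^t V_{Q_i}^+$.

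For part (3), this is the substantive step and where the main obstacle lies. Given $g \in \Aut(V)$, since $g$ fixes the conformal vector it preserves $V_1$, hence preserves the subVOA $\langle V_1\rangle = \bigotimes_{i=1}^t V_{Q_i}^+$ by part (2). Now decompose $V$ into isotypic components over the rational full subVOA $U = \bigotimes_{i=1}^t V_{Q_i}^+$: $V = \bigoplus_{W} V(W)$ where $W$ ranges over $\Irr(U)$, and by Lemma \ref{L:gact}, $g(V(W)) = V(W \circ g^{-1})$. The key point is that $V_L^+$ is precisely the sum of the $V(W)$ with $W$ of \emph{untwisted type} — i.e. $W \cong \bigotimes_i M^i$ with each $M^i \in \Irr(V_{Q_i}^+)_0$ — this follows from Lemma \ref{L:decuntw} applied to the pair $Q \subset L$, noting $V_{L}^+ \subset V_L$ decomposes with all pieces of untwisted type while $V_L(\theta)_\Z$ contributes only twisted-type $U$-modules. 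So to show $g(V_L^+) = V_L^+$ it suffices to show $g$ (equivalently the permutation of $\Irr(U)$ it induces) preserves the set of untwisted-type modules. By Lemma \ref{R:t3}, condition (III) is exactly the condition guaranteeing that \emph{no} automorphism of $\bigotimes_{i=1}^t V_{Q_i}^+$ can send a module with all tensor factors untwisted to one with all factors twisted; but $g$ restricted to $U$ is such an automorphism, and one must rule out that $g$ sends an untwisted-type $W$ to a module that is a tensor product in which \emph{some} but not all factors are twisted. Here I would use that $\Aut(U)$, by Lemma \ref{Lem:AutQ0} and the preceding discussion grouping isomorphic tensor factors, is a product of wreath products $\Aut(\mathfrak s)\wr\Sym_{mt}$, so any $g|_U$ permutes the simple ideals of $V_1$ only within blocks of mutually isomorphic-with-equal-level type; twisted-type modules for $V_{Q_i}^+$ are detected by the highest weights of the corresponding affine modules (Appendix A tables), and an element of $\Aut(\mathfrak s)$ preserves the partition of $\Irr(L_{\mathfrak s}(k,0))$ into "comes from untwisted type" versus "comes from twisted type" \emph{unless} $Q_i$ is one of the exceptional lattices $A_3, A_7, D_n, E_8$ by Lemma \ref{L:DE} — and condition (III) together with the block structure precisely excludes the possibility that the untwisted/twisted-mixed configurations can be reached. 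The main obstacle is bookkeeping: carefully tracking, block by block, that the combined action on $\Irr(U)$ coming from an arbitrary $g\in\Aut(V)$ (which a priori need not respect the tensor decomposition as a "pure" wreath element until one invokes that it must preserve $V_1$ and its ideal structure) cannot move the "all untwisted" locus out of itself, which is exactly the content packaged into Lemma \ref{R:t3}; so the proof of (3) should reduce, after setting up the isotypic decomposition and invoking parts (1)–(2), to a direct citation of Lemma \ref{R:t3} applied with $\{Q_i\}$ the indecomposable components of $Q$, once one checks condition (III) is literally the negation of the lattice condition appearing there.
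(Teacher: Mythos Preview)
Your plan for parts (1) and (2) is essentially the paper's argument. One minor point on (1): Lemma~\ref{Lem:AutVL+} concerns the relation between $\Aut(V_L^+)$ and $\Aut(V_L)$, not between $\Aut(V_L^+)$ and $\Aut(V)$; what you actually need is the lifting criterion for simple current extensions (\cite[Theorem~3.3]{Sh04}), and the paper supplies its hypothesis by observing that under (I) the four irreducible $V_L^+$-modules have pairwise distinct conformal weights, so every automorphism of $V_L^+$ automatically fixes the isomorphism class of $V_L(\theta)_\Z$.

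For part (3) there is a real gap. You correctly reduce to showing that the permutation of $\Irr(U)$ induced by $g$ (with $U=\bigotimes_i V_{Q_i}^+$) sends all-untwisted modules to all-untwisted modules, and you correctly note that Lemma~\ref{R:t3} only rules out ``all-untwisted $\to$ all-twisted''. You then try to close the gap by arguing that condition (III) together with the wreath-product structure of $\Aut(U)$ prevents any automorphism of $U$ from producing a \emph{mixed} configuration. This is not true in general: if some $Q_i$ is $A_3$ or $D_n$ while another $Q_j$ is not on the exceptional list, an element of $\Aut(U)$ can swap untwisted and twisted on the $Q_i$ factor alone and leave the $Q_j$ factor untwisted, yielding a mixed module---and (III) can still hold. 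So your proposed bookkeeping cannot succeed as stated.

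The paper avoids this entirely by using one observation you omit: $g(M)$ is not an arbitrary irreducible $U$-module but an irreducible $U$-submodule of $V = V_L^+ \oplus V_L(\theta)_\Z$, and by Lemma~\ref{L:decuntw} applied to both summands, every irreducible $U$-submodule of $V$ is either all-untwisted (if it lies in $V_L^+$) or all-twisted (if it lies in $V_L(\theta)_\Z$). Mixed types simply do not occur in $V$. Hence once Lemma~\ref{R:t3} guarantees that $g(M)$ has at least one untwisted tensor factor, one concludes $g(M)\subset V_L^+$ immediately. This constraint coming from $V$ itself, rather than from $\Aut(U)$ abstractly, is the missing step that makes the citation of Lemma~\ref{R:t3} sufficient.
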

\begin{proof} Assume (I).
Then the conformal weight of $V_L(\theta)_\Z$ is at least $2$.
Hence (1) follows from $V_1=(V_L^+)_1=(V_Q^+)_1\cong\bigoplus_{i=1}^t (V_{Q_i}^+)_1$.
The assumption (I) also implies that the conformal weights of non-isomorphic irreducible $V_L^+$-modules are different.
Hence any automorphism $g$ of $V_L^+$ satisfies $V_L(\theta)_\Z\circ g\cong V_L(\theta)_\Z$.
Since $V$ is a simple current extension of $V_L^+$, $g$ lifts to an element in $C_{\Aut(V)}(z)$ (\cite[Theorem 3.3]{Sh04}).
Thus $C_{\Aut(V)}(z)/\langle z\rangle\cong \Aut(V_L^+)$.

In addition, we assume (II).
Then (2) follows form (1) and Proposition \ref{P:VOAgen1}.

We further assume (III).
By (2), $\langle V_1\rangle$ is full and strongly regular.
Hence $V$ is the direct sum of finitely many irreducible $\langle V_1\rangle$-submodules.
Let $M$ be an irreducible $\langle V_1\rangle$-submodule of $V_L^+$.
By Lemma \ref{L:decuntw}, $M$ is the tensor product of irreducible $V_{Q_i}^+$-modules of untwisted type.
By Lemma \ref{L:gact} $g(M)$ is isomorphic to $M\circ g_{|\langle V_1\rangle}^{-1}$ as $\langle V_1\rangle$-modules.
By the assumption (III) and Lemma \ref{R:t3}, $g(M)$ contains at least one irreducible $V_{Q_i}^+$-module of untwisted type as a tensor factor.
Clearly, $g(M)$ is an irreducible $\langle V_1\rangle$-submodule of $V_L^+$ or $V_L(\theta)_\Z$.
By Lemma \ref{L:decuntw} again, we obtain $g(M)\subset V_L^+$, and hence $g(V_L^+)=V_L^+$.
\end{proof}

By Lemma \ref{Lem:gstab}, we obtain the following:

\begin{theorem}\label{L:M1} Let $L$ be an even unimodular lattice satisfying (I), (II) and (III).
Let $V=V_L^{\orb(\theta)}$.
Then $\Aut (V)=C_{\Aut (V)}(z)$ and $\Aut (V)/\langle z\rangle\cong \Aut (V_L^+)$.
\end{theorem}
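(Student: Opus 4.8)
The plan is to deduce Theorem~\ref{L:M1} directly from Lemma~\ref{Lem:gstab}, since almost all the work has already been packaged there. First I would observe that the three conditions (I), (II), (III) are exactly the hypotheses of Lemma~\ref{Lem:gstab}(3), so for every $g\in\Aut(V)$ we have $g(V_L^+)=V_L^+$. Recalling from the setup that $C_{\Aut(V)}(z)=\{g\in\Aut(V)\mid g(V_L^+)=V_L^+\}$ (this is because $z$ is the automorphism acting as $+1$ on $V_L^+$ and $-1$ on $V_L(\theta)_\Z$, and these are precisely the $\pm1$-eigenspaces, so $g$ commutes with $z$ iff it preserves them), the inclusion $\Aut(V)\subseteq C_{\Aut(V)}(z)$ follows; the reverse inclusion is trivial, so $\Aut(V)=C_{\Aut(V)}(z)$.

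For the second assertion, I would invoke Lemma~\ref{Lem:gstab}(1): condition (I) gives $C_{\Aut(V)}(z)/\langle z\rangle\cong\Aut(V_L^+)$. Combining with the equality just established yields $\Aut(V)/\langle z\rangle\cong\Aut(V_L^+)$. One small point to check is that $z$ really is a nontrivial automorphism of order $2$ lying in $\Aut(V)$, so that the quotient $\Aut(V)/\langle z\rangle$ makes sense and the isomorphism in Lemma~\ref{Lem:gstab}(1) transports correctly; but $z$ is nontrivial since $V_L(\theta)_\Z\neq 0$ (its conformal weight is an integer $\ge 2$ under (I), and it is a nonzero irreducible $V_L^+$-module), and it visibly has order $2$.

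There is essentially no obstacle here: the theorem is a formal corollary, and the only thing to be careful about is citing the correct parts of Lemma~\ref{Lem:gstab} and making explicit the identification $C_{\Aut(V)}(z)=\{g\mid g(V_L^+)=V_L^+\}$, which was stated in the preamble to the lemma. The genuine content — that an arbitrary automorphism must preserve $V_L^+$ rather than possibly swapping it with a twisted piece — lives entirely in Lemma~\ref{Lem:gstab}(3), whose proof rests on Lemma~\ref{R:t3} (controlling when an automorphism of $\bigotimes V_{Q_i}^+$ can send a tensor product of untwisted-type modules to one all of whose factors are of twisted type) together with the fact, from Lemma~\ref{L:decuntw}, that irreducible $\langle V_1\rangle$-submodules of $V_L^+$ decompose into untwisted-type factors. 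So in writing up Theorem~\ref{L:M1} itself I would keep the argument to a few lines and simply assemble these ingredients.
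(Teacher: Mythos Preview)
Your proposal is correct and follows exactly the paper's approach: the theorem is stated as an immediate consequence of Lemma~\ref{Lem:gstab}, with part~(3) giving $g(V_L^+)=V_L^+$ for all $g\in\Aut(V)$ (hence $\Aut(V)=C_{\Aut(V)}(z)$) and part~(1) supplying $C_{\Aut(V)}(z)/\langle z\rangle\cong\Aut(V_L^+)$. The additional checks you spell out (that $z$ is nontrivial of order~$2$, and the identification $C_{\Aut(V)}(z)=\{g\mid g(V_L^+)=V_L^+\}$) are harmless clarifications already implicit in the setup.
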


\section{Automorphism groups of $V_N^{\orb(\theta)}$ for Niemeier lattices}

Let $N$ be a Niemeier lattice and let $Q$ be its root sublattice.
Let $V=V_N^{\orb(\theta)}$ be the holomorphic VOA given in the previous section.
If $Q=0$, then $N$ is isometric to the Leech lattice and $V$ is isomorphic to the moonshine VOA whose automorphism group is the Monster (\cite{FLM}); hence we assume $Q\neq0$.
We also assume that the $-1$-isometry of $N$ does not belong to the Weyl group of $Q$; otherwise, $\theta$ is inner, and $V$ is isomorphic to a Niemeier lattice VOA (cf.\ \cite{DGM}).
Then $Q$ is one of $14$ root lattices in Lemma \ref{L:-1G0} and (I) and (II) in the previous section hold.
In this section, we determine the groups $K(V)$, $\Out_1(V)$ and $\Out(V_2)$, which proves Theorem \ref{T:main} in Introduction.

\begin{remark}\label{R:ext} 
If $Q$ is neither $A_3^8$ nor $D_5^2A_7^2$, then $Q$ also satisfies the assumption (III) in Section 5, and by Theorem \ref{L:M1}, $\Aut (V)=C_{\Aut (V)}(z)$.
If $Q$ is $A_3^8$ or $D_5^2A_7^2$, then $N$ is constructed from the doubly even self-dual binary code $d_6^4$ or $d_{10}e_7^2$ of length $24$ by the same manner as the Leech lattice, respectively (see \cite[Fig.\ 3]{DGM}); $V$ has an extra automorphism not in $C_{\Aut (V)}(z)$ (cf.\ \cite[Chapter 11]{FLM}), and $\Aut (V)$ is greater than $C_{\Aut (V)}(z)$.
\end{remark}

\begin{remark} For a Niemeier lattice $N$ with the root sublattice $Q\neq0$, the groups $K(V_N)$ and $\Out(V_N)$ for the lattice VOA $V_N$ are determined by the following way. 
Since $\langle (V_N)_1\rangle= V_Q$ and $V_N$ is a simple current extension of $V_Q$ graded by $N/Q$, we see that $K(V_N)\cong (N/Q)^*\cong N/Q$ and $K(V_N)\subset\Inn(V_N)$.
By \cite{DN}, $\Out(V_N)=\Aut(V_N)/\Inn(V_N)\cong O(N)/W(Q)$, which is the automorphism group of the glue code $N/Q$ (\cite[Chapter 16]{CS}).
\end{remark}

\subsection{The subgroup $K(V)$}
In this subsection, we determine $K(V)$, the subgroup of $\Aut(V)$ which acts trivially on $V_1$.
It follows from $V_1\subset V_N^+$ that $K(V)$ contains the automorphism $z$ of $V$ defined in Section 5.
Let $Q=\bigoplus_{i=1}^tQ_i$ be the orthogonal sum of indecomposable root lattices.

\begin{lemma}\label{L:KV}
\begin{enumerate}[{\rm (1)}]
\item $K(V)/\langle z\rangle\subset O(\hat{N})/\langle\theta\rangle$;
\item $(K(V)/\langle z\rangle)\cap\Hom(N,\Z_2)=\{f_u\mid u\in (N\cap 2Q^*)/2N\}$;
\item $\overline{K(V)/\langle z\rangle}=\{f\in O(N)\mid f=\pm1\ {\rm on}\ Q_i,\  1\le \forall i\le t\}/\langle-1\rangle$, where $\bar{\ }$ is the map $O(\hat{N})/\langle\theta\rangle\to O(N)/\langle-1\rangle$ in the exact sequence \eqref{Exact2}.
\end{enumerate}
\end{lemma}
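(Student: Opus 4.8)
The plan is to prove the three assertions of Lemma \ref{L:KV} in order, using the structural results already established, especially Lemma \ref{Lem:gstab}, the exact sequences \eqref{Exact1}, \eqref{Exact2}, and the description of the action of $O(\hat N)/\langle\theta\rangle$ on $\Irr(V_N^+)$.

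\textbf{Step 1 (part (1)).} Since $V_1\subset V_N^+$, any $g\in K(V)$ acts trivially on $V_1$, hence in particular fixes $z$ (as $z$ is characterized by its action on $V_1$ being trivial and on $V_N(\theta)_\Z$ being $-1$; more directly, $g$ preserves the grading so $g(V_N^+)=V_N^+$, which is exactly $C_{\Aut(V)}(z)$). Thus $K(V)\subset C_{\Aut(V)}(z)$ and $K(V)/\langle z\rangle\subset C_{\Aut(V)}(z)/\langle z\rangle\cong\Aut(V_N^+)$ by Lemma \ref{Lem:gstab}(1). Now the image $\bar g\in\Aut(V_N^+)$ of $g$ acts trivially on $V_1=(V_N^+)_1$, which contains the Cartan subalgebra $\h=\C\otimes_\Z N$; hence a lift of $\bar g$ to $\Aut(V_N)$ preserves $\h$ and so lies in $O(\hat N)$ up to an inner automorphism from $\Hom(N,\Z_2)\subset O(\hat N)$ (using that $\Aut(V_N)$ is generated by $\Inn(V_N)$ and $O(\hat N)$, and that an automorphism fixing $\h$ pointwise normalizes the torus). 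Projecting back, $K(V)/\langle z\rangle\subset O(\hat N)/\langle\theta\rangle$.

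\textbf{Step 2 (part (3)).} Work inside $O(\hat N)/\langle\theta\rangle$ and apply the map $\bar{\ }$ of \eqref{Exact2} to $K(V)/\langle z\rangle$. An element $f\in O(N)/\langle-1\rangle$ lies in the image exactly when some lift of it acts trivially on $(V_N^+)_1\cong\bigoplus_i(V_{Q_i}^+)_1$. Since $(V_{Q_i}^+)_1$ is (by Table \ref{T:gQ}) a semisimple Lie algebra whose root lattice generates $Q_i$ rationally, an isometry of $N$ restricting to $Q_i$ induces the given Lie algebra automorphism; acting trivially on the simple ideal forces the restriction to $Q_i$ to be $\pm1$ (for $A_n$-type the nontrivial diagram automorphism is the $-1$ on the $A_n$ lattice composed with the longest Weyl element, which does act nontrivially on $(V_{Q_i}^+)_1$; one checks case by case from Table \ref{T:gQ} that only $\pm1$ on $Q_i$ gives the trivial action, since $+1$ clearly works and $-1$ on $A_{2n}$ induces the trivial automorphism of $B_n$, etc.). Conversely any such $f$ lifts into $K(V)/\langle z\rangle$: its action on $(V_N^+)_1$ is trivial, and by modifying the lift by a suitable $f_u\in\Hom(N,\Z_2)$ one arranges it to act trivially on all of $V_N^+$ — here one uses the explicit action of $O(\hat N)/\langle\theta\rangle$ on $\Irr(V_N^+)$ from \cite[Lemma 1.7]{Sh06}; finally, since $V$ is a simple current extension of $V_N^+$ and $V_N(\theta)_\Z$ is fixed up to isomorphism, the automorphism of $V_N^+$ lifts to $V$ by \cite[Theorem 3.3]{Sh04}. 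This yields the equality $\overline{K(V)/\langle z\rangle}=\{f\in O(N)\mid f=\pm1\text{ on }Q_i,\ 1\le\forall i\le t\}/\langle-1\rangle$.

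\textbf{Step 3 (part (2)).} The kernel of $\bar{\ }$ restricted to $K(V)/\langle z\rangle$ is $(K(V)/\langle z\rangle)\cap\Hom(N,\Z_2)$ by \eqref{Exact2}. For $u\in N^*=N$, the automorphism $f_u=\sigma_{u/2}$ acts on $(V_N^+)_1\subset\h(-1)\1$ by $h\mapsto h$ (since $f_u$ acts trivially on $\h$), so every $f_u$ already acts trivially on $V_1$; the constraint comes from requiring $f_u$ to lift to $\Aut(V)$, equivalently to fix $V_N(\theta)_\Z$, equivalently (by Lemma \ref{L:Hommod} and its twisted-type analogue in \cite[Lemma 1.7]{Sh06}) to act trivially on the twisted-type irreducible $V_N^+$-modules, which pins down $u$ modulo $2N$ to lie in $(N\cap 2Q^*)/2N$ — the condition $(u|\,\cdot\,)$ being integral on the relevant lattice translates, after unwinding, to $u\in 2Q^*$. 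Conversely each such $f_u$ manifestly lies in $K(V)$. The main obstacle is Step 2: verifying in a uniform way (rather than a brute force case check through Table \ref{T:gQ}) that an isometry of $N$ inducing the trivial automorphism on the simple ideal $(V_{Q_i}^+)_1$ must restrict to $\pm1$ on $Q_i$, and then controlling the correction term $f_u$ needed to kill the action on all untwisted-type modules; both hinge on the explicit $O(\hat N)/\langle\theta\rangle$-action on $\Irr(V_N^+)$ recorded in \cite[Lemma 1.7]{Sh06}.
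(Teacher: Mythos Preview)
Your proof has a genuine gap rooted in a misidentification of $(V_N^+)_1$. You write in Step~1 that $(V_N^+)_1$ ``contains the Cartan subalgebra $\h=\C\otimes_\Z N$'' and in Step~3 that ``$(V_N^+)_1\subset\h(-1)\1$''. Both are false: since $\bar\theta=-1$ on $N$, the element $\theta$ acts by $-1$ on $\h(-1)\1$, so $\h(-1)\1\subset V_N^-$ and $(V_N^+)_1$ is spanned by the vectors $e^\alpha+\theta(e^\alpha)$ for roots $\alpha$. Consequently your argument that a lift of $g$ to $\Aut(V_N)$ preserves $\h$ because it fixes $\h$ pointwise does not go through. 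The paper instead first lifts $g_0=g_{|V_N^+}$ to $\hat g\in C_{\Aut(V_N)}(\theta)$ via the simple current extension $V_N=V_N^+\oplus V_N^-$, then observes that every irreducible $\langle V_1\rangle$-submodule of $V_Q$ is a simple current module and hence has multiplicity one in $V_Q$; thus $\hat g$ acts on each by a scalar, and in particular $\hat g$ preserves $\h\subset V_Q$. One then invokes \cite[Section 2.4]{DN} to conclude $\hat g\in O(\hat N)$.

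The same confusion breaks your Step~3. It is not true that every $f_u$ acts trivially on $(V_N^+)_1$: one has $f_u(e^\alpha+\theta(e^\alpha))=(-1)^{(u|\alpha)}(e^\alpha+\theta(e^\alpha))$, so $f_u$ is trivial on $(V_N^+)_1$ precisely when $(u|\alpha)\in2\Z$ for all roots $\alpha$, i.e.\ $u\in 2Q^*$. This is exactly the condition $u\in(N\cap 2Q^*)/2N$ in part~(2), and it is the \emph{only} constraint --- there is no further lifting obstruction, since by Lemma~\ref{Lem:gstab}(1) every element of $\Aut(V_N^+)$ already lifts to $C_{\Aut(V)}(z)$. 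Your proposed constraint (that $f_u$ fix the twisted module $V_N(\theta)_\Z$) is therefore irrelevant and would give the wrong answer. Once part~(1) is proved correctly, the paper notes that it yields $K(V)/\langle z\rangle=\{h\in O(\hat N)/\langle\theta\rangle\mid h=\mathrm{id}\text{ on }(V_N^+)_1\}$, from which~(2) follows by the computation above; for~(3) the inclusion $\subset$ is immediate and the reverse inclusion is handled via Remark~\ref{L:-1}, which reduces to exhibiting a single lift in the three cases where $F\cong\Z_2$.
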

\begin{proof}
Let $g\in K(V)$.
Since $g$ acts trivially on $V_1$, for any irreducible $\langle V_1\rangle$-submodule $M$ of $V$, we have $M\circ g\cong M$.
It follows from Lemma \ref{Lem:gstab} (2) that $\langle V_1\rangle\cong \bigotimes_{i=1}^t V_{Q_i}^+$.
By Lemma \ref{L:decuntw}, we have $g(V_N^+)=V_N^+$ and $g(V_N(\theta)_\Z)=V_N(\theta)_\Z$.
Hence $g\in C_{\Aut(V)}(z)$.
Let $g_0$ denote the restriction of $g$ to $V_N^+$. 
Since the four (non-isomorphic) irreducible $V_N^+$-modules have different conformal weights, we have $V_N^-\circ {g_0}\cong V_N^-$.
Note that $V_N=V_N^+\oplus V_N^-$ is a $\Z_2$-graded simple current extension of $V_N^+$.
Hence, there exists a lift  $\hat{g}\in C_{\Aut(V_N)}(\theta)$ of $g_0$ by \cite[Theorem 3.3]{Sh04}.
Since any irreducible $\langle V_1\rangle$-submodule of $V_Q$ is a simple current module, its multiplicity is one, and $\hat{g}$ acts by a scalar on it.
In particular $\hat{g}$ preserves the Cartan subalgebra $\h$ of $V_Q$.
Recall from \cite[Section 2.4]{DN} that the stabilizer of $\h$ in $C_{\Aut (V_L)}(\theta)$ is $O(\hat{L})$.
Hence, $\hat{g}\in O(\hat{N})$, and $g_{0}\in O(\hat{N})/\langle\theta\rangle$.
Since $\{h\in \Aut(V)\mid h=id\ {\rm on}\ V_N^+\}=\langle z\rangle$, we have $K(V) /\langle z\rangle\subset O(\hat{N})/\langle\theta\rangle$, which proves (1).

The assertion (1) shows that $K(V)/\langle z\rangle=\{h\in O(\hat{N})/\langle\theta\rangle\mid h=id\ {\rm on}\ (V_N^+)_1\}$.
The assertion (2) follows from $V_1\cong\bigoplus_{i=1}^t(V_{Q_i}^+)_1$ and the explicit action of $\Hom(N,\Z_2)$ in \eqref{Eq:hom}.

Set $F=\{f\in O(N)\mid f=\pm1\ {\rm on}\ Q_i,\  1\le \forall i\le t\}/\langle-1\rangle.$
Clearly $\overline{K(V)/\langle z\rangle}\subset F$.
By Remark \ref{L:-1}, if $Q$ is $A_5^4D_4$, $A_9^2D_6$ or $A_{17}E_7$, then $F\cong\Z_2$ and $K(V)/\langle z\rangle$ contains a lift of the $-1$-isometry of the root lattice $D_4$, $D_6$ or $E_7$, respectively, and $\overline{K(V)/\langle z\rangle}=F$; otherwise $F=1$. Thus (3) holds.
\end{proof}

\begin{proposition}\label{P:K} Let $P^\vee$ be the coweight lattice of the Lie algebra $\g=V_1$.
Then $K(V)=\{\sigma_x\mid x\in P^\vee\}$.
Moreover, the group structure of $K(V)$ is given as in Table \ref{T:Q}.
\end{proposition}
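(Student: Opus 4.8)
The plan is to prove the two containments $K(V)\supset\{\sigma_x\mid x\in P^\vee\}$ and $K(V)\subset\{\sigma_x\mid x\in P^\vee\}$ separately, and then to compute the resulting group using the explicit description of $\Aut(V_N^+)$ and Lemma \ref{L:KV}. The inclusion $\supset$ is the easy direction: for $x\in P^\vee$, the operator $x_{(0)}$ acts semisimply on $V$ with eigenvalues given by the pairing of $x$ with weights of $V_1$-modules. Since $V$ is an integrable module for each simple ideal of $\g=V_1$ (Proposition \ref{Prop:posl}) and $P^\vee$ is by definition the lattice dual to the root lattice of $\g$ inside the Cartan, every weight appearing in $V_1$ (namely a root or $0$) pairs integrally with $x$, so $\sigma_x=\exp(-2\pi\sqrt{-1}x_{(0)})$ acts trivially on $V_1$; hence $\sigma_x\in K(V)$. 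One must also check $\sigma_x\in\Aut(V)$, i.e.\ that $x_{(0)}$ is semisimple on all of $V$, which follows since $V=V_N^{\orb(\theta)}$ is a direct sum of integrable highest-weight modules over the affine algebra attached to $\g$ at the relevant positive integral levels, as supplied by Proposition \ref{P:VOAgen1} applied to each $V_{Q_i}^+$.

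For the reverse inclusion, the key input is Lemma \ref{L:KV}, which already pins down $K(V)/\langle z\rangle$ inside $O(\hat N)/\langle\theta\rangle$: its image in $O(N)/\langle-1\rangle$ is the group $F$ of isometries acting as $\pm1$ on each $Q_i$, and its intersection with $\Hom(N,\Z_2)$ is $\{f_u\mid u\in(N\cap 2Q^*)/2N\}$. I would now match each of these pieces with an explicit inner automorphism $\sigma_x$. The toral part is handled by the identification $f_u=\sigma_{u/2}$ (recorded in Section 3.1): if $u\in N\cap 2Q^*$ then $u/2\in Q^*$, and $Q^*$ coincides with the coweight lattice $P^\vee$ of $\g=(V_Q^+)_1$ once one accounts for the level-$k_{Q_i}$ rescaling of the form — this is where Table \ref{T:gQ} enters, since the coweight lattice of the affine VOA $L_\s(k,0)$ at level $k$ is $(1/k)$ times the coroot lattice in the normalization where long roots have norm $2$. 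So I would verify case by case (or uniformly, using the structure of $(V_{Q_i}^+)_1$ in Table \ref{T:gQ}) that $\{u/2\mid u\in N\cap 2Q^*\}=P^\vee\cap\h$ modulo the translations already realized, so that the $\Hom(N,\Z_2)$-part of $K(V)$ is exactly $\{\sigma_x\mid x\in P^\vee,\ \sigma_x\ \text{toral}\}$. For the remaining quotient $F$, in the three cases $A_5^4D_4$, $A_9^2D_6$, $A_{17}E_7$ where $F\cong\Z_2$, the nontrivial element is a lift of the $-1$-isometry of the $D_4$, $D_6$, or $E_7$ component; this $-1$-isometry lies in the Weyl group of that component (it is of type $D_{2n}$ or $E_7$), hence is realized by $\exp$ of nilpotent-combination elements of $\g$, and more to the point it coincides with $\sigma_x$ for a suitable minuscule coweight $x\in P^\vee$ (e.g.\ the coweight $\lambda$ with $\sigma_\lambda$ inducing the longest-element action on the corresponding factor). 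Thus every coset representative of $K(V)/\langle z\rangle$ is of the form $\sigma_x$, and conversely $z$ itself is $\sigma_x$ for an appropriate $x\in P^\vee$ (the ``deep hole'' coweight whose pairing with twisted-sector weights is half-integral), giving $K(V)=\{\sigma_x\mid x\in P^\vee\}$.

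Finally, to obtain the group structure listed in Table \ref{T:Q}, I would compute $\{\sigma_x\mid x\in P^\vee\}\cong P^\vee/\Lambda_0$, where $\Lambda_0=\{x\in P^\vee\mid\sigma_x=\id\ \text{on}\ V\}$ is the lattice of coweights pairing integrally with every weight occurring in $V=V_N^+\oplus V_N(\theta)_\Z$. The untwisted part $V_N^+$ forces $x$ to pair integrally with $N$, i.e.\ $x\in N^*=N$; the twisted part $V_N(\theta)_\Z$ imposes the further condition coming from the $\theta$-twisted weights, which sit in $N/2+\text{(shift)}$, giving a half-integrality constraint. So $\Lambda_0=N\cap(\text{appropriate index-2 refinement})$ and $K(V)\cong P^\vee/\Lambda_0$; since $P^\vee=\bigoplus_i Q_i^*$ by Table \ref{T:gQ} with the correct rescalings, this is a finite abelian group that one reads off component by component, producing $\Z_2$, $\Z_2^5$, $\Z_4\times\Z_2^3$, $\Z_4^2$, $\Z_8$, etc., exactly as tabulated. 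The main obstacle I anticipate is the bookkeeping in the second paragraph: correctly identifying $P^\vee$ of the affine VOA $\langle V_1\rangle$ (with its level-dependent normalization from Table \ref{T:gQ}) with the lattice-theoretic object $Q^*$ scaled appropriately, and checking that the $\Z_2$-quotient $F$ in the three special cases is genuinely realized by an inner $\sigma_x$ rather than only by an outer lift — this requires knowing that the relevant $-1$-isometry is inner for the affine VOA, which is true precisely because the corresponding simple factor is of a type ($D_{n}$ with the right parity, or $A_7$) whose Weyl group contains $-1$ or whose longest Weyl element acts as the needed permutation, and this must be checked against Table \ref{T:gen}.
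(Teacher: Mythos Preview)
Your overall strategy matches the paper's: establish the easy containment $\{\sigma_x\mid x\in P^\vee\}\subset K(V)$, then use Lemma \ref{L:KV} to bound $K(V)$ from above and show every element is realized as some $\sigma_x$. The paper, however, carries out the second half purely computationally: from Lemma \ref{L:KV} (and the data in \cite{CS}) it determines $|K(V)|$ in each of the 14 cases, and then exhibits explicit coweights $x$ (recorded in Table \ref{T:K(V)}) whose associated $\sigma_x$ generate a subgroup of $K(V)$ of that order. The verification that these particular $\sigma_x$'s reproduce the generators of $K(V)$ supplied by Lemma \ref{L:KV} is done by computing how $\sigma_x$ acts on irreducible $\langle V_1\rangle$-submodules of $V_N^+$, using the dictionaries in Appendix A between $\Irr(V_{Q_i}^+)$ and $\Irr(L_{\g_i}(k_{Q_i},0))$ together with the glue-code generators from \cite{CS}.

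Your more conceptual route has a genuine gap at the point where you write ``$f_u=\sigma_{u/2}$ \dots\ and $Q^*$ coincides with the coweight lattice $P^\vee$ of $\g=(V_Q^+)_1$ once one accounts for the level-$k_{Q_i}$ rescaling.'' The identity $f_u=\sigma_{u/2}$ holds in $\Aut(V_N)$, with $u/2$ lying in $\h=\C\otimes_\Z N$, the Cartan subalgebra of $(V_N)_1$. But $\theta$ acts as $-1$ on $\h$, so $\h\cap(V_N^+)_1=0$: the Cartan subalgebra $\h_\g$ of $\g=(V_N^+)_1$ (and hence $P^\vee\subset\h_\g$) lives in a completely different subspace, spanned by vectors of the form $e^\alpha+\theta(e^\alpha)$. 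Thus $u/2\notin P^\vee$, and there is no direct lattice identification $Q^*\cong P^\vee$; what you need instead is that the restriction of $\sigma_{u/2}$ to $V_N^+$, lifted to $V$, happens to coincide with $\sigma_x$ for some $x\in\h_\g$. This is not automatic and is exactly what the paper establishes by comparing the two actions on $\Irr(\langle V_1\rangle)$ via the tables in Appendix A. The same issue affects your treatment of the quotient $F$ and of the final group-structure computation $P^\vee/\Lambda_0$: you describe $\Lambda_0$ in terms of pairing with $N$ and with twisted-sector weights, but those pairings take place in $\h$, not in $\h_\g$. To repair your argument you would either have to carry out the paper's module-by-module comparison, or supply an abstract argument that every automorphism of $V$ trivial on $V_1$ is of the form $\sigma_x$ for $x\in P^\vee$ (e.g.\ by showing that $K(V)$ is naturally the character group of the simple-current grading of $V$ over $\langle V_1\rangle$ and that $x\mapsto\sigma_x$ surjects onto that character group); the paper does not attempt the latter.
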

\begin{proof} It is clear that $\sigma_x\in K(V)$ for all $x\in P^\vee$.
By Lemma \ref{L:KV} and the properties of $N$ and $O(N)$ in \cite[Chapters 16 and 18]{CS}, the order of $K(V)$ is determined as in Table \ref{T:K(V)}.
By using Tables \ref{T:A2} to \ref{T:E7} in Appendix A, along with the generators of the glue code $N/Q$ in \cite[Section 18.4]{CS}, we can describe the module structure of $V_N^+$ for simple affine VOAs.
Then for $x\in P^\vee$, we know the action of $\sigma_x$ on the irreducible $\langle V_1\rangle$-submodule of $V_N^+$ as an element of $O(\hat{N})$; we see that $\{\sigma_x\mid x\in P^\vee\}$ contains generators of $K(V)$ (see Lemma \ref{L:KV} and Table \ref{T:K(V)}).
Thus the group structure of $K(V)$ is determined as in Table \ref{T:Q}.
Note that the vectors for the case $A_3^8$ in Table \ref{T:K(V)} are described with respect to some specified coordinate (see Section \ref{S:A38} below).
\end{proof}
\begin{corollary}\label{C:KV} The group $K(V)$ is contained in $\Inn(V)$.
\end{corollary}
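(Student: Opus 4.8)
The goal is Corollary \ref{C:KV}: $K(V)\subset\Inn(V)$. By Proposition \ref{P:K}, which we are allowed to assume, $K(V)=\{\sigma_x\mid x\in P^\vee\}$, where $P^\vee$ is the coweight lattice of $\g=V_1$. So the plan is simply to observe that each generator $\sigma_x=\exp(-2\pi\sqrt{-1}x_{(0)})$ lies in $\Inn(V)$ by its very definition. Recall from Section 2.3 that for $v\in V_1$ with $v_{(0)}$ semisimple on $V$, the element $\sigma_v=\exp(-2\pi\sqrt{-1}v_{(0)})$ was \emph{defined} to be an element of $\Inn(V)$, the normal subgroup of $\Aut(V)$ generated by the inner automorphisms $\exp(w_{(0)})$, $w\in V_1$. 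Thus the only thing to check is that for $x\in P^\vee\subset\h\subset V_1$ the operator $x_{(0)}$ acts semisimply on $V$, so that $\sigma_x$ is well-defined as written.

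Concretely, I would argue as follows. Since $V_1$ is semisimple (indeed $V_1\cong\bigoplus_{i=1}^t(V_{Q_i}^+)_1$ by Lemma \ref{Lem:gstab}(1)) and $V$ is strongly regular, Proposition \ref{Prop:posl} shows that $V$ is an integrable module for the affine representation of each simple ideal of $V_1$, hence an integrable module for the affine algebra attached to $V_1$ as a whole. On such an integrable module the Cartan elements $h_{(0)}$, $h\in\h$, act semisimply with a weight-space decomposition by integral weights; in particular $x_{(0)}$ is semisimple for every $x$ in the (co)weight lattice. Therefore $\sigma_x=\exp(-2\pi\sqrt{-1}x_{(0)})$ is a genuine inner automorphism, i.e. $\sigma_x\in\Inn(V)$. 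Since $\Inn(V)$ is a subgroup of $\Aut(V)$ and $K(V)$ is generated by the $\sigma_x$, $x\in P^\vee$, we conclude $K(V)\subset\Inn(V)$.

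I expect there is essentially no obstacle here: the statement is a direct corollary, and the content has already been done in Proposition \ref{P:K}. The only mildly non-formal point is the semisimplicity of $x_{(0)}$, which is handled by integrability as above (alternatively, one can see it directly on the lattice model: on $V_N$ the operator $x_{(0)}=x(0)$ acts on $\alpha_1(-n_1)\cdots\alpha_m(-n_m)\otimes e^\beta$ by the scalar $(x|\beta)$, hence is semisimple, and this descends to $V_N^+$ and to the twisted sector $V_N(\theta)_\Z$ since the action of $\h(0)$ on the twisted module is likewise diagonalizable). Either way the corollary follows in one line from Proposition \ref{P:K} together with the definition of $\sigma_x$ and of $\Inn(V)$.
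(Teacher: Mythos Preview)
Your argument is correct and is exactly the paper's intended reasoning: the corollary is stated without proof because it follows immediately from Proposition~\ref{P:K}, since each $\sigma_x$ with $x\in P^\vee\subset V_1$ is by definition an element of $\Inn(V)$. Your additional remarks on the semisimplicity of $x_{(0)}$ are fine (and implicit in the paper's use of the notation $\sigma_x$), though note that Proposition~\ref{P:K} already gives $K(V)=\{\sigma_x\mid x\in P^\vee\}$ as a set, not merely as a generating set.
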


\begin{Small}
\begin{table}[bht]
\caption{Vectors in $P^\vee$ s.t. the associated inner automorphisms generate $K(V)$} \label{T:K(V)}
\begin{tabular}{|c|c|c|c|c|c|}
\hline
$Q$&$V_1$&$|K(V)|$&$z$&$(K(V)/\langle z\rangle)\cap{\Hom} (N,\Z_2)$&${\overline{K(V)/\langle z\rangle}}$\\ \hline
$A_2^{12}$&$A_{1,4}^{12}$&$2$&$(\Lambda_1,0^{11})$&$1$&$1$\\ \hline
$A_3^8$&$A_{1,2}^{16}$&$2^5$&$(\Lambda_1,\Lambda_1,0^{14})$& $(\Lambda_1,0^{15})$, $(0^2,\Lambda_1,0^{13})$,&$1$\\
&&&&$(0^4,\Lambda_1,0^{11})$, $(0^8,\Lambda_1,0^7)$
& \\ \hline
$A_4^6$&$B_{2,2}^6$&$2$&$(\Lambda_1,0^5)$&$1$&$1$\\ \hline
$A_5^4D_4$&$A_{3,2}^4A_{1,1}^4$&$2^5$&$(2\Lambda_1,0^7)$&$(\Lambda_1,0^7)$, $(0,\Lambda_1,0^6)$, $(0^2,\Lambda_1,0^5)$&$(0^4,\Lambda_1^4)$\\ \hline
$A_6^4$&$B_{3,2}^{4}$&$2$&$(\Lambda_1,0^3)$&$1$&$1$\\ \hline
$A_7^2D_5^2$&$D_{4,2}^2B_{2,1}^{4}$&$2^3$&$(\Lambda_1,0^5)$&$(\Lambda_4,0^5),(0^4,\Lambda_1,0)$&$1$\\ \hline
$A_8^3$&$B_{4,2}^{3}$&$2$&$(\Lambda_1,0,0)$&$1$&$1$\\ \hline
$A_9^2D_6$&$D_{5,2}^{2}A_{3,1}^{2}$&$2^4$&$(0^2,2\Lambda_3,2\Lambda_3)$&$(0^2,\Lambda_3,\Lambda_3),(0^2,2\Lambda_3,0)$&$(0^2,\Lambda_3,0)$\\ \hline
$E_6^4$&$C_{4,1}^{4}$&$2$&$(\Lambda_4,0^3)$&$1$&$1$\\ \hline
$A_{11}D_7E_6$&${D_{6,2}}B_{3,1}^{2}C_{4,1}$&$2^2$&$(\Lambda_1,0,0)$&$(\Lambda_6,0,0)$&$1$\\ \hline
$A_{12}^2$&$B_{6,2}^{2}$&$2$&$(\Lambda_1,0)$&$1$&$1$\\ \hline
$A_{15}D_9$&${D_{8,2}}B_{4,1}^{2}$&$2^2$&$(\Lambda_1,0,0)$&$(\Lambda_8,0,0)$&$1$\\ \hline
$A_{17}E_7$&${D_{9,2}}{A_{7,1}}$&$2^3$&$(0,\Lambda_4)$&$(0,\Lambda_2)$&$(0,\Lambda_1)$\\ \hline
$A_{24}$&${B_{12,2}}$&$2$&$(\Lambda_1)$&$1$&$1$\\ \hline
\end{tabular}
\end{table}
\end{Small}

\subsection{The groups $\Out_1(V)$ and $\Out_2(V)$}
In this subsection, we determine the groups $\Out_1(V)$ and $\Out_2(V)$.
Let $V_1=\bigoplus_{i=1}^s\g_i$ be the direct sum of simple ideals.

\begin{lemma}\label{L:O1}
If $Q_i\not\in\{A_{2n-1}, D_{2n}, E_7, E_8\mid n\ge3\}$ for all $1\le i\le t$, then $O_1(V)=1$.
\end{lemma}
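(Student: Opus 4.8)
The plan is to reduce the statement to a direct inspection of Table~\ref{T:gQ}. First I would recall the relevant definitions: since condition (I) holds in the present setting, Lemma~\ref{Lem:gstab}(1) gives $V_1\cong\bigoplus_{i=1}^t(V_{Q_i}^+)_1$, and by \eqref{Def:O12} the group $\Out_1(V)$ is the subgroup of $\Out(V)\subseteq\Out(V_1)$ consisting of those elements that fix every simple ideal $\g_j$ of $V_1$. Any such element restricts on each $\g_j$ to a Lie algebra automorphism, and hence induces an element of the diagram automorphism group $\Gamma(\g_j)=\Aut(\g_j)/\Inn(\g_j)$; so $\Out_1(V)$ embeds into $\prod_j\Gamma(\g_j)$, and it suffices to prove that $\Gamma(\g_j)$ is trivial for every $j$, i.e.\ that no simple ideal of $V_1$ is of type $A_m$ $(m\ge2)$, $D_m$ $(m\ge4)$ or $E_6$.

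Next I would enumerate the possible $Q_i$. An indecomposable root lattice is of type $A_\ell$ $(\ell\ge1)$, $D_m$ $(m\ge4)$ or $E_n$ $(n=6,7,8)$; discarding $A_1$ (condition (II)) and, by hypothesis, every lattice in $\{A_{2n-1},D_{2n},E_7,E_8\mid n\ge3\}$, the only possibilities that remain are $A_2$, $A_{2n}$ $(n\ge2)$, $A_3$, $D_4$, $D_{2n+1}$ $(n\ge2)$ and $E_6$. Reading off Table~\ref{T:gQ}, the corresponding Lie algebras $(V_{Q_i}^+)_1$ are $A_1$, $B_n$, $A_1^2$, $A_1^4$, $B_n^2$ and $C_4$ respectively, so in every case each simple ideal of $V_1$ is of type $A_1$, $B_n$ $(n\ge2)$ or $C_4$. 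Since none of these types admits a nontrivial diagram automorphism, $\Gamma(\g_j)=1$ for all $j$, whence $\Out_1(V)=1$.

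I do not expect any genuine obstacle here: the content is entirely contained in Table~\ref{T:gQ}, and the only point needing a little care is the bookkeeping of which indecomposable root lattices survive the hypothesis, together with the conventions $D_2=A_1^2$ and $D_3=A_3$ used in that table (note that $D_3=A_3$ never occurs in our list, since the only indecomposable root lattice whose $V^+$ has weight-one part of type $D_3$ is $A_5$, which is excluded). An equivalent way to organize the same argument is via the last column of Table~\ref{T:gen}: for each of the allowed $Q_i$ the group $\Out(\g)$ is either trivial or a pure permutation group of isomorphic $A_1$- or $B_n$-ideals, hence lies entirely in the "permutation" part of $\Out(V_1)$ and contributes nothing to $\Out_1(V)$.
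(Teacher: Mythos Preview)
Your proof is correct and takes essentially the same approach as the paper: both arguments amount to inspecting Table~\ref{T:gQ} to verify that, under the hypothesis on the $Q_i$, every simple ideal of $V_1$ is of type $A_1$, $B_n$ or $C_4$, hence has trivial diagram automorphism group, forcing $\Out_1(V)=1$. Your write-up is simply a more explicit unpacking of the paper's one-sentence proof.
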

\begin{proof} By Table \ref{T:gQ}, for any simple ideal of $V_1$, its type is neither $A_n$ $(n\ge2)$, $D_n$ ($n\ge4$) nor $E_6$, which shows that $\Out_1(V)=1$.
\end{proof}

\begin{lemma}\label{L:OVL+2}
The group $\Aut(V_N^+)$ preserves the set $\{(V_{Q_i}^+)_1\mid 1\le i\le t\}$ of semisimple ideals of $V_1$.
Moreover, its action is the same as that of $G_2(N)$ on $\{Q_i\mid 1\le i\le t\}$.
\end{lemma}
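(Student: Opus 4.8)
The plan is to exploit the characterization of $\Aut(V_N^+)$ from Lemma~\ref{Lem:AutVL+} (valid since $N$ has rank $24 \geq 16$): every automorphism of $V_N^+$ lifts, modulo $\langle\theta\rangle$, to an element of $C_{\Inn(V_N)}(\theta)O(\hat N)$, i.e. it is a product of an inner automorphism of $V_N$ commuting with $\theta$ and an element of $O(\hat N)$. First I would observe that an inner automorphism coming from $V_1 = (V_N^+)_1$ acts trivially on the set of simple ideals of $V_1$, since $\Inn(V_1)$ preserves each $\g_i$; hence it suffices to treat the image of $O(\hat N)/\langle\theta\rangle$ under $\varphi_1$. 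Any $g \in O(\hat N)$ projects to an isometry $\bar g \in O(N)$, and $O(N) = W(Q) : H(N)$ with $H(N) = H(Q)\cap O(N)$ (Section~2.1). The Weyl group $W(Q) \subset \Inn(V_N)$ acts trivially on each $\g_i$, so again only the $H(N)$-part matters. An element of $H(N)$ permutes the indecomposable components $\{Q_i\}$ and acts on each by a diagram automorphism; under the functor $V_{Q_i}^+$ this corresponds precisely to permuting the tensor factors $\{V_{Q_i}^+\}$ and hence the semisimple ideals $\{(V_{Q_i}^+)_1\}$ of $V_1 = \bigoplus_i (V_{Q_i}^+)_1$. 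This gives the first assertion and shows the induced permutation action on $\{(V_{Q_i}^+)_1\}$ factors through $H(N)/G_1(N) = G_2(N)$.

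Next I would pin down the correspondence between the $G_2(N)$-action on $\{Q_i\}$ and the $\Aut(V_N^+)$-action on $\{(V_{Q_i}^+)_1\}$. The subtlety is that the ideals $(V_{Q_i}^+)_1$ need \emph{not} be simple: by Table~\ref{T:gQ}, for $Q_i \cong D_{2n}$ (resp. $D_{2n+1}$, resp. $D_4$, resp. $E_7$) the Lie algebra $(V_{Q_i}^+)_1$ splits as $D_n^2$ (resp. $B_n^2$, resp. $A_1^4$, resp. $A_7$) — but only $D_4^6$ and its relatives and $A_{17}E_7$, $A_9^2D_6$, $A_5^4D_4$ among our $14$ lattices have such components. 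So I would argue that although $\Aut(V_N^+)$ may further permute the simple summands \emph{within} a single $(V_{Q_i}^+)_1$ (this is exactly the content of $\Out_1(V)$, handled separately), the coarser action on the set $\{(V_{Q_i}^+)_1\}$ of the full semisimple ideals — equivalently, on the set of subVOAs $\langle (V_{Q_i}^+)_1\rangle \cong V_{Q_i}^+$, which by Lemma~\ref{R:t3}/condition~(III) are intrinsically distinguished as tensor factors — is governed solely by the component-permutation part $G_2(N)$. Here condition~(III) (which holds for all $14$ lattices since none is a sum of copies of $A_3^2$, $D_n^2$, $A_7$, $D_8$) guarantees that no automorphism can mix untwisted-type factors into twisted-type ones, so $\Aut(V_N^+)$ cannot ``accidentally'' map a tensor factor $V_{Q_i}^+$ off the set $\{V_{Q_j}^+\}$.

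Finally, I would check surjectivity: every permutation in $G_2(N)$ is realized. Given $\pi \in G_2(N)$, lift it to $h \in H(N) \subset O(N)$, then lift $h$ to $\hat h \in O(\hat N)$ via the exact sequence~\eqref{Exact1}; the image of $\hat h$ in $O(\hat N)/\langle\theta\rangle \subset \Aut(V_N^+)$ induces exactly $\pi$ on $\{(V_{Q_i}^+)_1\}$. Combining with the first two paragraphs, the $\Aut(V_N^+)$-action on $\{(V_{Q_i}^+)_1\}$ coincides with the $G_2(N)$-action on $\{Q_i\}$.

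The main obstacle I anticipate is the bookkeeping in the second paragraph: one must carefully separate the ``within a non-simple ideal'' permutations (the $\Out_1$ phenomenon for $D$-type and $E_7$ components) from the ``between ideals'' permutations, and verify that the latter see only $G_2(N)$ and not some larger group — this requires knowing that a diagram automorphism of $Q_i$ induces on $(V_{Q_i}^+)_1$ only inner automorphisms together with possibly a permutation of the \emph{internal} simple summands, never a permutation sending $(V_{Q_i}^+)_1$ to a different $(V_{Q_j}^+)_1$. That follows from the explicit generator tables (Table~\ref{T:gen}) and the fact that $H(N)$ already records all component-permutations, but spelling it out cleanly is the delicate point.
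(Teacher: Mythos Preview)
Your approach in the first and third paragraphs is essentially the paper's: invoke Lemma~\ref{Lem:AutVL+} to write $\Aut(V_N^+)=(C_{\Inn(V_N)}(\theta)\,O(\hat N))/\langle\theta\rangle$, observe that $\Inn(V_N)$ preserves each $V_{Q_i}$ (hence each $(V_{Q_i}^+)_1$), and that $O(\hat N)$ permutes the $(V_{Q_i})_1$ exactly as $G_2(N)$ permutes the $Q_i$. The paper's proof is four sentences because this is already the whole argument.

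Your second paragraph is both unnecessary and contains a factual error. It is unnecessary because Lemma~\ref{Lem:AutVL+} already tells you that \emph{every} automorphism of $V_N^+$ arises from $C_{\Inn(V_N)}(\theta)O(\hat N)$; there is no residual ``accidental'' automorphism that could mix tensor factors in an unexpected way, so no appeal to condition~(III) or Lemma~\ref{R:t3} is needed here. The factual error: you assert that condition~(III) holds for all $14$ lattices, but it fails for $Q\cong A_3^8$ (which is $(A_3^2)^4$) and for $Q\cong A_7^2D_5^2$ (which is $A_7\oplus A_7\oplus D_5^2$) --- precisely the two exceptional cases in Remark~\ref{R:ext}. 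Since your first paragraph already establishes the result via Lemma~\ref{Lem:AutVL+} for all $14$ lattices uniformly, the error is harmless, but you should simply drop the second paragraph.
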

\begin{proof} By \eqref{Exact1} (see also Section 2.1), $O(\hat{N})$ acts on $\{(V_{Q_i})_1\mid 1\le i\le t\}$ as the permutation group $G_2(N)$ on $\{Q_i\mid 1\le i\le t\}$.
Hence $O(\hat{N})/\langle\theta\rangle$ also acts on $\{(V_{Q_i}^+)_1\mid 1\le i\le t\}$ by the same manner.
On the other hand, $\Inn(V_N)$ preserves $V_{Q_i}$ for all $i$.
Thus we obtain this lemma by Lemma \ref{Lem:AutVL+}.
\end{proof}

\begin{lemma}\label{L:O2}
\begin{enumerate}[{\rm (1)}]
\item $\Out(V_N^+)\subset \Out(V)$.
\item If $Q$ is neither $A_3^8$ nor $A_7^2D_5^2$, then $\Out(V)=\Out(V_N^+)$.
\item If $Q_i\notin\{A_3,D_{n}\mid n\ge4\}$ for all $1\le i\le t$, then $\Out_2(V)\cong G_2(N)$.
\end{enumerate}
\end{lemma}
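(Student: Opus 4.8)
## Proof proposal for Lemma \ref{L:O2}

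The plan is to relate $\Out(V)$ to $\Out(V_N^+)$ via the central extension relating $\Aut(V)$ and $\Aut(V_N^+)$, and then to relate $\Out(V_N^+)$ to $G_2(N)$ using the structure results already assembled in the paper (Lemmas \ref{Lem:AutVL+}, \ref{L:OVL+2}, \ref{Lem:gstab}).

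For part (1), I would start from the inclusion $C_{\Aut(V)}(z)/\langle z\rangle\cong\Aut(V_N^+)$ of Lemma \ref{Lem:gstab}(1). Under this isomorphism, $C_{\Aut(V)}(z)$ acts on $V_1=(V_N^+)_1$ exactly as $\Aut(V_N^+)$ does, since $z$ acts trivially on $V_1$; hence the image of $C_{\Aut(V)}(z)$ under $\varphi_2\circ\varphi_1$ (the map defining $\Out(V)$ in \eqref{Def:KO}) coincides with $\Out(V_N^+)$. Because $C_{\Aut(V)}(z)\subset\Aut(V)$, this gives $\Out(V_N^+)\subset\Out(V)$. I should also note that $\Inn(V_N^+)$ maps into $\Inn(V_1)$, so the outer quotients match up correctly; this is automatic from $\varphi_1(\Inn(V))=\Inn(V_1)$ recorded in Section 2.3.

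For part (2), the hypothesis that $Q$ is neither $A_3^8$ nor $A_7^2D_5^2$ means, by Remark \ref{R:ext}, that assumption (III) of Section 5 holds, so Theorem \ref{L:M1} gives $\Aut(V)=C_{\Aut(V)}(z)$. Combined with part (1) and the observation in the previous paragraph that $C_{\Aut(V)}(z)$ and $\Aut(V_N^+)$ have the same image in $\Out(V_1)$, we get $\Out(V)=\Out(V_N^+)$. For part (3), I would use Lemma \ref{L:OVL+2}: the action of $\Aut(V_N^+)$ on the set $\{(V_{Q_i}^+)_1\}$ of simple-ideal blocks is the same as the action of $G_2(N)$ on $\{Q_i\}$, and this action is surjective onto $G_2(N)$ since $O(\hat N)/\langle\theta\rangle\subset\Aut(V_N^+)$ realizes all of $G_2(N)$. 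Now $\Out_2(V)=\Out(V)/\Out_1(V)$ is by definition the permutation action of $\Out(V)$ on the simple ideals $\{\g_j\}$ of $V_1$. Under the hypothesis $Q_i\notin\{A_3,D_n\mid n\ge4\}$, each block $(V_{Q_i}^+)_1$ is itself simple (inspect Table \ref{T:gQ}: the types $A_1$, $B_n$, $C_4$, $A_7$ arising are simple, whereas the excluded $A_3$, $D_{2n}$, $D_{2n+1}$ give $A_1^2$, $D_n^2$, $B_n^2$), so the partition of $\{\g_j\}$ into blocks is trivial and the permutation action on $\{\g_j\}$ equals the permutation action on $\{Q_i\}$. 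Therefore $\Out_2(V)$ is the image of $\Out(V)$ in $\Sym(\{Q_i\})$, which by Lemma \ref{L:OVL+2} and the surjectivity onto $G_2(N)$ is exactly $G_2(N)$.

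The main obstacle is the bookkeeping in part (3): one must be careful that ``$\Out(V)$ preserving every simple ideal'' and ``$\Out(V)$ preserving every block $(V_{Q_i}^+)_1$'' coincide, which is precisely what the hypothesis $Q_i\notin\{A_3,D_n\}$ guarantees via Table \ref{T:gQ}; and one must verify that the whole of $\Out(V)$, not just the subgroup coming from $O(\hat N)$, acts through $G_2(N)$ on the blocks --- but this follows because, by part (1) and Lemma \ref{L:OVL+2}, $\Out(V_N^+)$ already acts on the blocks only through $G_2(N)$, and in the cases at hand $\Out(V)$ is either equal to $\Out(V_N^+)$ (part (2)) or differs from it only by the extra involution of Remark \ref{R:ext}, whose action on $\{Q_i\}$ must still lie in $G_2(N)$ since it preserves $V_N^+$ up to the known structure. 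For the excluded cases $A_3^8$ and $A_7^2D_5^2$ the lemma correctly makes no claim in parts (2),(3), and indeed there $Q_i\in\{A_3,D_5\}$ so the hypothesis of (3) fails anyway.
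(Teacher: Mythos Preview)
Your proof is correct and takes essentially the same approach as the paper: part (1) from Lemma \ref{Lem:gstab}(1) together with $z\in K(V)$; part (2) from Theorem \ref{L:M1}; and part (3) from Table \ref{T:gQ} (each $(V_{Q_i}^+)_1$ is simple under the hypothesis) combined with part (2) and Lemma \ref{L:OVL+2}.

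One small correction in your discussion of part (3): you need not, and should not, entertain the case where $\Out(V)$ differs from $\Out(V_N^+)$ by the extra automorphism of Remark \ref{R:ext}. That automorphism does \emph{not} preserve $V_N^+$, so your stated justification (``it preserves $V_N^+$ up to the known structure'') would fail. Fortunately, as you yourself observe in the last sentence, the hypothesis of (3) already rules out $Q\cong A_3^8$ (since $A_3$ is excluded) and $Q\cong A_7^2D_5^2$ (since $D_5$ is excluded), so part (2) applies and $\Out(V)=\Out(V_N^+)$; the digression can simply be deleted.
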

\begin{proof} The assertion (1) follows from Lemma \ref{Lem:gstab} (1), $V_1=(V_N^+)_1$ and $z\in K(V)$.

If $Q$ is neither $A_3^8$ nor $A_7^2D_5^2$, then by Theorem \ref{L:M1} (cf.\ Remark \ref{R:ext}), $\Aut(V)/\langle z\rangle\cong\Aut(V_N^+)$.
Hence (2) holds.

Assume that $Q_i\notin\{A_3,D_{n}\mid n\ge4\}$ for all $1\le i\le t$.
By Table \ref{T:gQ}, every $(V_{Q_i}^+)_1$ is a simple ideal of $V_1$.
Hence $\{(V_{Q_i}^+)_1\mid 1\le i\le t\}=\{\g_i\mid 1\le i\le s\}$ and $s=t$.
By (2) and Lemma \ref{L:OVL+2}, we have $\Out_2(V)\cong \Out_2(V_N^+)\cong G_2(N)$.
\end{proof}

Combining Lemmas \ref{L:O1} and \ref{L:O2} (3), we obtain the following:

\begin{proposition}\label{P:OV} If $Q\cong A_2^{12}$, $A_4^6$, $A_6^4$, $A_8^3$, $E_6^4$, $A_{12}^2$ or $A_{24}$, then $O_1(V)=1$ and $O_2(V)\cong G_2(N)$.
In particular, the group structures of $O_1(V)$ and $O_2(V)$ are given as in Table \ref{T:Q}.
\end{proposition}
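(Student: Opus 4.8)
The plan is to deduce everything from the two structural lemmas already in hand, Lemma \ref{L:O1} and Lemma \ref{L:O2}(3), once one checks that all seven root lattices in the statement satisfy their hypotheses; the only real work is bookkeeping.

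First I would list the indecomposable components: for $Q\cong A_2^{12}$, $A_4^6$, $A_6^4$, $A_8^3$, $E_6^4$, $A_{12}^2$, $A_{24}$ every component $Q_i$ has type $A_2$, $A_4$, $A_6$, $A_8$, $A_{12}$, $A_{24}$ or $E_6$. In particular no $Q_i$ belongs to $\{A_{2n-1},D_{2n},E_7,E_8\mid n\ge 3\}$, so Lemma \ref{L:O1} gives $\Out_1(V)=1$. Concretely, by Table \ref{T:gQ} each simple ideal $(V_{Q_i}^+)_1$ is of type $A_1$, $B_n$ or $C_4$, and none of these Lie algebras admits a nontrivial diagram automorphism, which is the underlying reason.

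Next I would observe that none of these $Q_i$ equals $A_3$ or $D_n$ with $n\ge 4$, so the hypothesis of Lemma \ref{L:O2}(3) holds and $\Out_2(V)\cong G_2(N)$. It is worth recording that these seven lattices are also neither $A_3^8$ nor $A_7^2D_5^2$, so that Lemma \ref{L:O2}(2), and hence the identification $\Aut(V)/\langle z\rangle\cong\Aut(V_N^+)$ from Theorem \ref{L:M1}, is available; this is precisely the ingredient the proof of Lemma \ref{L:O2}(3) invokes.

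Finally I would read off $G_2(N)$ from Table \ref{T:G2N}: for $A_2^{12},A_4^6,A_6^4,A_8^3,E_6^4,A_{12}^2,A_{24}$ these are $M_{12},\Sym_5,\Alt_4,\Sym_3,\Sym_4,\Sym_2,1$ respectively, which are exactly the entries in the $\Out_2(V)$ column of Table \ref{T:Q} (with $\Out_1(V)=1$ throughout). The only place an error could creep in is in mismatching a $Q$ with its component types or transcribing $G_2(N)$ incorrectly; there is no genuine obstacle here, since all the substance lives in the preceding lemmas.
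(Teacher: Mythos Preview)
Your proposal is correct and follows exactly the paper's approach: the proposition is stated there as an immediate consequence of Lemmas~\ref{L:O1} and~\ref{L:O2}(3), and your argument simply verifies that the seven root lattices satisfy their hypotheses before reading off $G_2(N)$ from Table~\ref{T:G2N}.
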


Let us consider the remaining cases.
Set $N_0=N\cap(Q/2)$.

\begin{lemma}\label{L:N0}
\begin{enumerate}[{\rm (1)}]
\item $V_{N_0}^+$ is the direct sum of all simple current $\langle V_1\rangle$-submodules of $V_N^+$.
\item $\Aut(V)$ preserves $V_{N_0}^+$.
\end{enumerate}
\end{lemma}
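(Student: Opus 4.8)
The plan is to establish (1) by decomposing $V_N^+$ over the full subVOA $\langle V_1\rangle\cong\bigotimes_{i=1}^tV_{Q_i}^+$ and identifying the simple current summands, and then to deduce (2) from the fact that automorphisms permute simple current submodules.

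For (1), let $M$ be an irreducible $\langle V_1\rangle$-submodule of $V_N^+$. By Lemma \ref{L:decuntw}, $M\cong\bigotimes_{i=1}^tM^i$ with each $M^i$ an irreducible $V_{Q_i}^+$-module of untwisted type; since fusion products over $\bigotimes_iV_{Q_i}^+$ are computed factorwise, $M$ is a simple current if and only if every $M^i$ is, which by Proposition \ref{P:ADL} means $M^i\in(\mu_i)^{\varepsilon_i}$ for some $\mu_i\in Q_i^*\cap(Q_i/2)$ and $\varepsilon_i\in\{\pm\}$. From the lattice structure of $V_N$, such a module can occur in $V_N$ only as a submodule supported on the coset $\lambda+Q$ with $\lambda=\sum_i\mu_i$, so $M\subset V_N$ forces $\lambda\in N$; together with $2\lambda\in Q$ this gives $\lambda\in N\cap(Q/2)=N_0$, and since $M$ is $\theta$-fixed we get $M\subset V_{\lambda+Q}^\theta\subset V_{N_0}^+$. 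Conversely, for $\lambda\in N_0$ we may write $\lambda=\sum_i\mu_i$ with $\mu_i\in Q_i^*\cap(Q_i/2)$; then $V_{\lambda+Q}=\bigotimes_i(V_{\mu_i+Q_i}^+\oplus V_{\mu_i+Q_i}^-)$ as $\langle V_1\rangle$-modules, and its $\theta$-fixed part, namely the sum of the factors $\bigotimes_iV_{\mu_i+Q_i}^{\varepsilon_i}$ with $\prod_i\varepsilon_i=+$, is a direct sum of simple current $\langle V_1\rangle$-modules by Proposition \ref{P:ADL}. Summing over $\lambda\in N_0/Q$ shows that $V_{N_0}^+$ is exactly the direct sum of all simple current $\langle V_1\rangle$-submodules of $V_N^+$.

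For (2), since $\Aut(V)$ fixes $V_1$ it preserves $\langle V_1\rangle$, so by Lemmas \ref{L:gcon} and \ref{L:gact} it permutes the isotypic components of $V$ as a $\langle V_1\rangle$-module and in particular preserves the subset of those that are simple currents; hence the sum $V_{\mathrm{sc}}$ of all simple current $\langle V_1\rangle$-submodules of $V$ is $\Aut(V)$-invariant. By (1), $V_{N_0}^+\subseteq V_{\mathrm{sc}}$, with equality precisely when $V_N(\theta)_\Z$ contains no simple current $\langle V_1\rangle$-submodule. An irreducible $\langle V_1\rangle$-submodule of $V_N(\theta)_\Z$ is, by Lemma \ref{L:decuntw}, of the form $\bigotimes_iM^i$ with each $M^i$ of twisted type, and is a simple current only if every $M^i$ is; so it suffices to find, in each of the remaining cases, an index $i$ for which $V_{Q_i}^+$ has no simple current module of twisted type. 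Using the realization of $V_{Q_i}^+$ as a simple affine VOA (Proposition \ref{P:VOAgen1}) and the correspondences of Appendix A, the total number of simple current $V_{Q_i}^+$-modules can be compared with twice the number of cosets $\mu+Q_i$ with $\mu\in Q_i^*\cap(Q_i/2)$, which is the count of untwisted-type simple currents furnished by Proposition \ref{P:ADL}; for every $A$-type component — and each of the remaining cases contains one — these two numbers agree, so all simple currents of $V_{Q_i}^+$ are of untwisted type. Therefore $V_{\mathrm{sc}}=V_{N_0}^+$, and its $\Aut(V)$-invariance proves (2).

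The main obstacle is the last step of (2): ruling out twisted-type simple current submodules of $V_N(\theta)_\Z$. This is genuinely non-uniform, since for components of type $D_{2n}$ $(n\ge2)$ or $E_7$ the VOA $V_{Q_i}^+$ does carry simple currents of twisted type; one must exploit that such components never appear alone among the relevant Niemeier lattices and always sit alongside an $A$-type component all of whose simple currents are untwisted, so that a tensor product with every factor twisted can never be a simple current. Carrying out this simple current bookkeeping for each $V_{Q_i}^+$ via its affine-VOA description is the only real computation.
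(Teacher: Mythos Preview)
Your proof is correct. For (1) you follow the same idea as the paper (Lemma~\ref{L:decuntw} together with Proposition~\ref{P:ADL}), simply spelling out the coset bookkeeping in $N_0=N\cap(Q/2)$ more explicitly.

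For (2) your route differs from the paper's. The paper splits into two cases: when $Q$ is neither $A_3^8$ nor $A_7^2D_5^2$, it invokes Theorem~\ref{L:M1} to conclude that $\Aut(V)$ already preserves all of $V_N^+$, after which (1) and Lemma~\ref{L:gcon}(3) finish immediately; for the two exceptional cases it identifies $V_{N_0}^+$ as the sum of all $\langle V_1\rangle$-submodules of $V$ whose tensor factors lie in an explicit $\Aut(\g)$-stable list of affine modules, and then appeals to Lemma~\ref{L:autoaff}. You instead argue uniformly that $V_{N_0}^+$ equals the sum $V_{\mathrm{sc}}$ of \emph{all} simple current $\langle V_1\rangle$-submodules of $V$, by observing that every one of the seven relevant root lattices contains an $A$-type component and checking from the tables in Appendix~A that for $A$-type $R$ every simple current $V_R^+$-module is of untwisted type; hence no submodule of $V_N(\theta)_\Z$ can be a simple current. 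Your approach avoids the case split and the dependence on Theorem~\ref{L:M1}, at the cost of a small uniform simple-current count; the paper's approach re-uses structural information already in hand and confines the module-level computation to the two cases where $\Aut(V)\neq C_{\Aut(V)}(z)$. Both are short and either would serve.
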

\begin{proof}
(1) follows from Lemma \ref{L:decuntw} and Proposition \ref{P:ADL}.

If $Q$ is neither $A_3^8$ nor $A_7^2D_5^2$, then by Theorem \ref{L:M1} (cf. Remark \ref{R:ext}), $\Aut(V)$ preserves $V_N^+$.
By (1), Lemmas \ref{L:gcon} (3) and \ref{L:gact}, we obtain (2).

If $Q$ is $A_3^8$ (resp. $A_7^2D_5^2$), then by Table \ref{T:A3} (resp. Tables \ref{T:A2n-1} and \ref{T:D2n+1}), $V_{N_0}^+$ is the sum of all irreducible $\langle V_1\rangle$-submodules of $V$ isomorphic to the tensor products of $L_{A_1}(2,0)$ and $L_{A_1}(2,2\Lambda_1)$. (resp.  $L_{D_4}(2,0)$, $L_{D_4}(2,2\Lambda_i)$ $(i=1,3,4)$, $L_{B_2}(1,0)$ and $L_{B_2}(1,\Lambda_1)$).
By Lemmas \ref{L:gact} and \ref{L:autoaff}, $\Aut(V)$ preserves $V_{N_0}^+$.
\end{proof}

Let $M$ be an irreducible $\langle V_1\rangle$-submodule of $V_{N_0}^+$.
Since $M$ is a simple current module (see Lemma \ref{L:N0} (1)), the multiplicity of $M$ in $V_{N_0}^+$ is one.
Moreover, for irreducible $\langle V_1\rangle$-submodules $M^1$ and $M^2$ of $V_{N_0}^+$, the fusion product $M^1\boxtimes M^2$ is an irreducible $\langle V_1\rangle$-module and it appears as a $\langle V_1\rangle$-submodule of $V_{N_0}^+$.

Let $S_i$ be the set of (the isomorphism classes of) simple current $L_{\g_i}(k_i,0)$-modules, where $k_i$ is the level of $\g_i$ in $V$.
Then $S_i$ has an abelian group structure under the fusion product.
By  \cite[Theorem 2.26]{Li01}, $S_i\cong \Z_{n+1},\Z_2,\Z_2,\Z_2^2,\Z_4,\Z_3$ or $\Z_2$ if the type of $\g_i$ is $A_n$ $(n\ge1)$, $B_n$ $(n\ge2)$, $C_n$ $(n\ge2)$, $D_{2n}$ ($n\ge2$), $D_{2n+1}$ ($n\ge2$), $E_6$ or $E_7$, respectively.
Let $S_N=\prod_{i=1}^s S_i$ be the direct product of the groups $S_i$.
We often view a simple current $\langle V_1\rangle$-module as an element of $S_N$ via the map $\bigotimes_{i=1}^sM^i\mapsto (M^1,\dots,M^s)$.

Let $\{1,2,\dots,s\}=\bigcup_{b\in B} I_b$ be the partition such that $\g_i\cong\g_j$ if and only if $i,j\in I_b$ for some $b\in B$, where $B$ is an index set.
By Lemma \ref{L:autoaff} (2) and the explicit description of $S_i$, 
the Dynkin diagram automorphism group  $\Gamma(\g_i)\subset\Aut(L_{\g_i}(k_i,0))$ acts faithfully on $S_i$.
The automorphism group $\Aut(S_N)$ of $S_N$ is defined to be $(\prod_{i=1}^s\Gamma(\g_i)):(\prod_{b\in B} \Sym_{|I_b|})$, where the symmetric group $\Sym_{|I_b|}$ acts naturally on $\prod_{i\in I_b}S_i$.

Let $C_N$ be the subgroup of $S_N$ consisting of all the irreducible $\langle V_1\rangle$-submodule of $V_{N_0}^+$.
The automorphism group $\Aut(C_N)$ of $C_N$ is defined to be the subgroup of $\Aut(S_N)$ stabilizing $C_N$.
Let $\Aut_1(C_N)=(\prod_{i=1}^s\Gamma(\g_i))\cap \Aut(C_N)$ and $\Aut_2(C_N)=\Aut(C_N)/\Aut_1(C_N)$.
Then $\Aut_1(C_N)$ is the subgroup of $\Aut(C_N)$ stabilizing every $S_i$, and $\Aut_2(C_N)$ acts faithfully on $\{S_i\mid 1\le i\le s\}$ as a permutation group.

Since $\Out(\g_i)$ acts faithfully on $S_i$, so does $\Out(V)$ on $S_N$.
Hence $\Out(V)\subset\Aut(S_N)$.
In addition, by Lemma \ref{L:N0} (2), we obtain the following:
\begin{lemma}\label{L:AutCQ}
$\Out(V)$ is a subgroup of $\Aut(C_N)$.
\end{lemma}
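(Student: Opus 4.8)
The statement to prove is Lemma \ref{L:AutCQ}: $\Out(V)$ is a subgroup of $\Aut(C_N)$.

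The plan is as follows. We already know from the discussion preceding the lemma that $\Out(\g_i)$ acts faithfully on $S_i$ (via Lemma \ref{L:autoaff} (2) and the explicit description of $S_i$ coming from \cite[Theorem 2.26]{Li01}), and hence $\Out(V)$ acts faithfully on $S_N = \prod_{i=1}^s S_i$, which gives the inclusion $\Out(V) \subset \Aut(S_N)$. So the only thing left to verify is that the image of $\Out(V)$ inside $\Aut(S_N)$ actually stabilizes the subgroup $C_N \subset S_N$, since $\Aut(C_N)$ is by definition the stabilizer of $C_N$ in $\Aut(S_N)$.

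First I would recall the definition of $C_N$: it is the subgroup of $S_N$ whose elements are (the isomorphism classes of) the irreducible $\langle V_1\rangle$-submodules of $V_{N_0}^+$, where $N_0 = N \cap (Q/2)$. Next I would take an arbitrary $g \in \Aut(V)$ and let $\bar g \in \Out(V)$ be its image. By Lemma \ref{L:N0} (2), $g$ preserves $V_{N_0}^+$. Now let $M$ be any irreducible $\langle V_1\rangle$-submodule of $V_{N_0}^+$; I want to show that the class of $g(M)$ again lies in $C_N$. Since $g$ preserves $V_{N_0}^+$ and, by Lemma \ref{L:gstab} (used earlier, together with $V_1 = (V_N^+)_1$), $g$ preserves $\langle V_1\rangle$ (indeed $g(V_N^+)=V_N^+$ in the cases where Theorem \ref{L:M1} applies, and in the two exceptional cases one uses Lemma \ref{L:N0} directly), Lemma \ref{L:gact} tells us that $g(M)$ is isomorphic to $M \circ g_{|\langle V_1\rangle}^{-1}$ as a $\langle V_1\rangle$-module, and this is again an irreducible $\langle V_1\rangle$-submodule of $V_{N_0}^+$. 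Moreover $g(M)$ is a simple current module by Lemma \ref{L:gcon} (3), consistent with Lemma \ref{L:N0} (1). Hence the isomorphism class of $g(M)$ is an element of $C_N$, i.e. the action of $\bar g$ on $S_N$ sends $C_N$ into $C_N$; applying the same to $g^{-1}$ gives that $\bar g$ stabilizes $C_N$. Therefore $\Out(V) \subset \Aut(C_N)$.

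The main point to be careful about — and the only mild obstacle — is checking that the action of $\bar g$ on $S_N$ is genuinely compatible with the ``conjugation'' action $M \mapsto M \circ g^{-1}$ on isomorphism classes of simple current $\langle V_1\rangle$-modules, i.e. that identifying $\Out(V)$ with a subgroup of $\Aut(S_N)$ as done in the paragraph before the lemma is the same as the action coming from Lemma \ref{L:gact}. This is essentially bookkeeping: the map $\bigotimes_{i=1}^s M^i \mapsto (M^1,\dots,M^s)$ intertwines the $\Out(V)$-action (which permutes the factors $S_i$ according to $\Out_2(V) \cong$ the permutation action on $\{\g_i\}$, and acts on each $S_i$ through $\Out(\g_i)$) with the module-theoretic conjugation, because $g$ permutes the ideals $\g_i$ and acts on $\langle V_1\rangle \cong \bigotimes_i L_{\g_i}(k_i,0)$ factorwise up to this permutation, so $g(M)$ decomposes as the corresponding permuted tensor product of the $M^i \circ (\text{inner/diagram part})$. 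Once this identification is in hand, the stabilization of $C_N$ is immediate from Lemma \ref{L:N0} (2) as above. No substantial new computation is required; the lemma is a formal consequence of Lemma \ref{L:N0} (2) together with the already-established faithfulness of the $S_N$-action.
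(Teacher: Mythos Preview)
Your proposal is correct and follows essentially the same route as the paper: use the faithfulness of the $\Out(\g_i)$-action on $S_i$ to embed $\Out(V)$ in $\Aut(S_N)$, and then invoke Lemma~\ref{L:N0}~(2) (together with Lemma~\ref{L:gact}) to conclude that this action stabilizes $C_N$. One small simplification: any $g\in\Aut(V)$ automatically preserves $V_1$ and hence $\langle V_1\rangle$, so the case distinction you make via Lemma~\ref{Lem:gstab} and Theorem~\ref{L:M1} to justify $g(\langle V_1\rangle)=\langle V_1\rangle$ is unnecessary.
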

\begin{remark} In general, $\Out(V)$ is not equal to $\Aut(C_N)$.
For example, if $Q= A_2^{12}$, then $\Out(V)\subsetneq\Aut(C_N)$; indeed, $N_0=N\cap(Q/2)=Q$ and $C_N$ is the set of all even weight codewords of $\prod_{i=1}^{12}S_i\cong \Z_2^{12}$.
Hence $\Aut(C_N)\cong\Sym_{12}$.
On the other hand, $\Out(V)\cong M_{12}$ by Proposition \ref{P:OV}.
\end{remark}

\begin{proposition}\label{P:O12} Let $N$ be a Niemeier lattice whose root sublattice is $A_3^8$, $A_5^4D_4$, $A_7^2D_5^2$, $A_9^2D_6$, $A_{11}D_7E_6$, $A_{15}D_9$ or $A_{17}E_7$ and let $V=V_N^{\orb(\theta)}$.
Then $\Out_i(V)=\Aut_i(C_N)$ for $i=1,2$ and $\Out(V)=\Aut(C_N)$.
In particular, the group structures of $O_1(V)$ and $O_2(V)$ are given as in Table \ref{T:Q}.
\end{proposition}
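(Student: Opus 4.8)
The plan is to prove the two inclusions $\Out_i(V)\subset\Aut_i(C_N)$ and $\Aut_i(C_N)\subset\Out_i(V)$ separately for each of the seven root sublattices, and then read off the group structures from the explicit generators. The inclusion $\Out(V)\subset\Aut(C_N)$ is already Lemma~\ref{L:AutCQ}, and since $\Out(V)$ preserves every simple ideal of $V_1$ exactly when it lands in $\prod_i\Gamma(\g_i)$, this inclusion automatically refines to $\Out_i(V)\subset\Aut_i(C_N)$ for $i=1,2$. So the real content is the reverse inclusion $\Aut(C_N)\subset\Out(V)$, i.e.\ realizing every automorphism of the glue group $C_N$ by an actual VOA automorphism of $V$.

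First I would compute $C_N$ explicitly in each case. Using $N_0=N\cap(Q/2)$ together with the generators of the glue code $N/Q$ from \cite[Section~18.4]{CS} and the module tables (Tables~\ref{T:A2}--\ref{T:E7}) of Appendix~A, I would identify which simple current $\langle V_1\rangle$-modules occur in $V_{N_0}^+$ and hence describe $C_N$ as a subgroup of $S_N=\prod_i S_i$; from $C_N$ one computes $\Aut(C_N)$ by hand as a subgroup of $\Aut(S_N)=(\prod_i\Gamma(\g_i)){:}(\prod_{b}\Sym_{|I_b|})$. Next, for the reverse inclusion, I would exhibit enough known automorphisms of $V$ to generate $\Aut(C_N)$: the lifts in $O(\hat N)/\langle\theta\rangle\subset\Aut(V_N^+)\subset\Aut(V)$ of isometries of $N$ realize (via \eqref{Exact1} and Lemma~\ref{L:gact}) the permutation part coming from $G_2(N)$ and the diagram automorphisms coming from $H(N)$; the elements $f_{\lambda_i}\in\Hom(N,\Z_2)$ and the extra automorphism $\sigma$ from Remark~\ref{R:DE} (relevant for the $A_3$ and $A_7$, $D_8$ factors, i.e.\ precisely the cases $A_3^8$ and $A_7^2D_5^2$) realize the remaining diagram automorphisms on the $D_n$- and $B_n$-type ideals. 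One then checks, using Lemma~\ref{L:autoaff} and the action of $O(\hat N)/\langle\theta\rangle$ on $\Irr(V_N^+)$ from \cite[Lemma~1.7]{Sh06}, that the images of these automorphisms in $\Aut(S_N)$ already generate $\Aut(C_N)$. Combined with $\Out(V)\subset\Aut(C_N)$ this forces equality, and taking the subgroup preserving each $S_i$ gives $\Out_1(V)=\Aut_1(C_N)$, whence $\Out_2(V)=\Aut_2(C_N)$.

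Finally I would tabulate: $\Out_1(V)$ is the product over $i$ of the diagram-automorphism contributions surviving in $\Aut_1(C_N)$ — nontrivial only when a $D_{2n}$, $D_{2n+1}$, $E_7$, or (via $A_{2n-1}$) $D_n$ ideal is present and the glue conditions allow the flip, which accounts for the $\Z_2$ entries for $A_5^4D_4$, $A_9^2D_6$, $A_{17}E_7$ and the trivial entries otherwise; and $\Out_2(V)$ is the permutation group on $\{S_i\}$ stabilizing $C_N$, which one checks coincides with the entries $\Z_2^4{:}L_4(2)$, $\Z_2^4{:}\Sym_3$, $\Sym_2\times\Sym_4$, $\Sym_2\times\Sym_2$, $\Sym_2$, $\Sym_2$, $1$ of Table~\ref{T:Q}. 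The main obstacle I anticipate is the bookkeeping in the two mixed-type cases $A_7^2D_5^2$ and $A_5^4D_4$ (and to a lesser extent $A_9^2D_6$, $A_{11}D_7E_6$, $A_{15}D_9$): there $S_N$ has several non-isomorphic $\Z_2$- and $\Z_4$- and $\Z_2^2$-factors mixed together, $\Aut(S_N)$ is a fairly large group, and one must carefully verify both that $C_N$ is the claimed subgroup and that the known automorphisms — especially the extra $\sigma$ in the $A_3^8$ and $A_7^2D_5^2$ cases, whose action interchanges untwisted and twisted type on a single $V_{A_7}^+$ or $V_{D_8}^+$ factor — hit all of $\Aut(C_N)$ rather than a proper subgroup; this is where the correspondence tables of Appendix~A and the explicit glue vectors of \cite{CS} must be used most delicately.
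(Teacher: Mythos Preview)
Your proposal is correct and follows essentially the same route as the paper: reduce via Lemma~\ref{L:AutCQ} to the inclusion $\Aut(C_N)\subset\Out(V)$, compute $C_N$ case by case from the glue code in \cite[Section~18.4]{CS} together with the tables in Appendix~A, determine $\Aut_1(C_N)$ and $\Aut_2(C_N)$ by hand, and then exhibit explicit automorphisms of $V$ (coming from $\Hom(N,\Z_2)$, from lifts of $G_2(N)$ via Lemma~\ref{L:OVL+2}, and in the two exceptional cases from the extra automorphism) realizing all of $\Aut(C_N)$.

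One small correction: the extra automorphism you need in the cases $A_3^8$ and $A_7^2D_5^2$ is the one of $V$ from Remark~\ref{R:ext} (the FLM-type automorphism arising because $N$ comes from a binary code), not the automorphism $\sigma$ of a single $V_{Q_i}^+$ from Remark~\ref{R:DE}; the latter is an automorphism of $V_{Q_i}^+$ and does not a priori extend to $V$. Also, there is no $D_8$ factor among the seven root sublattices in question --- the relevant exceptional indecomposable components are $A_3$ (for $A_3^8$) and $A_7$ (for $A_7^2D_5^2$).
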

By Propositions \ref{P:K}, \ref{P:OV} and \ref{P:O12}, we obtain Theorem \ref{T:main}, the main theorem of this article.

In the following subsections, we will prove Proposition \ref{P:O12} by a case-by-case analysis.
By Lemma \ref{L:AutCQ}, it suffices to prove $\Out_i(V)\supset\Aut_i(C_N)$ for $i=1,2$.
In order to describe $C_N$ explicitly, we denote elements of $\Z_n$ and $\Z_2\times\Z_2$ by their representatives in $\{0,1,\dots,n-1\}$ and $\{0,1,w,1+w\}$, respectively.
For the action of $\Hom(N,\Z_2)$ on $S_N$, see Lemma \ref{L:Hommod} and tables in Appendix A (cf.\ Table \ref{T:gen}).
Note that $\lambda_i$ denote the fundamental weights for indecomposable root lattices.

\subsubsection{Case $Q\cong A_3^8$}\label{S:A38}
In this case, the type of $V_1$ is $A_{1,2}^{16}$.
Then $S_N\cong\Z_2^{16}$ and $\Out_1(V)=\Aut_1(C_N)=1$.

By the description of the glue code $N/Q$ in \cite[Section 18.4, III]{CS} and Table \ref{T:A3}, $C_N$ is equivalent to the second order Reed-Muller code of length $16$ (see \cite[Chapter 13]{MS} for its definition).
It is well-known (\cite[Section 13.9]{MS}) that $\Aut_2(C_N)\cong \Z_2^4:L_4(2)$.

Let us prove $\Out(V)\supset\Aut(C_N)$.
The subgroup $\Hom(N,\Z_2)=\{f_u\mid u\in N/2N\}$ of $\Aut(V_N^+)$ acts on $\{S_i\mid 1\le i\le 16\}$ as an elementary abelian $2$-subgroup $X\subset\Sym_{16}$ of order $2^4$; indeed, $\{f_u\mid u\in (N\cap 2Q^*)/2N\}$ acts trivially on $C_N$, and $X\cong N/(N\cap 2Q^*)\cong \Z_2^4$.
Note that $X$ preserves every $V_{Q_i}^+$.
In addition, by Lemma \ref{L:OVL+2}, $\Aut(V_N^+)$ acts on $\{(V_{Q_i}^+)_1\mid 1\le i\le 8\}$ as $G_2(N)\cong \Z_2^3:L_3(2)$.
Combining these two actions, we obtain a subgroup of $\Out(V)$ of shape $\Z_2^4:(\Z_2^3:L_3(2))$, which is a maximal subgroup of $\Z_2^4:L_4(2)$.
In addition, $V$ has an extra automorphism (see Remark \ref{R:ext}) not in the maximal subgroup above.
Hence $\Out_2(V)=\Aut_2(C_N)\cong \Z_2^4:L_4(2)$.

\subsubsection{Case $Q\cong A_5^4D_4$}\label{S:A54D4}
In this case, the type of $V_1$ is $A_{3,2}^4A_{1,1}^4$.
Then $S_N\cong \Z_4^4\times\Z_2^4$.
For the explicit description of the glue code $N/Q$, see \cite[Section 18.4 XVI]{CS}.

By the generator of $N/Q$ and Tables \ref{T:A2n-1} and \ref{T:D4}, $C_N$ is generated by
\begin{align*}
(2,0,0,0,\ 1,1,1,1),\ (1,1,0,0,\ 1,1,0,0),\ (0,1,0,1,\ 0,1,0,1),\ (1,1,1,1,\ 0,0,0,0).
\end{align*}
Here the type of $\g_i$ is $A_{3,2}$ (resp. $A_{1,1}$) if $1\le i\le 4$ (resp. $5\le i\le 8$).
It is easy to see that $\Aut_1(C_N)(\cong\Z_2)$ is generated by $-1$ on $\prod_{i=1}^4S_i$ and that $\Aut_2(C_N)$ has the shape $\Z_2^4:\Sym_3$, where the normal subgroup $\Z_2^4$ is the direct product of the Klein four-subgroups of $\Sym_4$ on $\{S_i\mid 1\le i\le 4\}$ and $\{S_i\mid 5\le i\le 8\}$, and $\Sym_3$ acts diagonally on $\{S_i\mid 1\le i\le 4\}$ and $\{S_i\mid 5\le i\le 8\}$ as a subgroup of $\Sym_4$.

Let us prove that $\Out_i(V)\supset\Aut_i(C_N)$ for $i=1,2$.
For $x\in N$, we use the coordinate $x=(x_1,\dots,x_5)\in(A_5^*)^4D_4^*\cap N$.
The automorphism $f_{(\lambda_1,\lambda_1,\lambda_1,\lambda_1,0)}$ generates a subgroup of $\Out_1(V)$ of order $2$, and hence $\Out_1(V)=\Aut_1(C_N)\cong\Z_2$.
The automorphisms $f_{(0,\lambda_1,2\lambda_1,-\lambda_1,\lambda_3)}$ and $f_{(3\lambda_1,3\lambda_1,0,0,\lambda_1)}$ generates the Klein four-group of $\Sym_4$ on $\{S_i\mid 5\le i\le 8\}$.
By Lemma \ref{L:OVL+2},  $\Aut(V_N^+)$ acts on $C_N$ as a permutation group $G_2(N)\cong\Sym_4$; it acts on $\{S_i\mid 1\le i\le 4\}$ as $\Sym_4$ but it does on $\{S_i\mid 5\le i\le 8\}$ as the quotient group $\Sym_3$ of $\Sym_4$.
Thus $\Out_2(V)$ contains a subgroup of shape $\Z_2^4:\Sym_3$, and $\Out_2(V)=\Aut_2(C_N)$.

\subsubsection{Case $Q\cong A_7^2D_5^2$}\label{S:A72D52}
In this case, the type of $V_1$ is $D_{4,2}^2B_{2,1}^4$.
Then $S_N\cong (\Z_2^2)^2\times\Z_2^4$.
For the explicit description of the glue code $N/Q$, see \cite[Section 18.4 XVII]{CS}.

By the generator of $N/Q$ and Tables \ref{T:A2n-1} and \ref{T:D2n+1}, $C_N$ is generated by
\begin{align*}
(1,1,\ 0,0,0,0),\ (w,w,\ 0,0,0,0),\ (0,0,\ 1,1,1,1),\ (1,0,\ 1,1,0,0),\ (w,0,\ 1,0,1,0).\label{C:A72}
\end{align*}
Here, the type of $\g_i$ is $D_{4,2}$ (resp. $B_{2,1}$) if $1\le i\le2$ (resp. $3\le i\le 6$).
It is easy to see that $\Aut_1(C_N)=1$ and that $\Aut_2(C_N)$ is $\Sym_2\times \Sym_4$, where $\Sym_2$ and $\Sym_4$ act on $\{S_1,S_2\}$ and $\{S_i\mid 3\le i\le 6\}$ as the symmetric group, respectively.

Clearly, $\Out_1(V)=1$.
Let us prove that $\Out_2(V)\supset\Aut_2(C_N)$.
For $x\in N$, we use the coordinate $x=(x_1,\dots,x_4)\in (A_7^*)^2(D_5^*)^2\cap N$.
The automorphism $f_{(3\lambda_1,\lambda_1,\lambda_1,0)}$ (resp. $f_{(5\lambda_1,\lambda_1,0,\lambda_1)}$) act as the order $2$ permutation on $\{S_3,S_4\}$ (resp. $\{S_5,S_6\}$).
By Lemma \ref{L:OVL+2}, $\Aut(V_N^+)$ acts on $\{S_1,S_2\}$ and $\{\{S_3,S_4\},\{S_5,S_6\}\}$ as the symmetric group $G_2(N)\cong\Sym_2\times\Sym_2$.
Combining these actions, we obtain a subgroup of $\Out_2(V)$ of order $2^4$, which is a maximal subgroup of $\Aut_2(C_N)\cong \Sym_2\times \Sym_4$.
In addition, $V$ has an extra automorphism (see Remark \ref{R:ext}) not in the maximal subgroup above.
Thus we have $\Out_2(V)=\Aut_2(C_N)\cong \Sym_2\times \Sym_4$.   

\subsubsection{Case $Q\cong A_9^2D_6$}\label{S:A92D6}
In this case, the type of $V_1$ is $D_{5,2}^2A_{3,1}^2$.
Then $S_N\cong \Z_4^2\times\Z_4^2$.
For the explicit description of the glue code $N/Q$, see \cite[Section 18.4, XVIII]{CS}.

By the generator of $N/Q$ in \cite{CS} and Tables \ref{T:A2n-1} and \ref{T:D2n}, $C_N$ is generated by
\begin{align*}
(1,1,\ 2,0),\ (1,0,\ 1,1).
\end{align*}
Here, the type of $\g_i$ is $D_{5,2}$ (resp. $A_{3,1}$) if $1\le i\le2$ (resp. $3\le i\le 4$).
It is easy to see that $\Aut_1(C_N)\cong\Z_2$ is generated by $-1$ on $C_N$ and that $\Aut_2(C_N)\cong\Sym_2\times \Sym_2$.

Let us prove that $\Out_i(V)\supset\Aut_i(C_N)$ for $i=1,2$.
For $x\in N$, we use the coordinate $x=(x_1,x_2,x_3)\in (A_9^*)^2D_6^*$.
The automorphism $f_{(\lambda_5,\lambda_5,\lambda_1)}$ acts on $C_N$ by $-1$, and hence $\Out_1(V)= \Aut_1(C_N)\cong\Z_2$.
The automorphism $f_{(\lambda_1,\lambda_2,\lambda_5)}$ permutes $S_3$ and $S_4$.
In addition, $\Aut(V_N^+)$ acts on the permutation group $G_2(N)\cong \Sym_2$ on $\{S_1,S_2\}$.
Hence we have $\Out_2(V)=\Aut_2(C_N)\cong\Sym_2\times\Sym_2$.

\subsubsection{Case $Q\cong A_{11}D_7E_6$}\label{S:A11D7E6}
In this case, the type of $V_1$ is $D_{6,2}B_{3,1}^2C_{4,1}$.
Then $S_N\cong (\Z_2^2)\times(\Z_2)^2\times \Z_2$.
For the explicit description of the glue code $N/Q$, see \cite[Section 18.4, XXIII]{CS}.

By the generator of $N/Q$ and Tables \ref{T:A2n-1}, \ref{T:D2n+1} and \ref{T:E6}, $C_N$ is generated by
\begin{align*}
(1,\ 1,1,\ 0),\ (1,\ 0,0,\ 1),\ (w,\ 1,0,\ 0).
\end{align*} 
Here, the types of $\g_1$, $\g_2$, $\g_3$, $\g_4$ are $D_{6,2}$, $B_{3,1}$, $B_{3,1}$ and $C_{4,1}$, respectively.
It is easy to see that $\Aut_1(C_N)=1$ and that $\Aut_2(C_N)\cong\Sym_2$.
Hence $\Out_1(V)=1$.
The automorphism $f_{(\lambda_1,\lambda_1,\lambda_1)}$ permutes $S_2$ and $S_3$, where $(\lambda_1,\lambda_1,\lambda_1)\in A_{11}^*D_7^*E_6^*\cap N$.
Thus we have $\Out_2(V)=\Aut_2(C_N)\cong\Sym_2$.

\subsubsection{Case $Q\cong A_{15}D_9$}\label{S:A15D9}
In this case, the type of $V_1$ is $D_{8,2}B_{4,1}^2$.
Then $S_N\cong (\Z_2^2)\times(\Z_2)^2$.
For the explicit description of the glue code $N/Q$, see \cite[Section 18.4, XIX]{CS}.

By the generator of $N/Q$ and Tables \ref{T:A2n-1} and \ref{T:D2n+1}, $C_N$ is generated by
\begin{align*}
(1,\ 1,1),\ (w,\ 0,0).
\end{align*} 
Here, the types of $\g_1,\g_2,\g_3$ are $D_{8,2}$, $B_{4,1}$, $B_{4,1}$, respectively.
It is easy to see that $\Aut_1(C_N)=1$ and that $\Aut_2(C_N)\cong\Sym_2$.
Hence $\Out_1(V)=1$.
The automorphism $f_{(\lambda_2,\lambda_1)}$ permutes $S_2$ and $S_3$, where $(\lambda_2,\lambda_1)\in A_{15}^*D_9^*\cap N$.
Thus we have $\Out_2(V)=\Aut_2(C_N)\cong\Sym_2$.

\subsubsection{Case $Q\cong A_{17}E_7$}\label{S:A17E7}
In this case, the type of $V_1$ is $D_{9,2}A_{7,1}$.
Then $S_N\cong\Z_4\times\Z_8$ and $\Out_2(V)=\Aut_2(C_N)=1$.
For the explicit description of the glue code $N/Q$, see \cite[Section 18.4, XXII]{CS}.

By the generator of $N/Q$ and Tables \ref{T:A2n-1} and \ref{T:E7}, $C_N$ is generated by
\begin{align*}
(1,\ 2).
\end{align*} 
Here, the types of $\g_1$ and $\g_2$ are $D_{9,2}$ and $A_{7,1}$, respectively.
It is easy to see that $\Aut(C_N)=\Aut_1(C_N)\cong\Z_2$ and it is generated by $-1$ on $C_N$.
The automorphism $f_{(\lambda_3,\lambda_1)}$ generates $\Aut_1(C_N)$, where $(\lambda_3,\lambda_1)\in A_{17}^*E_7^*\cap N$.
Hence $\Out_1(V)=\Aut_2(C_N)\cong\Z_2$.

\begin{remark} The subgroup $C_N$ of $S_N$ is called ``Glue" in \cite[Table 1]{Sc93}.
\end{remark}

Combining Theorem \ref{L:M1}, Remark \ref{R:ext}, Corollary \ref{C:KV} and the arguments in Sections \ref{S:A38} and \ref{S:A72D52}, we obtain the following corollary:

\begin{corollary}\label{C:AutV} Let $N$ be a Niemeier lattice with root sublattice $Q$ and let $V=V_N^{\orb(\theta)}$.
\begin{enumerate}[{\rm (1)}]
\item If $Q\cong A_2^{12}$, $A_4^6$, $A_5^4D_4$, $A_6^4$, $A_8^3$, $A_9^2D_6$, $E_6^4$, $A_{11}D_7E_6$, $A_{12}^2$, $A_{15}D_9$, $A_{17}E_7$ or $A_{24}$, then $z$ is a central element of $\Aut(V)$ and $\Aut(V)/\langle z\rangle\cong\Aut(V_N^+)$.
\item If $Q\cong A_3^8$ or $A_7^2D_5^2$, then $\Aut(V)$ is generated by $C_{\Aut(V)}(z)$ and an extra automorphism in \cite{FLM}.
\end{enumerate}
\end{corollary}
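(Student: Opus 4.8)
The plan is to treat the two items separately, in each case reducing the determination of $\Aut(V)$ to results already established.

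For item (1), I would first note that the twelve root sublattices listed are precisely the fourteen lattices of Lemma~\ref{L:-1G0} with $A_3^8$ and $A_7^2D_5^2$ removed, so by Remark~\ref{R:ext} each of them also satisfies condition (III) of Section~5 (conditions (I) and (II) holding as observed at the start of this section). Theorem~\ref{L:M1} then applies directly and yields $\Aut(V)=C_{\Aut(V)}(z)$ together with $\Aut(V)/\langle z\rangle\cong\Aut(V_N^+)$. Since $\Aut(V)=C_{\Aut(V)}(z)$ means every automorphism of $V$ commutes with $z$, the element $z$ is central; and $\langle z\rangle\cong\Z_2$ because $z$ has order $2$. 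So item (1) is essentially immediate.

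For item (2) I would begin by recording two facts that place the relevant known automorphisms inside $C_{\Aut(V)}(z)$. First, $\Inn(V)\subset C_{\Aut(V)}(z)$: if $v\in V_1\subset V_N^+$ then $zv=v$, hence $z v_{(0)} z^{-1}=(zv)_{(0)}=v_{(0)}$, so $\exp(v_{(0)})$ commutes with $z$. Second, Corollary~\ref{C:KV} gives $K(V)\subset\Inn(V)$, and the lifts of $\Hom(N,\Z_2)$ and of the permutations realizing $G_2(N)$ lie in $O(\hat N)/\langle\theta\rangle\cong C_{\Aut(V)}(z)/\langle z\rangle$. Next I would use that $\Aut(V)$ is generated by $K(V)\Inn(V)=\Inn(V)$ together with any choice of preimages of a generating set of $\Out(V)$, and invoke the case analyses in Sections~\ref{S:A38} and~\ref{S:A72D52}: there $\Out(V)$ is shown to be generated by the images of the lifts of $\Hom(N,\Z_2)$ and $G_2(N)$, which form a maximal subgroup of $\Out(V)$, together with the image of the extra automorphism $\sigma$ of \cite{FLM} from Remark~\ref{R:ext}. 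Choosing the first set of generators inside $C_{\Aut(V)}(z)$, I would conclude $\Aut(V)=\langle C_{\Aut(V)}(z),\sigma\rangle$.

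I expect the only genuine content of this corollary to be this assembly; the real work is upstream, in Sections~\ref{S:A38} and~\ref{S:A72D52}, where one verifies that the portion of $\Out(V)$ visible inside $C_{\Aut(V)}(z)$ is a maximal subgroup and that $\sigma$---which by Remark~\ref{R:ext} does not centralise $z$---projects outside it, so that the two together exhaust $\Out(V)$. The main point to watch is bookkeeping: keeping straight which known automorphisms of $V$ preserve $V_N^+$, hence lie in $C_{\Aut(V)}(z)$, and which do not.
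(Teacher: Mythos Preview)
Your proposal is correct and follows exactly the route the paper intends: the corollary is stated in the paper with only the one-line justification ``Combining Theorem~\ref{L:M1}, Remark~\ref{R:ext}, Corollary~\ref{C:KV} and the arguments in Sections~\ref{S:A38} and~\ref{S:A72D52}'', and you have unpacked precisely this combination. One small slip: you write $O(\hat N)/\langle\theta\rangle\cong C_{\Aut(V)}(z)/\langle z\rangle$, but this should be $\subset$ (via Lemma~\ref{Lem:gstab}~(1) and the inclusion $O(\hat N)/\langle\theta\rangle\subset\Aut(V_N^+)$); the argument only needs the inclusion, so nothing is affected.
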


\appendix\section{Correspondence between $\Irr(V_R^+)$ and $\Irr(L_{\g}(k_R,0))$}
Let $R$ be an indecomposable root lattice such that $R\not\cong A_1$.
In this appendix, we describe in Tables \ref{T:A2} to
\ref{T:E8} the correspondences between $\Irr(V_R^+)$ and $\Irr(L_\g(k_R,0))$ via the isomorphism $V_R^+\cong L_\g(k_R,0)$ in Proposition \ref{P:VOAgen1} (see also Table \ref{T:gQ}), where $\g=(V_R^+)_1$.
For the notations of irreducible $V_R^+$-modules, see \eqref{Eq:Not}.
For a dominant integral weight $\Lambda$ of a simple Lie algebra $\s$ of level $k$, we denote by $[\Lambda]$ the irreducible $L_\s(k_R,0)$-module $L_\s(k_R,\Lambda)$.
Here we adopt the labeling in \cite[Section 11.4]{Hu} of simple roots and the associated fundamental weights for both $R$ and $\s$.
In these tables, $\lambda_i$ and $\Lambda_i$ mean the fundamental weights for $R$ and for $\s$, respectively.
We omit the detail of central characters $\chi$ for irreducible $V_R^+$-modules of twisted type.

\begin{footnotesize}

\begin{table}[ht]
\caption{Case $R=A_2$} \label{T:A2}
\begin{tabular}{|c|c|}
\hline
$\Irr(V_{A_2}^+)$&$\Irr(L_{A_1}(4,0))$\\ \hline
$(0)^\pm$&$[0]$, $[4\Lambda_1]$\\\hline
$(\lambda_1)$&$[2\Lambda_1]$\\\hline
$(\chi)^\pm$&$[\Lambda_1]$, $[3\Lambda_1]$\\\hline
\end{tabular}
\end{table}

\begin{table}[ht]
\caption{Case $R=A_{2n}$ $(n\ge2)$} \label{T:A2n}
\begin{tabular}{|c|c|}
\hline
$\Irr(V_{A_{2n}}^+)$&$\Irr(L_{B_{n}}(2,0))$\\ \hline
$(0)^\pm$&$[0]$, $[2\Lambda_1]$\\\hline
$(\lambda_i)$ $(1\le i\le n-1)$ &$[\Lambda_i]$ $(1\le i\le n-1)$\\\hline
$(\lambda_n)$&$[2\Lambda_n]$\\\hline
$(\chi)^\pm$&$[\Lambda_n]$, $[\Lambda_1+\Lambda_n]$\\\hline
\end{tabular}
\end{table}

\begin{table}[ht]
\caption{Case $R=A_{3}$} \label{T:A3}
\begin{tabular}{|c|c|}
\hline
$\Irr(V_{A_{3}}^+)$&$\Irr(L_{A_{1}}(2,0)^{\otimes2})$\\ \hline
$(0)^\pm$&$[0]\otimes[0]$, $[2\Lambda_1]\otimes[2\Lambda_1]$\\\hline
$(\lambda_{2})^\pm$&$[2\Lambda_{1}]\otimes[0]$, $[0]\otimes [2\Lambda_1]$\\\hline
$(\lambda_1)$ &$[\Lambda_1]\otimes[\Lambda_1]$\\\hline
$(\chi_i)^\pm$ $(i=1,2)$&$[\Lambda_{1}]\otimes[c\Lambda_1]$, $(c\Lambda_1)\otimes[\Lambda_1]$ $(c\in\{0,2\})$\\\hline
\end{tabular}
\end{table}

\begin{table}[ht]
\caption{Case $R=A_{5}$} \label{T:A5}
\begin{tabular}{|c|c|}
\hline
$\Irr(V_{A_{5}}^+)$&$\Irr(L_{A_{3}}(2,0))$\\ \hline
$(0)^\pm$&$[0]$, $[2\Lambda_2]$\\\hline
$(\lambda_{3})^\pm$&$[2\Lambda_{1}]$, $[2\Lambda_3]$\\\hline
$(\lambda_1)$ &$[\Lambda_2]$\\\hline
$(\lambda_{2})$&$[\Lambda_{1}+\Lambda_3]$\\\hline
$(\chi_i)^\pm$ $(i\in\{1,2\})$&$[\Lambda_{1}]$, $[\Lambda_3]$, $[\Lambda_1+\Lambda_{2}]$, $[\Lambda_2+\Lambda_3]$\\\hline
\end{tabular}
\end{table}

\begin{table}[ht]
\caption{Case $R=A_{2n-1}$ $(n\ge4)$} \label{T:A2n-1}
\begin{tabular}{|c|c|}
\hline
$\Irr(V_{A_{2n-1}}^+)$&$\Irr(L_{D_{n}}(2,0))$\\ \hline
$(0)^\pm$&$[0]$, $[2\Lambda_1]$\\\hline
$(\lambda_{n})^\pm$&$[2\Lambda_{n-1}]$, $[2\Lambda_n]$\\\hline
$(\lambda_i)$ $(1\le i\le n-2)$ &$[\Lambda_i]$ $(1\le i\le n-2)$\\\hline
$(\lambda_{n-1})$&$[\Lambda_{n-1}+\Lambda_n]$\\\hline
$(\chi_i)^\pm$ $(i\in\{1,2\})$&$[\Lambda_{n-1}]$, $[\Lambda_n]$, $[\Lambda_1+\Lambda_{n-1}]$, $[\Lambda_1+\Lambda_n]$\\\hline
\end{tabular}
\end{table}

\begin{table}[ht]
\caption{Case $R=D_{4}$} \label{T:D4}
\begin{tabular}{|c|c|}
\hline
$\Irr(V_{D_{4}}^+)$&$\Irr(L_{A_{1}}(1,0)^{\otimes4})$\\ \hline
$(0)^\pm$&$[0]^{\otimes4}$, $[\Lambda_1]^{\otimes4}$\\\hline
$(\lambda_{i})^\pm$&$\sigma([\Lambda_{1}]^{\otimes2}\otimes[0]^{\otimes2})$, ($\sigma\in\Sym_4$)\\\hline
$(\chi_i)^\pm$ $(i\in\{1,2,3,4\})$&$\sigma([\Lambda_{1}]\otimes[0]^{\otimes3})$,\ $\sigma([\Lambda_1]^{\otimes3}\otimes[0])$, $(\sigma\in\Sym_4)$\\\hline
\end{tabular}
\end{table}

\begin{table}[ht]
\caption{Case $R=D_{6}$} \label{T:D6}
\begin{tabular}{|c|c|}
\hline
$\Irr(V_{D_{6}}^+)$&$\Irr(L_{A_{3}}(1,0)^{\otimes2})$\\ \hline
$(0)^\pm$&$[0]\otimes[0]$, $[\Lambda_2]\otimes[\Lambda_2]$\\\hline
$(\lambda_{1})^\pm$&$[\Lambda_{2}]\otimes[0]$, $[0]\otimes[\Lambda_2]$\\\hline
$\{(\lambda_{5})^\pm\},\ \{(\lambda_{6})^\pm\}$ &$\{[\Lambda_{i}]\otimes[\Lambda_i]\mid i\in\{1,3\}\}$, $\{[\Lambda_{i}]\otimes[\Lambda_j]\mid \{i,j\}=\{1,3\}\}$\\\hline
$(\chi_i)^\pm$ $(i\in\{1,2,3,4\})$&$[c\Lambda_{2}]\otimes[\Lambda_i]$,\ $[\Lambda_i]\otimes[c\Lambda_2]$, $(i\in\{1,3\}$, $c\in\{0,1\}$)\\\hline
\end{tabular}
\end{table}

\begin{table}[ht]
\caption{Case $R=D_{2n}$ $(n\ge4)$} \label{T:D2n}
\begin{tabular}{|c|c|}
\hline
$\Irr(V_{D_{2n}}^+)$&$\Irr(L_{D_{n}}(1,0)^{\otimes2})$\\ \hline
$(0)^\pm$&$[0]\otimes[0]$, $[\Lambda_1]\otimes[\Lambda_1]$\\\hline
$(\lambda_{1})^\pm$&$[\Lambda_{1}]\otimes[0]$, $[0]\otimes[\Lambda_1]$\\\hline
$\{(\lambda_{2n-1})^\pm\},\ \{(\lambda_{2n})^\pm\}$ &
$\{[\Lambda_{i}]\otimes[\Lambda_i]\mid i\in\{n-1,n\}\}$, $\{[\Lambda_{i}]\otimes[\Lambda_j]\mid \{i,j\}=\{n-1,n\}\}$\\\hline
$(\chi_i)^\pm$ $(i\in\{1,2,3,4\})$&$[c\Lambda_{1}]\otimes[\Lambda_i]$,\ $[\Lambda_i]\otimes[c\Lambda_1]$, $(i\in\{n-1,n\}$, $c\in\{0,1\}$)\\\hline
\end{tabular}
\end{table}

\begin{table}[bht]
\caption{Case $R=D_{2n+1}$ $(n\ge2)$} \label{T:D2n+1}
\begin{tabular}{|c|c|}
\hline
$\Irr(V_{D_{2n+1}}^+)$&$\Irr(L_{B_{n}}(1,0)^{\otimes2})$\\ \hline
$(0)^\pm$&$[0]\otimes[0]$, $[\Lambda_1]\otimes[\Lambda_1]$\\\hline
$(\lambda_{1})^\pm$&$[\Lambda_{1}]\otimes[0]$, $[0]\otimes[\Lambda_1]$\\\hline
$(\lambda_{2n-1})$ &$[\Lambda_{n}]\otimes[\Lambda_n]$\\\hline
$(\chi)^\pm$&$[\Lambda_{1}]\otimes[\Lambda_n]$,\ $[\Lambda_n]\otimes[\Lambda_1]$\\\hline
\end{tabular}
\end{table}

\begin{table}[bht]
\caption{Case $R=E_{6}$} \label{T:E6}
\begin{tabular}{|c|c|}
\hline
$\Irr(V_{E_{6}}^+)$&$\Irr(L_{C_{4}}(1,0))$\\ \hline
$(0)^\pm$&$[0]$, $[\Lambda_4]$\\\hline
$(\lambda_{1})$&$[\Lambda_{2}]$\\\hline
$(\chi)^\pm$&$[\Lambda_{1}]$, $[\Lambda_3]$\\\hline
\end{tabular}
\end{table}

\begin{table}[bht]
\caption{Case $R=E_{7}$} \label{T:E7}
\begin{tabular}{|c|c|}
\hline
$\Irr(V_{E_{7}}^+)$&$\Irr(L_{A_7}(1,0))$\\ \hline
$(0)^\pm$&$[0]$, $[\Lambda_4]$\\\hline
$(\lambda_{1})^\pm$&$[\Lambda_{2}]$, $[\Lambda_6]$\\\hline
$(\chi_i)^\pm$ $(i\in\{1,2\})$ &$[\Lambda_{j}]$, $(j\in\{1,3,5,7\})$\\\hline
\end{tabular}
\end{table}

\begin{table}[bht]
\caption{Case $R=E_{8}$} \label{T:E8}
\begin{tabular}{|c|c|}
\hline
$\Irr(V_{E_{8}}^+)$&$\Irr(L_{D_8}(1,0))$\\ \hline
$(0)^\pm$&$[0]$, $[\Lambda_{7}]$\\\hline
$(\chi)^\pm$ &$[\Lambda_{1}]$,\ $[\Lambda_8]$\\\hline
\end{tabular}
\end{table}

\end{footnotesize}

\end{document}